\newcommand{\bea}{\begin{eqnarray*}}
\newcommand{\eea}{\end{eqnarray*}}
\newcommand{\bm}{\begin{pmatrix}}
\newcommand{\fm}{\end{pmatrix}}
\newcommand\Z{\mathbb Z}
\newcommand\R{\mathbb R}
\newcommand{\ra}{\rightarrow}
\newcommand{\g}{\gamma}
\newcommand{\G}{\Gamma}
\newtheorem{theorem}{Theorem}[section]
\newtheorem{Prop}[theorem]{Proposition}
\newtheorem{lemma}[theorem]{Lemma}
\newtheorem{remark}[theorem]{Remark}
\newtheorem{corollary}[theorem]{Corollary}
\newtheorem{definition}[theorem]{Definition}
\title{A fibered Tukia theorem for nilpotent Lie groups}
\author{Tullia Dymarz}
\address{Department of Mathematics,
University of Wisconsin-Madison, 480 Lincoln Drive,  Madison, WI 53706} \email{dymarz@math.wisc.edu}
\author{David Fisher}
\address{Department of Mathematics,
 Rice  University,  6100 Main Street
Houston, TX 77005}
   \email{davidfisher@rice.edu}
\author{Xiangdong Xie}
\address{Department of Mathematics and Statistics,
Bowling Green State University,
Bowling Green, OH 43403} \email{xiex@bgsu.edu}
\thanks{}
\keywords{uniform quasiconformal groups,   uniform quasisimilarity groups,  nilpotent Lie groups}
\begin{document}
\maketitle

\begin{abstract}
We establish a Tukia-type  theorem for uniform quasiconformal groups of  a Carnot group. More generally we establish a fiber bundle version (or foliated version)  of Tukia theorem for
uniform quasiconformal groups of  a nilpotent Lie group   whose Lie algebra admits a diagonalizable derivation with positive eigenvalues. These results have applications to quasi-isometric rigidity of solvable groups \cite{DFX}.

\end{abstract}

\section{Introduction}

In this paper we study uniform quasiconformal groups of simply connected nilpotent Lie groups.  The nilpotent Lie groups considered in this paper
are those  whose Lie algebras admit a diagonalizable derivation with positive eigenvalues.   We start  with the special case of Carnot groups.

  Let $N$ be  a Carnot group
    equipped with a left invariant Carnot-Caratheodory metric $d_{CC}$.
    Let $\hat N=N\cup \{\infty\}$ be the one-point compactification of $N$.
     A homeomorphism  $f: \hat N \ra \hat N$ is $K$-quasiconformal for some $K\ge 1$ if
      $f: (N\backslash\{f^{-1}(\infty)\}, d_{CC})  \ra (N\backslash \{f(\infty)\}, d_{CC})$      is $K$-quasiconformal.
        A $1$-quasiconformal map is also called conformal.
          A group $G$ of homeomorphisms of $\hat N$ is  \emph{uniformly quasiconformal}
              if there is some $K\ge 1$ such that every $g\in G$ is $K$-quasiconformal.
    If $G'$ is a conformal group of $\hat N$ and $f$ is a self quasiconformal map of $\hat N$, then $fG'f^{-1}$ is a uniform quasiconformal group of $\hat N$. A natural question is when
     a uniform quasiconformal group of $\hat N$ arises this way.

    Let $G$ be a group of homeomorphisms of $\hat N$.  Then $G$ also acts  diagonally  on the space of distinct triples
     $$T(\hat N)=\left\{(x,y,z):   x,y, z\in \hat N, x\not=y, y\not=z, z\not=x\right\}$$
    of $\hat N$.   Our first result is the following.

   \begin{theorem}\label{tukia}
    Let $N$ be  a Carnot group.     There is    a left invariant Carnot-Caratheodory metric $d_0$ on $N$ with the following property.
    Let $G$ be a 
      uniform  quasiconformal group of  $\hat N$.
     If the action of $G$ on the space of distinct triples of $\hat N$ is co-compact, then
       there is some quasiconformal map $f: \hat N \ra \hat N$
         such that $fGf^{-1}$    consists of conformal  maps  with respect to $d_0$.
   \end{theorem}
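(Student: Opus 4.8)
\emph{Strategy.} The plan is to adapt the proof of Tukia's theorem to the sub-Riemannian setting, replacing ordinary differentiation by Pansu differentiation and the Euclidean space of conformal structures by an associated nonpositively curved symmetric space.

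\textbf{Setup and Pansu differentiation.} Each $K$-quasiconformal self-map $g$ of $\hat N$ is Pansu differentiable at almost every point away from $g^{-1}(\infty)$, with Pansu differential $D_Pg(x)$ a graded automorphism of the Lie algebra $\mathfrak n=V_1\oplus\cdots\oplus V_r$. Since $V_1$ generates $\mathfrak n$, such an automorphism is determined by its restriction to $V_1$, so $D_Pg(x)$ may be regarded as an element of $GL(V_1)$, and the metric definition of $K$-quasiconformality bounds its eccentricity (the ratio of largest to smallest singular value relative to the fixed inner product on $V_1$) by a constant $H=H(K)$ for a.e.\ $x$. Let $\mathcal X$ be the space of inner products on $V_1$ of determinant one; it is a symmetric space of noncompact type, hence a complete CAT(0) space, on which $\mathrm{Aut}_{gr}(\mathfrak n)\le GL(V_1)$ acts by isometries (scalars acting trivially), and bounded eccentricity of $A\in\mathrm{Aut}_{gr}(\mathfrak n)$ is exactly the condition that $A$ displaces a fixed base point $o\in\mathcal X$ a bounded distance.

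\textbf{An invariant measurable conformal structure.} For $g\in G$ consider the measurable section $x\mapsto D_Pg(x)^{-1}\cdot o$ of the trivial $\mathcal X$-bundle over $N$; by the previous step all these sections take values in the closed ball $\bar B(o,H)$. Following the Sullivan--Tukia averaging scheme, define $\mu(x)$ to be the circumcenter of the closure of $\{D_Pg(x)^{-1}\cdot o:g\in G\}\subseteq\bar B(o,H)$; this exists and is unique because $\mathcal X$ is CAT(0), and $\mu$ is measurable because $(g,x)\mapsto D_Pg(x)$ is Borel (a uniformly quasiconformal group being a locally compact convergence group) and the circumcenter depends measurably on the set. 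The chain rule $D_P(hg)(x)=D_Ph(gx)\circ D_Pg(x)$ --- valid almost everywhere since quasiconformal maps send null sets to null sets --- together with the fact that $\mathrm{Aut}_{gr}(\mathfrak n)$ acts on $\mathcal X$ isometrically and hence preserves circumcenters, yields the equivariance $\mu(hx)=D_Ph(x)\cdot\mu(x)$ for every $h\in G$ and a.e.\ $x$. Thus $\mu$ is a bounded, $G$-invariant, measurable field of conformal structures on $N$.

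\textbf{Regularization using cocompactness on triples, and the metric $d_0$.} This is where the cocompact action on triples enters essentially, there being no higher-dimensional measurable Riemann mapping theorem to call upon. Fixing a compact cross-section $Q\subseteq T(\hat N)$ with $GQ=T(\hat N)$ and invoking the quasi-Möbius estimates satisfied by uniformly quasiconformal maps, one first shows the action of $G$ is uniformly conical: for every $x\in N$ and every $r>0$ there is $g\in G$ taking $B_{d_{CC}}(x,r)$ to a subset of $\hat N$ of controlled shape and unit size, with distortion bounded independently of $x$ and $r$; crucially, because $G$ is \emph{uniformly} quasiconformal, every Pansu differential arising here has eccentricity at most $H$ rather than a power of $H$. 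Feeding these zooming elements into the equivariance relation at a Lebesgue density point of $\mu$ --- a point at which the rescalings of $\mu$ converge to the constant value $\mu(x)$ --- forces $\mu$ to equal a single conformal structure $\mu_\infty\in\mathcal X$ almost everywhere; equivalently $D_Pg(x)$ lies in $\mathrm{Aut}_{gr}(\mathfrak n)\cap\mathrm{CO}_{\mu_\infty}(V_1)$ for a.e.\ $x$ and every $g\in G$, where $\mathrm{CO}_{\mu_\infty}(V_1)$ is the conformal linear group of the inner product $\mu_\infty$. A separate lemma, using once more that the resulting conformal group still acts cocompactly on triples, identifies the possible values of $\mu_\infty$ up to graded automorphism with a single conformal structure; we let $d_0$ be the corresponding left invariant Carnot--Carathéodory metric on $N$, which depends only on $N$.

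\textbf{Conclusion.} Choose a graded automorphism $\phi$ of $N$ carrying $\mu_\infty$ to the conformal structure of $d_0$, and put $f=\phi$ (if the regularization above produces only a continuous rather than constant $\mu$, instead take $f$ to be a quasiconformal map transporting $\mu$ to the conformal structure of $d_0$). Then every $h\in fGf^{-1}$ is uniformly quasiconformal and, by the previous step, has Pansu differential almost everywhere in $\mathrm{Aut}_{gr}(\mathfrak n)\cap\mathrm{CO}(V_1)$ relative to $d_0$; in particular $h$ has metric dilatation $1$ almost everywhere. A Liouville-type rigidity theorem for quasiconformal maps of Carnot groups --- a quasiconformal map whose Pansu differential is almost everywhere conformal is conformal --- then shows that each such $h$ is conformal with respect to $d_0$, which proves Theorem~\ref{tukia}. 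I expect the regularization step to be by far the hardest part: the preceding and following steps rest on deep but established facts about quasiconformal maps of Carnot groups, and the averaging step is routine once the CAT(0) target is in hand.
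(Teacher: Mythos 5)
Your setup, the construction of the $G$-invariant measurable conformal structure $\mu$ via circumcenters in the CAT(0) symmetric space $\mathcal X=SL(V_1)/SO(V_1)$, and the final appeal to a Liouville-type rigidity statement all match the paper (Proposition \ref{existence}, Lemma \ref{capacity1qc}, Lemma \ref{inmeasure}). But the regularization step, which you correctly flag as the heart of the matter, is wrong as stated. You claim that zooming at a Lebesgue density point of $\mu$ ``forces $\mu$ to equal a single conformal structure $\mu_\infty$ almost everywhere.'' If that were true, then $G$ would already act by $\mu_\infty$-conformal maps, hence (after a graded-automorphism change of inner product on $V_1$) by honest conformal maps with respect to some Carnot--Carath\'eodory metric, with no conjugating quasiconformal map needed at all. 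This is manifestly false: conjugating a conformal group by a generic quasiconformal map produces a uniformly quasiconformal group with nonconstant $\mu$, yet still acting cocompactly on triples. Your parenthetical escape (``take $f$ to be a quasiconformal map transporting $\mu$ to the conformal structure of $d_0$'' if $\mu$ is merely continuous) invokes exactly the measurable Riemann mapping theorem you rightly noted is unavailable in this setting, so it cannot rescue the argument.

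What actually happens (Theorem \ref{radial}) is subtler: one fixes a density/approximate-continuity point $p$ of $\mu$ (a.e.\ point works by Federer), uses cocompactness on triples to make $p$ a radial limit point, and forms the zooming maps $f_j=\delta_{s_j}\circ h_j$ with $h_j\in G$; these subconverge uniformly to a quasiconformal $f$, which is the conjugator. One then shows, for each $g\in G$, that the approximants $\tilde g_j=f_jgf_j^{-1}$ have pointwise dilatation $K(\tilde g_j,\cdot)\to 1$ \emph{in measure} on compacta (this uses $\mu_j=(f_j^{-1})^*\mu=(\delta_{s_j}^{-1})^*\mu\to\mu(p)$ near density points, together with the reverse H\"older estimates of Lemmas \ref{uniformdensity}--\ref{holder} to control the exceptional sets). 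Crucially, this does \emph{not} directly give that the limit $\tilde g=fgf^{-1}$ has Pansu differential a.e.\ conformal; instead one passes to the limit through the ring-capacity machinery (Lemma \ref{inmeasure}, feeding into Lemma \ref{capacity1qc}) to conclude $\tilde g$ is conformal. So it is the conjugated group, not $\mu$ itself, that becomes conformal, and the passage requires capacity estimates rather than mere a.e.\ convergence of differentials. Your closing lemma identifying the various $\mu_\infty$ up to graded automorphism does correspond to the paper's maximally symmetric metric $d_0$ (Lemma \ref{maximal}, combined with Cowling--Ottazzi's classification of conformal maps in \cite{CO15}).
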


         The metric $d_0$ has the largest conformal group  in the sense that  the conformal group of any  left invariant Carnot-Caratheodory metric is conjugated into the conformal group of $d_0$, see  Definition \ref{max-def}     and  Lemma \ref{maximal}.  In general it is not possible to conjugate a uniform quasiconformal group into the conformal group of an arbitrary  left invariant Carnot-Caratheodory metric, see   Section \ref{example}  for an example.

    Theorem \ref{tukia} was first  established by Tukia  \cite{T86} for $N=\R^n$ ($n\ge 2$) and  was  later generalized  by Chow \cite{Ch96}
        to the case
     when $N$ is an  Heisenberg group.    Before Tukia's result,
       D.  Sullivan \cite{S78}  proved that, when $n=2$   every uniform quasiconformal  group (without the assumption of cocompactness of the induced action on the space of distinct triples)  is
        quasiconformally    conjugate to a conformal
group.

     Tukia's theorem   has
       applications  to  rigidity of quasi-actions and
         quasi-isometric rigidity of finitely generated groups.
           So does Theorem \ref{tukia}.  A \emph{quasi-action} of a group $\Gamma$ on a metric space $X$ is an assignment $ \gamma \mapsto G_\gamma$ where $G_\gamma$ is a self quasi-isometry of $X$ such that
\begin{enumerate}
\item  $G_\gamma$ is an $(L,A)$ quasi-isometry where $L$ and $A$ are uniform over all $\gamma \in \Gamma$.
\item  $G_{\gamma\eta}$ and  $G_\gamma \circ G_\eta$ are  bounded distance apart in the sup norm, uniformly over all $\gamma, \eta \in \Gamma$.
\item $G_{Id}$ is bounded distance from the identity map on $X$.
\end{enumerate}
A quasi-action is \emph{cobounded} if there is a bounded set $S \subset X$ such that for any $x \in X$ there is $\gamma \in \Gamma$ such that $ G_\gamma(x) \in S$.

The standard example of a cobounded quasi-action arises when $\Gamma$ is a  group with   a left invariant  metric (for example, a  finitely generated group  with a word metric or a Lie group with a left invariant Riemannian metric) and $\phi: \Gamma \to X$ is a quasi-isometry with coarse inverse $\bar{\phi}$. Then $\gamma \mapsto
\phi\circ L_\gamma \circ \bar{\phi}$   defines a cobounded quasi-action   of $\Gamma$ on $X$, where $L_\gamma$ is the left translation of $\Gamma$ by $\gamma$.
 We note, however, that there  exist ``non-proper'' cobounded  quasi-actions and so these do not come from a quasi-isometry between a group and a metric space.

      A quasi-action  $\{G_\gamma|\gamma\in \G\}$  of $\G$ on a metric space $X$ is quasi-conjugate to a quasi-action   $\{G'_\gamma|\gamma\in \G\}$
        of $\G$ on another metric space $X'$ if there is a quasi-isometry
      $f: X\ra X'$  and a constant $C>0$   such that
       $d(G'_\gamma(f(x)), f(G_\gamma(x)))\le C$   for all $x\in X$  and all $\gamma\in \G$.

    The relation between   quasi-actions and uniform quasiconformal groups is through negative curvature.   A  self quasi-isometry of a  Gromov hyperbolic space $X$   induces a
      self quasiconformal map of the Gromov boundary  $\partial X$ of $X$ (equipped with   a  visual metric),
        and a  quasi-action of a group $\G$ on a Gromov hyperbolic space $X$
         induces a uniform quasiconformal group action of $\G$  on $\partial X$.
           A quasi-conjugate  between quasi-actions of $\G$ on two Gromov hyperbolic spaces $X$, $X'$     corresponds to  a quasiconformal  conjugation between  the   uniform quasiconformal actions of  $\G$ on $\partial X$   and   $\partial X'$.

  The standard Carnot group dilations of $N$ define an action of $\mathbb R$ on $N$.  Let
   $S=N\rtimes \mathbb R$ be the associated solvable Lie group.  Then $S$
   with  any left invariant Riemannian
      metric   is a Gromov hyperbolic space
      \cite{H74}.

    \begin{corollary}\label{quasiaction}
    Let $N$ be a Carnot group and $S=N\rtimes \mathbb R$ be the associated
    solvable Lie group.  There is a left invariant Riemannian metric $g_0$ on $S$ with the following property.
   Let $G$ be a  
      group that quasi-acts  on  $S$. If  the quasi-action is co-bounded, then
    the quasi-action    is quasi-conjugate to an isometric action of $G$ on $(S, g_0)$.
    \end{corollary}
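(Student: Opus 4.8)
The plan is to derive the corollary from Theorem \ref{tukia} through the standard correspondence between cobounded quasi-actions on Gromov hyperbolic spaces and uniform quasiconformal groups on their boundaries. The first, and least routine, step is to build the Riemannian metric $g_0$. Since $S=N\rtimes\R$ is simply connected, any two left invariant Riemannian metrics on $S$ are bi-Lipschitz via the identity map, hence quasi-isometric; so it suffices to produce one metric $g_0$ for which the conclusion holds and to note that a cobounded quasi-action with respect to any fixed left invariant Riemannian metric on $S$ transfers, by precomposition with this bi-Lipschitz identity, to a cobounded quasi-action on $(S,g_0)$. By \cite{H74}, $(S,g_0)$ is Gromov hyperbolic; its boundary is $\partial S=\hat N$, and $N=\partial S\setminus\{\infty\}$ carries a visual metric that, for the standard Carnot dilation action of $\R$, is bi-Lipschitz to a left invariant Carnot-Caratheodory metric. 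I would choose $g_0$ so that (i) the conformal class of this visual metric coincides with that of the metric $d_0$ of Theorem \ref{tukia} and (ii) every $d_0$-conformal homeomorphism of $\hat N$ extends to an isometry of $(S,g_0)$. Both requirements are dictated by the extremal description of $d_0$ from Definition \ref{max-def} and Lemma \ref{maximal}: the stabilizer of $\infty$ in $\mathrm{Conf}(\hat N,d_0)$ is the similarity group of $(N,d_0)$, and this is realized as the stabilizer of $\infty$ in $\mathrm{Isom}(S,g_0)$ as soon as the inner product defining $g_0$ restricts compatibly with $d_0$ on the first layer of $N$; in the exceptional case where $N\rtimes\R$ is a rank-one symmetric space one instead takes $g_0$ to be the symmetric metric, which is again left invariant, so that the $d_0$-conformal maps moving $\infty$ are also realized as isometries.

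Next I would move to the boundary. Given a cobounded quasi-action of $G$ on $(S,g_0)$, each map $G_g$ of the quasi-action is an $(L,A)$-quasi-isometry with $L,A$ uniform, hence induces a self-homeomorphism $\partial G_g$ of $\hat N$ that is quasisymmetric — equivalently quasiconformal, by standard properties of the visual metric — with distortion bounded in terms of $L$, $A$ and the hyperbolicity constant of $(S,g_0)$. Since boundary maps compose and maps at bounded distance from the identity induce the identity on $\hat N$, the assignment $g\mapsto\partial G_g$ is a genuine homomorphism, so $G$ acts on $\hat N$ as a uniform quasiconformal group. Moreover, because $(S,g_0)$ is a homogeneous, hence visual, hyperbolic space, there is a coarse equivalence between $(S,g_0)$ and the space of distinct triples $T(\hat N)$ that intertwines, up to uniformly bounded error, the quasi-action on $(S,g_0)$ with the honest $G$-action on $T(\hat N)$; coboundedness of the quasi-action therefore forces the $G$-action on $T(\hat N)$ to be cocompact.

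Theorem \ref{tukia} now applies and yields a quasiconformal $f\colon\hat N\to\hat N$ such that $f\circ(\partial G_g)\circ f^{-1}$ is $d_0$-conformal for every $g\in G$. By the choice of $g_0$, composing the homomorphism $g\mapsto f\circ(\partial G_g)\circ f^{-1}$ with the map $\mathrm{Conf}(\hat N,d_0)\to\mathrm{Isom}(S,g_0)$ sending a conformal homeomorphism to its isometric extension produces an isometric action $\rho$ of $G$ on $(S,g_0)$. To finish I would extend $f$ to a quasi-isometry $\bar f$ of $(S,g_0)$ — a quasisymmetric self-homeomorphism of the boundary of a proper visual hyperbolic space is always the boundary map of a self-quasi-isometry — and observe that for each $g$ the two quasi-isometries $\rho(g)\circ\bar f$ and $\bar f\circ G_g$ induce the same boundary map $f\circ(\partial G_g)$. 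Since quasi-isometries of a visual hyperbolic space inducing equal boundary maps lie at uniformly bounded distance, $\bar f$ quasi-conjugates the quasi-action $\{G_g\}$ to the isometric action $\rho$; composing with the bi-Lipschitz identity from the given metric on $S$ to $g_0$ gives the quasi-conjugacy asserted in the corollary.

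I expect the bookkeeping in the last two paragraphs to be essentially standard once Theorem \ref{tukia} is in hand. The main obstacle is the construction of $g_0$ in the first step: one must simultaneously match the visual conformal structure of $(S,g_0)$ with the conformal class of $d_0$ and embed $\mathrm{Conf}(\hat N,d_0)$ into $\mathrm{Isom}(S,g_0)$ — in particular handling the rank-one symmetric special case separately — and one must set up with care, for these visual solvable hyperbolic groups, the precise dictionary identifying self-quasiconformal maps of $\hat N$ with self-quasi-isometries of $S$ up to bounded distance, which is exactly what lets the boundary conjugacy $f$ lift to the interior conjugacy $\bar f$.
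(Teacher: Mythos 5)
Your overall route is the same as the paper's: pass to the boundary, observe that a cobounded quasi-action on $S$ induces a uniform quasiconformal group on $\hat N$ whose action on the triple space is cocompact, apply Theorem \ref{tukia}, realize $\mathrm{Conf}(\hat N,d_0)$ inside $\mathrm{Isom}(S,g_0)$ with the rank-one symmetric case handled separately, and lift the boundary conjugacy to an interior quasi-conjugacy. The two places where your write-up is looser than the paper are both worth flagging.

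The genuine gap is in the construction of $g_0$ in the non-Iwasawa case. You assert that $\mathrm{Sim}(N,d_0)$ is realized inside $\mathrm{Isom}(S,g_0)$ ``as soon as the inner product defining $g_0$ restricts compatibly with $d_0$ on the first layer of $N$.'' That condition is necessary but not sufficient. A $d_0$-conformal map fixing $\infty$ has the form $L_n\circ\delta_{t}\circ A$ with $A$ a graded automorphism that is an isometry of the Carnot-Caratheodory metric; the CC metric only controls $A$ on $V_1$, while for $(x,s)\mapsto(Ax,s)$ to be an isometry of $(S,g_0)$ one needs $dA$ to be orthogonal on \emph{every} layer $V_j$ of $\mathfrak n$ for the inner product determining $g_0$. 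The paper gets this by observing that $g\mapsto A_g|_{V_j}$ has relatively compact image in $GL(V_j)$ for each $j$ (precompactness on $V_1$ propagates to the other layers since $A_g$ is a graded Lie algebra automorphism), and then averaging to produce a $\mathrm{Conf}(\hat N,d_0)$-invariant inner product $\langle\cdot,\cdot\rangle_j$ on each $V_j$; $g_0$ is then built from the orthogonal direct sum of these with the $\R$-factor. Your ``compatibly on the first layer'' phrasing elides exactly this step, which is the content of the lemma. The fix is short but must be said.

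A smaller point: you should make explicit that this is not in tension with Lemma \ref{maximal}, i.e.\ that the averaged inner product can be chosen so that the resulting CC metric on $N$ is still the maximally symmetric $d_0$ (or at least conformally equivalent to it), since otherwise the output of Theorem \ref{tukia} and the metric used to build $g_0$ would not match. The paper sidesteps this by taking $d_0$ from Lemma \ref{maximal} first, so that $IA(N,d_0)$ is a maximal compact $K_0\subset\mathrm{Aut}_g(N)$, and then averaging over $K_0$; you should either do the same or verify the compatibility. The rest of your argument, including the rank-one symmetric case and the boundary-to-interior lift via the visual/Morse correspondence, matches the paper.
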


    We next turn  to  uniform quasiconformal groups on more general nilpotent Lie groups.
     Let $N$ be  a  simply connected nilpotent Lie group with Lie algebra $\mathfrak n$
       and $D$  a derivation of $\mathfrak n$.  We say $(N, D)$ is a diagonal Heintze pair if
        $D$ has positive eigenvalues and is diagonalizable over $\mathbb R$.
          Let $(N, D)$ be  a diagonal Heintze pair.
  A distance   $d$  on $N$ is called $D$-homogeneous    if it is left invariant,   induces the manifold topology on $N$ and such that
 $d(e^{tD}x, e^{tD}y)=e^t d(x, y)$ for all $x, y\in N$ and $t\in \mathbb R$, where $\{e^{tD}|t\in \mathbb R\}$  denotes the automorphisms  of $N$ generated by the derivation $D$.   
   By  
    Theorem 2 of \cite{HSi90}, $D$-homogeneous    distances exist on $N$.
     It is easy to see that any two $D$-homogeneous distances on $N$ are biLipschitz equivalent.   We will always equip $N$ with a $D$-homogeneous distance. Hence it makes sense to
   speak of  a biLipschitz map of $N$ without specifying the  $D$-homogeneous distance.

  Let $(N, D) $ be a  diagonal Heintze pair. 
      Then there is a sequence of $D$-invariant Lie sub-algebras
     $\{0\}=\mathfrak n_0\subset \mathfrak n_1\subset \cdots\subset \mathfrak n_s= \mathfrak n$  with the following properties:   each $\mathfrak n_{i-1}$ is an ideal of $\mathfrak n_{i}$  with the quotient
     $\mathfrak n_{i}/{\mathfrak n_{i-1}}$ a Carnot Lie algebra;  $D$ induces a derivation $\bar D:  \mathfrak n_{i}/{\mathfrak n_{i-1}}  \ra
      \mathfrak n_{i}/{\mathfrak n_{i-1}} $  which is a multiple of the Carnot derivation of
       $\mathfrak n_{i}/{\mathfrak n_{i-1}}$. See Section \ref{bilip-diagonal} for more details.
      Let $N_i$ be the connected Lie subgroup of $N$ with Lie algebra $\mathfrak n_i$.  Then $N/{N_i}$  is a homogeneous manifold and the natural map
       $\pi_i:  N/{N_{i-1}}   \ra N/{N_i}$  is a fiber bundle with fiber the Carnot group $N_i/{N_{i-1}}$.
        We call the sequence of subgroups $0=N_0<N_1<\cdots<N_s=N$  the \emph{preserved subgroup sequence}.

         Let $d$ be  a $D$-homogeneous distance  on $N$.
       In general  $d$  does not induce any metric on the homogeneous   space $N/{N_i}$
         when $N_i$ is not normal in $N$.  Nonetheless,     it induces a metric on the fibers
        $ N_i/{N_{i-1}}$  of  $\pi_i:  N/{N_{i-1}}   \ra N/{N_i}$.
    Furthermore, every biLipschitz map $F$ of $N$ permutes the cosets of $N_i$ for each $i$.   Hence $F$ induces a   map $F_i: N/{N_i}  \ra N/{N_i}$  and a bundle map of  $\pi_i:  N/{N_{i-1}}   \ra N/{N_i}$.  The restriction of $F_{i-1}$ to  the  fibers of $\pi_i$ are biLipschitz maps of the Carnot group   $ N_i/{N_{i-1}}$ in the following sense.      For each $p\in N$, let $F_p=L_{F(p)^{-1}}\circ F\circ L_p$, where  $L_x$ denotes the left translation of $N$ by $x$.   Notice that the map
$(F_p)_{i-1}: N/{N_{i-1}}\ra N/{N_{i-1}}$ satisfies
$(F_p)_{i-1}(N_i/{N_{i-1}})=N_i/{N_{i-1}}$.  The statement above  simply means
  $(F_p)_{i-1}|_{N_i/{N_{i-1}}}:  N_i/{N_{i-1}}\ra  N_i/{N_{i-1}}$  is biLipschitz with respect to any  left invariant Carnot-Caratheodory metric on $N_i/{N_{i-1}}$.
See Section \ref{bilip-diagonal} for more details.

A group  $\G$ of homeomorphisms of a metric space $X$ is a 
 \emph{uniform quasisimilarity group}  if there is a constant $K\ge 1$ such that each $\gamma\in \G$ satisfies
  $(C_\gamma/K) d(x,y)\le d(\gamma(x), \gamma(y))\le C_\gamma K d(x,y)$ for some $C_\gamma>0$ and all $x, y\in X$.
     A bijection $f: X\ra X$ is a \emph{similarity} if there is some $L>0$ such that
      $d(f(x), f(y))=L d(x,y)$ for all $x,y\in X$.   

\begin{theorem}\label{foliatedtheorem}
Let  $(N,D)$ be  a diagonal Heintze pair  and
 $\G$ be a    
 uniform quasisimilarity group of $(N,D)$ that acts cocompactly on the space of distinct pairs of $N$ (or equivalently $\G$ a group that quasi-acts coboundedly on $S=N \rtimes_D \R$).
      Let $I=\{i| 1\le i\le s, \; \dim(N_i/{N_{i-1}})\ge 2\}$.
    Then there exists a biLipschitz map $F_0: N \to N$    and   a left invariant
     Carnot-Caratheodory metric  $d_i$ on  $N_i/{N_{i-1}}$ for each $i\in I$
    such  that     for each $p\in N$ and   each  $g\in F_0\G F^{-1}_0$, the map
      $(g_p)_{i-1}|_{N_i/{N_{i-1}}}:  (N_i/{N_{i-1}}, d_i)\ra  (N_i/{N_{i-1}}, d_i) $
          is a similarity.

\end{theorem}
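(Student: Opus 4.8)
The plan is to bootstrap the classical Tukia/Chow theorem (Theorem \ref{tukia} here, for Carnot groups) up the tower of subquotients $N_i/N_{i-1}$ by an induction on $s$. First I would analyze the structure of a uniform quasisimilarity group $\G$ of $(N,D)$. After passing to the induced action on $S = N \rtimes_D \R$ (a Gromov hyperbolic space by \cite{H74}-type arguments), the hypothesis of cocompactness on distinct pairs of $N$ becomes cocompactness of the induced action on the space of distinct triples of $\partial S = \hat N$ where it is available; more precisely, the quasisimilarity group on $N$ extends to a uniform quasiconformal group on $\hat N$ (fixing $\infty$) and the cocompact-on-pairs hypothesis gives us cocompactness on triples once we account for the extra parabolic direction. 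So the real input is a uniform quasiconformal group $G$ of $\hat N$ acting cocompactly on distinct triples.

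The key geometric fact, already recorded in the excerpt, is that every biLipschitz map $F$ of $N$ permutes the cosets of each $N_i$, hence descends to maps $F_i \colon N/N_i \to N/N_i$ and acts on each fiber of $\pi_i \colon N/N_{i-1} \to N/N_i$ by a biLipschitz map of the Carnot group $N_i/N_{i-1}$; moreover the \emph{quasiconformal} constants are inherited. Consider first $i = s$, i.e. the top Carnot quotient $N/N_{s-1}$. The induced group $\G_{s-1}$ of maps of $N/N_{s-1}$ — really of $\widehat{N/N_{s-1}}$ — is a uniform quasiconformal group of a Carnot group, and I must check it still acts cocompactly on distinct triples (this should follow because the distinct-triples action of $\G$ on $\hat N$ surjects, up to the fiber directions, onto that of $\G_{s-1}$). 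Applying Theorem \ref{tukia} to $N/N_{s-1}$ yields a quasiconformal $f_s$ of $\widehat{N/N_{s-1}}$ conjugating $\G_{s-1}$ into the conformal group of the metric $d_0$ on $N/N_{s-1}$; this is our $d_s$ (when $\dim N_s/N_{s-1} \ge 2$). One then needs to promote $f_s$ to a biLipschitz self-map of $N$ that covers it — this is a bundle/lifting step, choosing a section and extending, which changes $\G$ by a biLipschitz conjugacy but keeps the quasisimilarity hypotheses.

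Having straightened out the top quotient, restrict attention to the stabilizer structure over a fixed coset of $N_{s-1}$: the conjugated group now acts on the bundle $N \to N/N_{s-1}$ with conformal (hence, after rescaling, similarity) action on the base, and the fiber $N_{s-1}$ is itself a nilpotent Lie group with the restricted derivation, i.e. $(N_{s-1}, D|_{\mathfrak n_{s-1}})$ is again a diagonal Heintze pair with preserved subgroup sequence $0 = N_0 < \cdots < N_{s-1}$. The inductive hypothesis applied fiberwise — together with a uniformity/measurability argument to make the fiberwise biLipschitz maps $F_0$ assemble into a single biLipschitz map of $N$ — completes the induction, producing the metrics $d_i$ for $i \in I$, $i \le s-1$. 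I expect the main obstacle to be precisely this assembly step: verifying that the Tukia conjugacies chosen fiber-by-fiber can be done \emph{uniformly} in the base point $p \in N$ and glued to a globally biLipschitz $F_0$ on $N$, while simultaneously tracking that the cocompact-on-distinct-triples (equivalently cobounded-quasi-action) hypothesis is inherited by every subquotient and every fiber. A secondary technical point is handling the $i \notin I$ strata (where $\dim N_i/N_{i-1} = 1$), where conformality is automatic and no Tukia input is needed, and making sure the derivation $D$ acting on each one-dimensional piece is a genuine scalar so that the induced map is literally a similarity.
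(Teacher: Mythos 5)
Your proposal takes a genuinely different route from the paper, and it has two concrete gaps that the paper's argument is specifically designed to avoid.

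First, the lifting step. You apply Theorem~\ref{tukia} to the top Carnot quotient $N/N_{s-1}$ to produce a quasiconformal map $f_s$ of $\widehat{N/N_{s-1}}$, and then propose to ``promote $f_s$ to a biLipschitz self-map of $N$ that covers it.'' There is no reason an abstract quasiconformal conjugacy on the base lifts to a biLipschitz map of the total space: a quasiconformal map of $N/N_{s-1}$ need not even be Lipschitz transverse to the fibers, and choosing a set-theoretic section of $N\to N/N_{s-1}$ gives no control over distances in the fiber directions or over the $D$-homogeneous distance on $N$. The paper circumvents this entirely: in Theorem~\ref{radialfoliated} the conjugating map $F_0$ is constructed \emph{directly on $N$} as a uniform-on-compacta limit of $F_j = e^{s_j D}\circ h_j$, each of which is already a quasisimilarity of $N$ since $h_j\in\G$ and $e^{s_jD}$ is a $D$-dilation. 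So $F_0$ is automatically biLipschitz, and its effect on the base and fiber maps $(F_0)_{i,p}$ is obtained by pushing down, not by lifting.

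Second, the assembly step. After handling the top quotient you ``restrict attention to the stabilizer structure over a fixed coset of $N_{s-1}$'' and apply the inductive hypothesis ``fiberwise, together with a uniformity/measurability argument.'' You correctly flag this as the main obstacle, but the obstacle is fatal as stated: (a) the whole group $\G$ does not preserve a given $N_{s-1}$-coset, so only a (typically small, possibly non-cocompact) stabilizer acts on a single fiber, and you lose the radial-limit-point hypothesis on which any Tukia-type argument depends; (b) even if each fiber could be straightened, the conjugating maps would be chosen fiber-by-fiber with no coherence, and there is no mechanism forcing the result to be measurable, let alone globally biLipschitz with respect to the $D$-homogeneous distance on $N$ (which mixes fiber and base directions in a nonuniform way dictated by the eigenvalue ratios). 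The paper avoids this by never working fiber-by-fiber: the invariant object is a single measurable \emph{fiber conformal structure} $\mu:N/N_{i-1}\to SL(m_i)/SO(m_i)$ on all of $N/N_{i-1}$ (Proposition~\ref{existence2}), approximate continuity of $\mu$ at one radial limit point feeds a global zoom-in, and Lemma~\ref{inmeasure2} (proved via Fubini against the invariant product measure of Corwin--Greenleaf) converts smallness in measure on $N/N_{i-1}$ into the conformality of \emph{every} fiber map of the limit. Finally, the paper does not induct on $s$ at all: it treats each index $i\in I$ separately and then observes that the second conjugacy does not destroy the first, because that conjugating map is itself a limit of compositions of group elements and $D$-dilations, all of which already preserve the first fiber-similarity structure. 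Your inductive scheme would also need to justify that the preserved subgroup sequence of $(N_{s-1}, D|_{\mathfrak n_{s-1}})$ is the truncation of that of $(N,D)$, which is plausible but not immediate from the normalizer-and-refine construction; in the paper's scheme this question simply never arises.
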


Ideally one would like to conjugate  the group $\Gamma$  in   Theorem \ref{foliatedtheorem}   into a group of similarities of $N$ with respect to some $D$-homogeneous distance.   But this question is still open in general.
A positive answer was given in \cite{DFX}  in the case when the preserved subgroup sequence has only two terms $0<N_1<N$.   Its proof is  much more involved algebraically   and uses Theorem
\ref{foliatedtheorem}  as a crucial ingredient.

When $s\ge 2$,  \cite{CP17}   implies that every quasiconformal map of $\hat N=N\cup \{\infty\}$ fixes $\infty$ and  restricts to a biLipschitz map of $N$.
  From this it is easy to see that a uniform quasiconformal group of  $\hat N$ restricts to a
uniform quasisimilarity group of $N$.   Therefore  there is no loss of generality  in Theorem \ref{foliatedtheorem}
 in considering a uniform quasisimilarity group of $N$ instead of a uniform quasiconformal group of   $\hat N$.

 When $s=1$, then $N$ is Carnot and depending on $N$ not all quasiconformal maps of $\hat N$ are necessarily biLipschitz.
In this case,  Theorem \ref{foliatedtheorem} simply asserts that if the  quasiconformal group from Theorem \ref{tukia} happens to consist of biLipschitz maps then the conjugating map can be chosen to be biLipschitz.

 Theorem \ref{foliatedtheorem}   was proved in \cite{D10}  in the case when $N$ is a Euclidean group. In the case $N = \R$, this result can be found in the appendix of \cite{FM99} and no additional assumptions other than uniformity are needed on the group.

The case $N=\R$ is used for the last step
  in the  proof of quasi-isometric rigidity of SOL by   Eskin-Fisher-Whyte \cite{EFW12}, \cite{EFW13}, while the cases covered in \cite{D10} are used to prove quasi-isometric rigidity of higher rank generalizations of SOL by Peng in  \cite{P11a}, \cite{P11b}.
 Similarly Theorem \ref{foliatedtheorem}   played a crucial role in the proof of quasi-isometric rigidity of
     a class of  solvable groups  \cite{DFX}.

  The group $SOL= \R^2 \rtimes \R$ where the action of $\R$ scales by $e^t$ on the first coordinate and by $e^{-t}$ along the second. This action gives rise to two foliations by hyperbolic planes (which we view as $\R\rtimes \R$).
     More generally  a \emph{SOL-like group} is a semi-direct product $(N_1\times N_2)\rtimes \mathbb R$, where  $N_i$ is a simply connected nilpotent Lie group with Lie algebra $\mathfrak n_i$,  and  the action of  $\mathbb R$ on $N_1\times N_2$ is generated by a derivation $D=(-D_1, D_2)$ of $\mathfrak n_1\times \mathfrak n_2$  and $D_i$ is a derivation of $\mathfrak n_i$ whose eigenvalues have positive real part.   A SOL-like group is foliated by two families of negatively curved solvable groups $N_i\rtimes_{D_i}\mathbb R$ (these are called Heintze groups  and they are exactly those solvable Lie groups admitting left invariant Riemannian metrics with negative sectional curvature).
  The quasi-isometric rigidity result proved in \cite{DFX} are for SOL-like groups where $(N_i, D_i)$ is either of Carnot type or  has  a preserved subgroup sequence of length two.
We remark that the quasi-isometric rigidity for non-unimodular  SOL-like groups   where $(N_i, D_i)$ is  of  Carnot Type is included in Theorem C,  \cite{Fe22}  and that proof does not use  Tukia type theorems.

 {\bf{Acknowledgments.}}
T. Dymarz was supported by NSF career  grant 1552234.  D. Fisher was supported  by  NSF grants DMS-1906107 and DMS-2246556. 
X. Xie   was partially  supported by
    Simons Foundation grant \#315130.   
 X. Xie   would  like to thank  the department of mathematics,   University of Wisconsin at Madison   for financial support  during his visit there   in February 2020.
We would also like to thank  Tom Ferragut   for   useful comments on an earlier version.

 \section{Preliminaries}\label{preli}

\subsection{Carnot groups}

Let $N$ be a Carnot group with Lie algebra $\mathfrak n=V_1\oplus\cdots \oplus V_k$.
The exponential map ${\text{exp}}:   \mathfrak n\ra N$ is a diffeomorphism.  We will identify $N$ and $\mathfrak n$ via the exponential map.
  For any $t>0$,   the Carnot group dilation  $\delta_t:   \mathfrak n\ra \mathfrak n$  is  given by
  $\delta_t(\sum_{j=1}^k x_j)=\sum_{j=1}^k t^j x_j,$ with $x_j\in V_j$.    They are similarities w.r.t any left invariant
    Carnot-Caratheodory metric $d$:   $d(\delta_t(x), \delta_t(y))=t\, d(x,y)$  for any $x, y\in \mathfrak n$.
    The determinant of $\delta_t: \mathfrak n\ra \mathfrak n$ is $t^Q$, where $Q=\sum_{j=1}^k jm_j$ is the homogeneous dimension of $N$.

Let $\{e_{jl}:   1\le l\le m_j\}$ be a basis for $V_j$, $1\le j\le k$.
 Let $n=\dim \mathfrak n$.
  Then the  map $\mathbb R^n\ra N$ given by $(x_{jl})\to \text{exp}(\sum_{j,l}x_{jl}e_{jl})$  is a diffeomorphism and the push-forward of the Lebesgue measure under this  map is a Haar measure on $N$.    We shall use the notation $|A|$
     for the Haar measure of a subset $A\subset N$.
  Define  a function  $\rho: \mathfrak n\ra [0, \infty)$  by
 $$\rho\left(\sum_{j,l}x_{jl}e_{jl}\right)=\sum_{j=1}^k \sum_{l=1}^{m_j}|x_{jl}|^{\frac{1}{j}}.$$
   Then $\rho(\delta_t(x))=t\, \rho(x)$  for all $x\in \mathfrak n$.
         There is an associated  ``distance" $d_\rho$  given by
  $d_\rho(x,y)=\rho((-x)*y)$.
     It is easy to see that   $d_\rho$ is left invariant  and satisfies
       $d_\rho(\delta_t(x), \delta_t(y))=t\, d_\rho(x,y)$.    Hence  $d_\rho$ is biLipschitz equivalent   with any left invariant Carnot-Caratheodory metric  $d$:
 there exists some constants  $L\ge 1$ such that
  $d(x,y)/L\le d_\rho(x,y)\le L d(x,y)$ for all $x, y\in N$.

       Let $V_1$ be equipped with an inner product and  we may assume 
         $\{e_{1l}: 1\le l\le m_1\}$   is  an orthonormal basis for $V_1$.   Let  $X_{1l}$ be the left invariant vector field on $N$
          determined by $e_{1l}$.             For any smooth function $u: U\ra \R$ defined on an open subset of $N$,
                   define the horizontal gradient $\triangledown u$   of $u$  by:
                    $$\triangledown u=\sum_{l=1}^{m_1} (X_{1l}u)X_{1l}.$$
                      The length of $\triangledown u$  is:
                      $$|\triangledown u|=\sqrt{\sum_{l=1}^{m_1} (X_{1l}u)^2}.$$

Let   $d$ be the  left invariant  Carnot-Caratheodory metric  on $N$ determined by the inner product on $V_1$,
   let    $U, V\subset N$    be open subsets, and $f: U\ra V$ a   homeomorphism.
For $x\in U$ and $r>0$ with $B(x,r)\subset U$, let
 $$L_f(x,r)=\sup_{d(y, x)\le r} d(f(x), f(y)),   \;\;\; l_f(x,r)=\inf_{d(y, x)\ge r} d(f(x), f(y)).$$
     Define
 $$K(f,x)=\limsup_{r\ra 0}\frac{L_f(x,r)}{l_f(x,r)}.$$
 We call $f$ $K$-quasiconformal  for some $K\ge 1$ if $K(f,x)\le K$ for a.e. $x\in U$.

Let $U, V\subset N$ be open subsets, and $f: U\ra V$ a quasiconformal map. By   \cite{P89}, $f$ is Pansu differentiable a.e. and the Pansu differential   $Df(x)$ is a graded automorphism for a.e.    $x\in U$.
 Recall that an automorphism $A: \mathfrak n\ra \mathfrak n$ is graded if it commutes with $\delta_t$ for all $t>0$;
  equivalently $A$ is graded if  $A(V_j)=V_j$ for each $1\le j\le k$.    Furthermore,   $f$ is absolutely continuous w.r.t.
       the  Haar measure, see \cite{P89}.
     For any $x\in U$, define
     $$L_f(x)=\limsup_{y\ra x}\frac{d(f(x), f(y))}{d(x,y)},\;\;\;  l_f(x)=\liminf_{y\ra x}\frac{d(f(x), f(y))}{d(x,y)}.$$
            By  Lemma 3.3 in \cite{CC06},   for a.e. $x\in U$, we have
     $$L_f(x)=\max\{Df(x)X:  X\in V_1,  |X|=1\},$$
      $$l_f(x)=\min\{Df(x)(X):  X\in V_1,  |X|=1\},$$
       and $K(f,x)=\frac{L_f(x)}{l_f(x)}$.    For any $x\in U$, the volume derivative of $f$ at $x$ is:
       $$f'(x)=\lim_{r\ra 0} \frac{|f(B(x,r))|}{|B(x,r)|},$$
    which exists a.e. and  is a.e. finite.          By (4.1) in \cite{CC06},   $l_f(x)^Q\le f'(x)\le L_f(x)^Q$ for a.e. $x\in U$,
      where $Q$ is the homogeneous dimension of $N$.

\subsection{Homogeneous distances on nilpotent Lie groups}\label{homo distance}

Let $(N,D)$ be a   diagonal Heintze pair.
     Let   $0<\lambda_1<\cdots <\lambda_r$ be the distinct eigenvalues of $D$ and 
     $\mathfrak n=\oplus_j V_{\lambda_j}$ be the decomposition of $\mathfrak n$ into the direct sum of eigenspaces of $D$.
  An inner product $\left<\cdot , \cdot \right>$ on $\mathfrak n$ is called a $D$-inner product if
  the eigenspaces corresponding to distinct eigenvalues are perpendicular with respect to
  $\left<\cdot , \cdot \right>$.  By the construction in Theorem 2 of \cite{HSi90}, given any $D$-inner product $\left<\cdot , \cdot \right>$ on $\mathfrak n$, there is a $D$-homogeneous distance $d$ on $N$ such that
  $d(0,x)=\langle x,x\rangle^{\frac{1}{2\lambda_j}}$ for $x\in V_{\lambda_j}$.

        For computational purposes, we also define a function $\rho$ that is biLipschitz equivalent to a $D$-homogeneous
        distance
         $d$.  For any  $D$-inner product $\left<\cdot , \cdot \right>$ on  $\mathfrak n$  define a ``norm'' on $\mathfrak n$ by
                 $$||v||=\sum_i |v_i|^{\frac{1}{\lambda_i}} ,$$
                  where $v=\sum_i v_i$ with $v_i\in V_{\lambda_i}$.
                  Then   define $\rho$ by
                  $\rho(x,y)=||x^{-1}*y||$.  We identify $\mathfrak n$ and $N$.
                   Clearly $\rho$ is left invariant, induces the manifold topology and satisfies
                   $\rho(e^{tD}x, e^{tD}y)=e^t \rho(x, y)$ for all $x, y\in \mathfrak n$ and $t\in \mathbb R$.   It follows that for any $D$-homogeneous  distance $d$ on $\mathfrak n$,  there is a constant $L\ge 1$ such that   $d(x,y)/L\le \rho(x,y)\le L\cdot d(x,y)$ for all $x, y\in \mathfrak n$.
                       The explicit formula for $\rho$ will make the calculations much easier.

 \begin{lemma}\label{bilip auto}
 Let $\phi$ be an  automorphism of $N$.
 Then $\phi$ is  biLipschitz  if and only if  $d\phi$ is
  ``layer preserving''; that is, $d\phi(V_{\lambda_j})=V_{\lambda_j}$ for each $j$.
 \end{lemma}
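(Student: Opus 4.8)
The plan is to work throughout with the explicit quasi-norm $\rho$ of Section \ref{homo distance} rather than with an arbitrary $D$-homogeneous distance $d$; since $d$ and $\rho$ are biLipschitz equivalent this is harmless, and because $\phi$ is a group automorphism we have $\rho(\phi(x),\phi(y))=\|\phi(x^{-1}*y)\|$, so $\phi$ is biLipschitz if and only if there is $L\ge 1$ with $\|v\|/L\le\|\phi(v)\|\le L\|v\|$ for every $v\in\mathfrak n$. Identifying $\phi$ with its differential $d\phi$, a linear automorphism of $\mathfrak n$, I would write it in block form $\phi=(\phi_{ij})$, $\phi_{ij}\colon V_{\lambda_j}\to V_{\lambda_i}$, relative to the eigenspace decomposition $\mathfrak n=\bigoplus_j V_{\lambda_j}$. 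Two elementary facts drive the argument: $e^{tD}$ acts on $V_{\lambda_i}$ as the scalar $e^{t\lambda_i}$, and (directly from the definition of $\rho$) $\|e^{tD}v\|=e^{t}\|v\|$ for all $v$ and $t$.

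For the direction ``layer preserving $\Rightarrow$ biLipschitz'', note that if every off-diagonal block vanishes then each diagonal block $\phi_{jj}$ is an invertible linear self-map of $V_{\lambda_j}$, and for $v=\sum_j v_j$ one has $\|\phi(v)\|=\sum_j|\phi_{jj}(v_j)|^{1/\lambda_j}$. Bounding $|\phi_{jj}(v_j)|$ above and below by $|v_j|$ times the operator norms of $\phi_{jj}$ and $\phi_{jj}^{-1}$ gives the required two-sided estimate, with $L$ a suitable maximum, over the finitely many $j$, of $\|\phi_{jj}\|^{1/\lambda_j}$ and $\|\phi_{jj}^{-1}\|^{1/\lambda_j}$. (Equivalently, a block-diagonal $\phi$ commutes with every $e^{tD}$, so $\|\phi(v)\|/\|v\|$ is constant along each $e^{tD}$-orbit and hence attains its extrema on the compact set $\{v:\|v\|=1\}$, on which it is continuous and positive.)

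For the converse the key idea is a renormalization trick. Suppose $\phi$ is $L$-biLipschitz and, for $t\in\R$, set $\phi_t=e^{-tD}\circ\phi\circ e^{tD}$, which is again an automorphism of $N$. Using $\|e^{\pm tD}v\|=e^{\pm t}\|v\|$ one computes $\|\phi_t(v)\|=e^{-t}\|\phi(e^{tD}v)\|\le e^{-t}L\|e^{tD}v\|=L\|v\|$, and likewise $\|\phi_t(v)\|\ge\|v\|/L$; thus the whole family $\{\phi_t\}_{t\in\R}$ is $L$-biLipschitz. In block form $(\phi_t)_{ij}=e^{t(\lambda_j-\lambda_i)}\phi_{ij}$, so for $v\in V_{\lambda_j}$ with $|v|=1$,
\[
\|\phi_t(v)\|=\sum_i e^{t(\lambda_j-\lambda_i)/\lambda_i}\,|\phi_{ij}(v)|^{1/\lambda_i}\le L\qquad(t\in\R).
\]
Letting $t\to+\infty$ forces $\phi_{ij}(v)=0$ whenever $\lambda_i<\lambda_j$, and letting $t\to-\infty$ forces $\phi_{ij}(v)=0$ whenever $\lambda_i>\lambda_j$; since $v$ was an arbitrary vector of $V_{\lambda_j}$, $\phi_{ij}=0$ whenever $\lambda_i\ne\lambda_j$, i.e.\ $\phi(V_{\lambda_j})\subseteq V_{\lambda_j}$, with equality because $\phi$ is bijective and the $V_{\lambda_j}$ exhaust $\mathfrak n$.

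I expect the only genuinely substantive point to be the uniformity (in $t$) of the biLipschitz constant of the conjugates $\phi_t$ in the converse direction; everything else is bookkeeping with the block decomposition and the scaling law $\|e^{tD}v\|=e^{t}\|v\|$. Minor care is needed in passing between the linear map $d\phi$ on $\mathfrak n$ and the group automorphism $\phi$ on $N$ (transparent under the exponential identification $N\cong\mathfrak n$) and in observing that conjugating a group automorphism by $e^{tD}$ again yields a group automorphism, so that the identity $\rho(\phi_t(x),\phi_t(y))=\|\phi_t(x^{-1}*y)\|$ used above is legitimate.
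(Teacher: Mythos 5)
Your proof is correct and is in substance the same as the paper's. The paper's hard direction takes $v\in V_{\lambda_j}$, scales by a scalar $t$, and compares $\rho(0,tv)=|t|^{1/\lambda_j}|v|^{1/\lambda_j}$ with $\rho(0,d\phi(tv))=\sum_i |t|^{1/\lambda_i}|x_i|^{1/\lambda_i}$ as $t\to 0$ or $t\to\infty$; after substituting $t=e^{s\lambda_j}$ the resulting exponents $e^{s(\lambda_j-\lambda_i)/\lambda_i}$ are exactly those produced by your conjugation $\phi_s=e^{-sD}\phi e^{sD}$, so the renormalization framing is the same calculation in slightly different packaging, and the easy direction is identical in both.
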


        \begin{proof}
        First suppose $\phi$ is biLipschitz. Let $0\not=v\in V_{\lambda_j}$ and write
        $d\phi(v)=\sum_i x_i$ with $x_i\in V_{\lambda_i}$.
        Then $d\phi(tv)=\sum_i{tx_i}$.
           We have $$\rho(0, tv)= |v|^{\frac{1}{\lambda_j}}|t|^{\frac{1}{\lambda_j}}$$
             and
             $$\rho(0, d\phi(tv))=\sum_i |x_i|^{\frac{1}{\lambda_i}}|t|^{\frac{1}{\lambda_i}}.$$
             The biLipschitz condition implies $x_i=0$ when $i\not=j$ by letting $t\ra \infty$ or $t\ra 0$.

        Conversely assume $d\phi$ is layer preserving.
          Then there is some constant $C\ge 1$ such that
          \begin{equation}           \label{phi(v)}
          |v|/C\le |d\phi(v)|\le C |v|
          \end{equation}
            for all $v\in V_{\lambda_j}$, $\forall j$.  Now let $v\in \mathfrak n$.  Write
          $v=\sum_j v_j$ with $v_j\in V_{\lambda_j}$.  Then
          $d\phi(v)=\sum_j d\phi(v_j)$.
           We have $\rho(0, v)=\sum_j|v_j|^{\frac{1}{\lambda_j}}$ and
            $\rho(0,d \phi(v))=\sum_j|d\phi(v_j)|^{\frac{1}{\lambda_j}}$.
        Now the claim follows from (\ref{phi(v)}).

                 \end{proof}

An automorphism $\phi$  of $N$ is called graded if it satisfies the condition in Lemma
\ref{bilip auto}.   We denote by $\text{Aut}_g(N)$  the group of graded automorphisms of $N$.

For any $D$-homogeneous distance $d$ on $N$, let $\text{Sim}(N,d)$ be the group of similarities  of $(N,d)$.    By       Theorem 1.2 in \cite{KLD17}
  and  Lemma \ref{bilip auto},    $\text{Sim}(N,d)$  has the structure  $\text{Sim}(N,d)=N\rtimes (\mathbb R\times K)$, where $\mathbb R$ acts on $N$ by the automorphisms $\{e^{tD}|t\in \mathbb R\}$  and $K\subset \text{Aut}_g(N)$    is a compact  Lie  subgroup.   Given two $D$-homogeneous distances  $d_1$, $d_2$   on $N$, although $(N,d_1)$ and $(N,d_2)$ are biLipschitz,  a similarity  of $ (N,d_1)$ in general is not a similarity of   $(N,d_2)$ and so
their associated similarity groups can be very different;  see    Section \ref{example}  for an example in the  case of  Carnot groups.  The following is  a notion of $D$-homogeneous distance with the largest similarity group.  It is similar to  Definition  0.2 in
\cite{GJ19}.

\begin{definition}\label{max-def}
Let $(N,D)$ be a diagonal  Heintze pair. A $D$-homogeneous distance  $d_0$ on $N$ is maximally symmetric (with respect to similarities) if for any
 $D$-homogeneous distance  $d$ on $N$, there is a biLipschitz automorphism $\phi$ of $N$
  such that $\phi\text{Sim}(N,d)\phi^{-1} \subset \text{Sim}(N,d_0)$.
\end{definition}

\begin{lemma}\label{maximal}
Let $(N,D)$ be a diagonal   Heintze pair.  Then $N$ admits
   a  maximally symmetric $D$-homogeneous distance.

\end{lemma}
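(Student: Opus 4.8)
The plan is to reduce the statement to the classical fact that a Lie group with finitely many connected components possesses a maximal compact subgroup into which every compact subgroup can be conjugated, applied to the group $\text{Aut}_g(N)$ of graded automorphisms, combined with the structural description $\text{Sim}(N,d)=N\rtimes(\R\times K)$ recalled just before Definition \ref{max-def} and a simple averaging trick. The only place with real content is the appeal to the existence and conjugacy of maximal compact subgroups of $\text{Aut}_g(N)$, which rests on $\text{Aut}_g(N)$ having finitely many components; that is the point I would be most careful about (one can alternatively avoid quoting Cartan–Iwasawa–Malcev by noting that any compact subgroup of $\text{Aut}_g(N)$ preserves some $D$-inner product, hence lies in a compact "orthogonal-type" subgroup, and then extracting a maximal one by a dimension-and-closure argument). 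Everything else is routine.

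First I would record two facts about $\text{Aut}_g(N)$. It is the group of real points of a linear algebraic group: inside $GL(\mathfrak n)$ it is cut out by the polynomial identities saying a map is a Lie algebra automorphism together with the linear conditions $\phi(V_{\lambda_j})\subseteq V_{\lambda_j}$ for all $j$ (here the $V_{\lambda_j}$ are the $D$-eigenspaces, as in Lemma \ref{bilip auto}). Hence $\text{Aut}_g(N)$ is a Lie group with finitely many connected components, so by the Cartan–Iwasawa–Malcev theorem it has a maximal compact subgroup $K_0$, and every compact subgroup of $\text{Aut}_g(N)$ is conjugate, by an element of $\text{Aut}_g(N)$, into $K_0$. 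Second, $e^{tD}$ acts on $V_{\lambda_j}$ as the scalar $e^{t\lambda_j}$, so $e^{tD}$ commutes with every element of $\text{Aut}_g(N)$; in particular $\phi e^{tD}\phi^{-1}=e^{tD}$ for all $\phi\in\text{Aut}_g(N)$ and all $t$.

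Next I would construct $d_0$ by averaging. Fix any $D$-homogeneous distance $d$ on $N$ (these exist by \cite{HSi90}) and let $\mu$ be the normalized Haar measure on the compact group $K_0$. Set
\[
d_0(x,y)=\int_{K_0} d\bigl(\phi^{-1}(x),\phi^{-1}(y)\bigr)\,d\mu(\phi).
\]
Since each $\phi\in K_0$ is a graded automorphism, hence biLipschitz (Lemma \ref{bilip auto}) and commuting with all $e^{tD}$, one checks directly that $d_0$ is a metric, is left invariant, satisfies $d_0(e^{tD}x,e^{tD}y)=e^t d_0(x,y)$, and (using that the biLipschitz constant of $\phi$ varies continuously and is therefore bounded over the compact set $K_0$) is biLipschitz equivalent to $d$, so it induces the manifold topology. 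Thus $d_0$ is a $D$-homogeneous distance. By bi-invariance of $\mu$, every $\phi\in K_0$ is an isometry of $(N,d_0)$, so $K_0\subseteq\text{Sim}(N,d_0)$.

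Finally I would verify the maximality property. Let $d$ be an arbitrary $D$-homogeneous distance and write, as before, $\text{Sim}(N,d)=N\rtimes(\R\times K_d)$ with $K_d\subseteq\text{Aut}_g(N)$ compact; concretely $\text{Sim}(N,d)$ consists of the maps $L_a\circ e^{tD}\circ k$ with $a\in N$, $t\in\R$, $k\in K_d$. By Step 1 there is $\phi\in\text{Aut}_g(N)$ with $\phi K_d\phi^{-1}\subseteq K_0$, and $\phi$ is a biLipschitz automorphism by Lemma \ref{bilip auto}. Since $\phi L_a\phi^{-1}=L_{\phi(a)}\in N$ and $\phi e^{tD}\phi^{-1}=e^{tD}$, conjugation by $\phi$ carries $\text{Sim}(N,d)$ onto the set of maps $L_b\circ e^{tD}\circ k'$ with $b\in N$, $t\in\R$, $k'\in\phi K_d\phi^{-1}\subseteq K_0$; as translations, the $e^{tD}$, and all of $K_0$ lie in the group $\text{Sim}(N,d_0)$, it follows that $\phi\,\text{Sim}(N,d)\,\phi^{-1}\subseteq\text{Sim}(N,d_0)$. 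This is exactly the condition in Definition \ref{max-def}, so $d_0$ is maximally symmetric.
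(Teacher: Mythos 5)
Your proof is correct and follows essentially the same strategy as the paper: observe that $\text{Aut}_g(N)$ is a real algebraic group with finitely many components (the paper cites Whitney), fix a maximal compact subgroup $K_0$, average a $D$-homogeneous distance over $K_0$ to obtain a $K_0$-invariant $D$-homogeneous distance $d_0$, and for arbitrary $d$ conjugate the compact factor of $\text{Sim}(N,d)=N\rtimes(\R\times K)$ into $K_0$ using the finite-component property, while $N$ and $\{e^{tD}\}$ are preserved under conjugation by a graded automorphism. The only (harmless) difference is a minor streamlining: the paper first builds a $K_0$-invariant $D$-inner product and averages its associated distance so as to identify $\text{Sim}(N,d_0)$ exactly as $N\rtimes(\R\times K_0)$, whereas you average an arbitrary $D$-homogeneous distance and use only the inclusion $K_0\subseteq\text{Sim}(N,d_0)$, which is all the maximality argument needs.
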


\begin{proof}
Since $N$ is simply connected, $\text{Aut}_g(N)$  can be  identified with the group of graded automorphisms     $\text{Aut}_g(\mathfrak n)$  of $\mathfrak n$.
 It is easy to see that $\text{Aut}_g(\mathfrak n)$  is a real algebraic variety and so has    only a finite number of connected components by Whitney's theorem \cite{Wh57}.   Therefore $\text{Aut}_g(N)$   is  a Lie group with finitely many components.

   Let $K_0$ be a maximal compact subgroup of $\text{Aut}_g(N)$.
      Recall $\mathfrak n=\oplus_j V_{\lambda_j}$.
     For each $j$ let
    $\left<\cdot ,\cdot \right>_j$   be a $K_0$-invariant inner product  on $V_{\lambda_j}$.
        Let $\left<\cdot , \cdot \right>$ be the inner product on $\mathfrak n$ that agrees with
         $\left<\cdot ,\cdot \right>_j$  on $V_{\lambda_j}$  such that $V_{\lambda_i}$ and $V_{\lambda_j}$ are perpendicular to each other for $i\not=j$.  Let $d$ be a   $D$-homogeneous distance on $N$ associated to this inner product.
           Although  $d\phi$ is a linear isometry of  $(\mathfrak n, \langle,\rangle)$  for any $\phi\in K_0$, it is not clear that
            $\phi$
            is an isometry of $(N,d)$.
             Let $m$ be the normalized Haar measure on $K_0$.  Define a new distance $d_0$ on $N$ by  $d_0(x,y)=\int_{K_0} d(k(x), k(y))dm(k)$.   Now it is easy to check that $d_0$ is a
      $K_0$-invariant         $D$-homogeneous distance on $N$ associated to   $\langle,\rangle$
        and  $\text{Sim}(N,d_0)=N\rtimes (\mathbb R\times K_0)$.

        Now let $d$ be an arbitrary $D$-homogeneous distance on $N$. As observed above,
           $\text{Sim}(N,d)=N\rtimes (\mathbb R\times K)$, where $K$ is a  compact subgroup of $\text{Aut}_g(N)$.   Since $\text{Aut}_g(N)$  has only a finite number of components, there is some $\phi\in \text{Aut}_g(N)$  such that $\phi K\phi^{-1}\subset K_0$.  Since $N$ is normal in $N\rtimes \text{Aut}(N)$ and $\phi$ is graded we have
           $\phi\text{Sim}(N,d)\phi^{-1} \subset \text{Sim}(N,d_0)$.

\end{proof}

Let $G$ be a connected Lie group with a left invariant distance $d$ that induces the manifold topology, and $H$ a  closed normal  subgroup of $G$.
 We define a distance on $G/H$ by $\bar d(xH, yH)=\inf\{d(xh_1, yh_2)| h_1, h_2\in H\}$.     Then $\bar d$ is a left invariant distance on $G/H$ that induces the manifold topology and the quotient map $(G, d) \to (G/H, \bar d)$ is $1$-Lipschitz.
Since $H$ is normal,   we have $\bar d(xH, yH)=d(xh_1, yH)=d(yh_2, xH)=d_H(xH, yH)$ for any $h_1, h_2\in H$, where
 $d_H$ denotes the Hausdorff distance.
If $F$ is a biLipschitz map of $(G, d)$ that permutes the cosets of $H$, then $F$ induces a   biLipschitz map $\bar F: (G/H, \bar d) \to (G/H, \bar d)$   with the same biLipschitz constant as $F$.

Let $(N, D)$ be a diagonal
  Heintze pair and $d$ a $D$-homogeneous distance on $N$. Assume $\mathfrak w$ is an ideal of $\mathfrak n$ such that
 $D(\mathfrak w)\subset
\mathfrak w$.   Then $D$ induces a derivation $\bar D$ of $\mathfrak n/\mathfrak w$   and  $(N/W, \bar D)$   is  also  a  diagonal   
 Heintze pair, where $W$ is the Lie subgroup of $N$ with Lie algebra $\mathfrak w$.
     In this case, the distance $\bar d$ on $N/W$ induced by $d$ is a $\bar D$-homogeneous distance.

\subsection{Homogeneous manifolds with negative curvature}

Let $N$ be a Carnot group with Lie algebra $\mathfrak n=V_1\oplus \cdots \oplus V_k$.
  The standard dilations $\delta_t$ of $N$ define an action of $\mathbb R$    on $N=\mathfrak n$:
  $$t\cdot x=\delta_{e^t}(x)\;\;  \text{for}  \;\; t\in \mathbb R\;\;  \text{and} \;\; x\in \mathfrak n.$$
  Let
$S= N \rtimes \R $   be     the associated semi-direct product.
  Then  $S $ is a   solvable Lie group.
 By \cite{H74} $S$   admits a left invariant Riemannian metric with negative sectional curvature.
  For $x_0\in N$, the path $c_{x_0}: \R\ra S$, $c_{x_0}(t)=(x_0, t)$, is a geodesic in $S$.   We call   $c_{x_0}$
  a    vertical geodesic.
 All  vertical geodesics are asymptotic as $t \to \infty$, and so they determine a point $\infty$  in the ideal boundary. If $t \to -\infty$ all vertical geodesics diverge from one another. We call such geodesics downward oriented.
  Every geodesic ray in $S$ is asymptotic to either an upward oriented vertical geodesic or a downward oriented vertical geodesic.   It follows that
  the ideal boundary  $\partial  S$  of $S$   can be  identified with
 $\hat N=N\cup \{\infty\}$, where points   in $N$ correspond to downward oriented vertical geodesics.

\subsection{Sphericalization of metrics and measures}

Let $(X, d)$ be an unbounded metric space and $p\in X$ a base point.
    Let $\infty$ be a  point not in $X$ and set $\hat X=X\cup \{\infty\}$.
      The  sphericalized metric  $\hat d_p$   of $d$ relative to the base point $p$   is   a  metric
         on $\hat X$ satisfying:
          $$\frac{d(x,y)}{4(1+d(p,x))(1+d(p,y))}\le \hat d_p(x,y)\le  \frac{d(x,y)}{(1+d(p,x))(1+d(p,y))}\;\;\text{for}\;\; x,y\in X,$$
and  $\hat d_p(x, \infty)=\frac{1}{(1+d(p,x))}$.    Furthermore,   a     proof  similar to that of
  Proposition 4.1 in   \cite{BHX}
       shows that  the  identity map
   $\text{id}:   (X,d)\ra (X, \hat d_p)$ is $1$-quasiconformal.  It follows that a homeomorphism
    $f:  ( \hat X, \hat d_p)\ra ( \hat X, \hat d_p)$ is quasiconformal   ($1$-quasiconformal) iff the restriction
    $f:  (X\backslash\{f^{-1}(\infty)\},    d)  \ra (X\backslash\{f(\infty)\}, d)$  is quasiconformal ($1$-quasiconformal).
            We shall use this observation when we study  quasiconformal maps of $\hat N$.

              Let $(X, d, \nu) $ be a metric measure space, with $\nu$ a Borel regular measure.
                 The metric measure space   $(X, d, \nu)$ is  $\alpha$-Alhfors regular  for some $\alpha>0$   if   there is some constant $C>0$
                  such that $r^\alpha/C\le \nu(B(x, r)) \le C r^\alpha$ for all balls $B(x, r)$ with radius $0<r\le \text{diam}(X,d)$.
  Now let $(X, d, \nu)$ be a  $\alpha$-Alhfors regular metric measure space with $(X, d)$ unbounded and $p\in X$ a base point.
   Li-Shanmugalingam   \cite{LS}  defined   a measure $\hat \nu_p$ on $(\hat X, \hat d_p) $ which is also
    $\alpha$-regular  and showed that  $(X, d, \nu)$ supports a $p$-Poincare inequality if and only if
     $(\hat X, \hat d_p,   \hat \nu_p)$   supports a $p$-Poincare inequality (see Theorem 1.0.1. in \cite{LS}) . Furthermore,  the two measures $\nu$ and $\hat \nu_p$ are comparable on any bounded subset of $(X, d)$; that is, for any bounded subset $A\subset (X, d)$, there is a constant $C\ge 1$ such that
     $\nu(E)/C\le \hat \nu_p(E)\le C\nu(E)$ for any $E\subset A$.
We shall use this in the case of a Carnot group $N$ equipped with a left invariant  Carnot-Caratheodory metric and  the   Lebesgue measure.

 \section{Lemmas on quasiconformal maps of Carnot groups}\label{lemmaonQC}

In this section we collect some  results on  quasiconformal maps of Carnot groups. These will be used in
  Section \ref{conjugate} for the proof of Theorem \ref{tukia}.

Let $N$ be a Carnot group equipped with a left invariant Carnot-Caratheodory metric and $\hat N=N\cup \{\infty\}$ its one-point compactification.
A ring  in $\hat N$    is a connected open subset $R\subset \hat N$ whose complement
 has two connected components.
 We always work with rings $R$  satisfying $\infty\notin \bar R$.
   Let $C_0$ and $C_1$ be the two components of $\partial R$.   An admissible function for $R$ is a  $C^\infty$ function $u: N\ra \mathbb R$
    with $u|_{C_0}=0$ and $u|_{C_1}=1$.     The conformal capacity of  $R$ is:
         $$C(R)=\inf_{u}\int_N |\triangledown u|^Q dx,$$
          where $Q$ is the homogeneous dimension of $N$,     $u$ ranges over all admissible functions for $R$,
            and the integral is with respect to the Lebesgue measure.
             The above infimum remains the same  if   we  enlarge the class of admissible functions to include all
              Sobolev functions in $C(\bar R)\cap W^{1, Q}(R)$,   see  Proposition 11 in \cite{KR}, where the  proof is valid for all Carnot groups.  In particular,   if $f:\hat N\ra \hat N$ is  quasiconformal,
                $R$
               is a ring    with $\infty, f^{-1}(\infty)\notin \bar R$    and $u: N\ra \mathbb R$ is an admissible function for $f(R)$  as defined  above, then
                $u\circ f$ is admissible for $R$ in the generalized sense and so can be used in the estimate for $C(R)$.

          Let $\Gamma$ be a curve family in $N$.
           A non-negative Borel function $\rho: N\ra [0, +\infty]$ is an  admissible  function of $\Gamma$
                if
            $\int_\gamma \rho\, ds\ge 1$ for every locally rectifiable curve $\gamma$ in $\Gamma$.
              The conformal modulus  of $\Gamma$ is:
               $$M(\Gamma)=\inf_\rho \int_N \rho^Q dx,$$
                where $Q$ is the homogeneous dimension of $N$ and   $\rho$ varies over all admissible functions of
                 $\Gamma$.

          It is a classical result that   for any ring $R$ in  $\mathbb R^n$,  the capacity agrees with the modulus,
           $C(R)= M(\Gamma(R, C_0, C_1))$, where
            $\Gamma(R, C_0, C_1)$ is the  collection of  curves in $R$  joining $C_0$ and $C_1$.
             This result has been generalized to the case of Carnot groups by Markina  \cite{M}.

             The following result says that  a quasiconformal map preserving conformal capacity  of rings must be a conformal map.
              The  actual  assumption is slightly weaker.

             \begin{lemma}\label{capacity1qc}
             Let $f: \hat N\ra \hat N$ be a  quasiconformal  map.  Suppose
               $C(R)=C(f(R))$ for any ring $R$ in $\hat N$  satisfying $\infty,  f^{-1}(\infty)\notin \bar R$.
                    Then
                 $f$ is   conformal.

             \end{lemma}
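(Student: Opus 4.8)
The plan is to show that the Pansu differential $Df(x)$ is a conformal graded automorphism (i.e., a similarity of $V_1$, hence a scalar times an isometry) for a.e.\ $x$, which then forces $K(f,x)=1$ a.e., i.e., $f$ is $1$-quasiconformal. The bridge between the capacity hypothesis and the infinitesimal data is the identity $C(R)=M(\Gamma(R,C_0,C_1))$ for rings, due to Markina, combined with the fact that quasiconformal maps push curve families forward with controlled modulus distortion. Concretely, for a quasiconformal $f$ one has the standard inequality $M(f\Gamma)\le K_O(f)\, M(\Gamma)$ where $K_O(f)$ is the outer dilatation, and this inequality is an equality for \emph{all} curve families if and only if $f$ is conformal; the content to be extracted is that equality of capacities of \emph{all admissible rings} already forces the local (a.e.) conformality.

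First I would set up the infinitesimal comparison. Fix a density point $x_0$ of $f$ where $f$ is Pansu differentiable with graded automorphism $A=Df(x_0)$, where $L_f(x_0)$, $l_f(x_0)$ exist and equal $\max/\min$ of $|AX|$ over unit $X\in V_1$, and where the volume derivative $f'(x_0)$ exists with $l_f(x_0)^Q\le f'(x_0)\le L_f(x_0)^Q$ (all a.e.\ by the results quoted from \cite{P89}, \cite{CC06}). Taking a sequence of shrinking rings $R_j$ centered at $x_0$ — say spherical shells $B(x_0,Re^{-j})\setminus \overline{B(x_0,re^{-j})}$ rescaled by the dilations $\delta_{e^{j}}$ — the images $f(R_j)$, after renormalizing by left translation and an appropriate dilation, converge to the ``ring'' determined by $A$ applied to the model shell. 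Using the Carnot-group analogue of the Loewner/ring-capacity estimates (the capacity of a spherical ring of fixed modulus is positive and finite, and capacity is monotone under inclusion and behaves well under the graded automorphism $A$, which rescales capacity by a factor depending only on the singular values of $A$ on $V_1$ relative to $\delta_t$), the hypothesis $C(R_j)=C(f(R_j))$ passes to the limit and yields that $A$ preserves the capacity of model rings. The key algebraic point is then: a graded automorphism $A$ of $\mathfrak n$ that preserves the conformal capacity of all (or even all round) rings must act on $V_1$ as a conformal linear map; this is because the capacity of the round ring $B(0,R)\setminus\overline{B(0,r)}$ is computed by the radial admissible function $u$, and $A$ distorts this integral by $\det_{V_1}(A)/(\min|AX|)^{m_1}$-type factors unless $A|_{V_1}$ is a similarity — here one uses that the optimal function in $C(R)$ depends only on the $d$-distance and that $d$ is built from the $V_1$ inner product.

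Alternatively, and perhaps more cleanly, I would avoid the differential entirely and argue via moduli of curve families directly: by Markina's theorem $C(R)=M(\Gamma_R)$ and $C(f(R))=M(f\Gamma_R)$, so the hypothesis gives $M(f\Gamma_R)=M(\Gamma_R)$ for the family of curves joining the two boundary components of any admissible ring $R$. Since such ring curve families generate enough families (one can localize: every sufficiently small ball is the ``core'' of many admissible rings, and by a covering/exhaustion argument the equality $M(f\Gamma)=M(\Gamma)$ propagates to all curve families supported in $N\setminus\{f^{-1}(\infty)\}$ that can be captured by limits of ring families), the standard equality-case analysis of the modulus inequality for quasiconformal maps (the inequality $M(f\Gamma)\le \int_\Gamma (K_O(f,\cdot))\,\rho_\Gamma^Q$, with the pointwise dilatation $K_O(f,x)=L_f(x)^Q/f'(x)$) forces $K_O(f,x)=1$ for a.e.\ $x$, and symmetrically $K_I(f,x)=1$, whence $L_f(x)=l_f(x)$ a.e., i.e., $K(f,x)=1$ a.e. That is exactly the definition of $f$ being conformal.

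The main obstacle I anticipate is the localization/propagation step: deducing from ``capacity is preserved for every admissible ring'' that ``modulus is preserved for a rich enough class of curve families to run the a.e.\ equality-case argument.'' In $\mathbb{R}^n$ this is classical (e.g.\ via Väisälä's characterizations), but in the Carnot setting one must be careful that the rings allowed by the hypothesis (those with $\infty,f^{-1}(\infty)\notin\bar R$) still suffice to test conformality at a.e.\ point, and that the limiting argument under the dilations $\delta_t$ is justified — i.e.\ that small rings around a Pansu-differentiability point are, up to negligible error, carried by $f$ to rings governed by $Df(x)$, with capacities converging. This requires the absolute continuity of $f$ and the a.e.\ Pansu differentiability already cited, plus a semicontinuity argument for capacity under the convergence $\delta_{e^j}\circ(L_{f(x_0)^{-1}}\circ f\circ L_{x_0})\circ\delta_{e^{-j}}\to Df(x_0)$. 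Once that is in place, the remaining computation — that a capacity-preserving graded automorphism is conformal on $V_1$ — is a direct calculation with the explicit radial extremal function, and the conclusion $K(f,\cdot)\equiv 1$ follows.
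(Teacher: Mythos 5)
Your first step---blowing up at a point $p$ of Pansu differentiability via $\delta_{1/t}\circ L_{f(p)^{-1}}\circ f\circ L_p\circ\delta_t\to Df(p)$, using that translations and Carnot dilations preserve capacity and that capacity is continuous under this convergence---is exactly what the paper does, and it correctly reduces the lemma to: \emph{a graded automorphism of $N$ that preserves the capacity (equivalently, by Markina, the modulus of the separating curve family) of every ring is a similarity.}

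The gap is in how you handle that reduced statement. You propose to test against round rings $B(0,R)\setminus\overline{B(0,r)}$, asserting that the extremal function is radial and that a graded $A$ distorts the capacity by an explicit factor unless $A|_{V_1}$ is conformal. This does not work as stated: a graded automorphism does not carry round rings to round rings, so $C(A(B(0,R)\setminus\overline{B(0,r)}))$ is not given by any radial formula, and one cannot read off the distortion factor you want. Moreover the precise extremal function for round rings in a general Carnot group is not available in the form your argument needs. Your second, ``avoid the differential'' route has the same problem in a different guise: you would need to upgrade equality of ring capacities to a pointwise a.e.\ statement $K_O(f,x)=K_I(f,x)=1$, and you yourself flag the propagation step as the obstacle---it is, and the proposal does not close it.

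What the paper actually does at this point is a Carnot-group adaptation of V\"ais\"al\"a's Theorem 36.1: after diagonalizing $A$ by singular value decomposition, it constructs a very thin rectangular ring $R=\mathring Y\setminus C_0$ (a box $Y$ of thickness $2\delta/\lambda_{11}$ around a codimension-one slab $C_0$), so thin that the BCH formula controls all higher-layer coordinates of $A(Y)$ to within a polynomial-in-$\delta$ error. On $R$ the lower bound on $M(\Gamma)$ is obtained from two disjoint families of short segments in the $e_{11}$-direction, and on $f(R)$ the upper bound on $M(\tilde\Gamma)$ comes from the volume/distance estimate $M(\tilde\Gamma)\le |f(R)|/\delta^Q$. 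Letting $\delta\to 0$ gives $\lambda_{11}^Q\le J$, while $f(B(0,1))\subset B(0,\lambda_{11})$ gives $J\le\lambda_{11}^Q$; equality forces $f(B(0,1))=B(0,\lambda_{11})$, hence $A$ is a similarity. You would need to supply a construction of this kind; the round-ring argument you sketch will not carry the weight.
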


             \begin{proof}
                 Let $p\in N\backslash\{ f^{-1}(\infty)\} $ be a point   such that $f$ is Pansu differentiable at $p$ and the Pansu differential
                   $Df(p)$ is a
                  graded   automorphism of $N$.
                    Then the maps $   \delta_{\frac{1}{t}}\circ    L_{f(p)^{-1}}\circ f\circ L_p\circ \delta_t$   converges to
                     $Df(p)$ uniformly on compact subsets  as $t\ra +0$.
                   Since left translations and  the standard Carnot dilations are conformal and so preserve
                        the capacities,
                     all the maps $   \delta_{\frac{1}{t}}\circ    L_{f(p)^{-1}}\circ f\circ L_p\circ \delta_t$  preserve capacities.  Now the continuity of capacities implies the limiting map $Df(p)$
                     also preserves the capacity.   Hence we may assume $f$ is a graded automorphism of $N$
                     that preserves capacity of rings.   By the result of Markina \cite{M}  cited above,
                      $$M(\Gamma(R, C_0, C_1))=M(\Gamma(f(R), f(C_0), f(C_1)))$$
                         for any ring $R$.
                 We need to show that $f$ is a similarity.

               The proof below is a modification of the proof of Theorem 36.1 in \cite{V}.

               We first  construct the  rings that we will use.
               For this we identify $N$ with its Lie algebra $\mathfrak n=V_1\oplus \cdots \oplus V_k$.
                We fix an inner product on $\mathfrak n$ so that   the $V_j$'s are perpendicular to each other and  that  on $V_1$ it
                  agrees with the inner product on $V_1$ that defines the left invariant Carnot-Caratheodory metric on $N$.
                Denote $m_j=\dim(V_j)$.    By the singular value decomposition, there exist  orthonormal bases
                 $\{e_{j1}, \cdots, e_{j m_j}\}$  and
                 $\{\tilde e_{j1}, \cdots, \tilde e_{j m_j}\}$                   of $V_j$ and positive numbers $\lambda_{j1}\ge \cdots \ge \lambda_{jm_j}$  such that
                  $f(e_{jl})=\lambda^j_{jl} \tilde e_{jl}$.

                 Let
                 $$C_0=\left\{\sum_{j,l}  x_{jl}e_{jl}:    x_{11}=0,   |x_{1l}|\le 1 \;\;\text{for}\;\;  l=2, \cdots, m_1,
                 \;\; |x_{jl}|\le 1\;\;\text{for}\;\; j\ge 2, 1\le l\le m_j\right\}.$$
                   We will construct a   rectangular box    $Y$ that contains $C_0$ in its interior  $\mathring Y$  and the ring will be $R=\mathring Y\backslash{C_0}$.

             \noindent
         {\bf{Claim}}: For each pair $(j, l)$, $2\le j\le k$, $1\le l\le m_j$,  there is a polynomial $P_{jl}(\delta)$ without constant term  satisfying the following property:
            for $0<\delta\ll 1$, let
            $$Y= \left\{\sum_{j,l}  x_{jl}e_{jl}:    |x_{11}|\le \frac{\delta}{\lambda_{11}},   |x_{1l}|\le 1+\frac{\delta }{\lambda_{1l} } \;\;\text{for}\;\;  2\le l\le  m_1,
                 \;\; |x_{jl}|\le 1+P_{jl}(\delta)\;\;\text{for}\;\; j\ge 2, 1\le l\le m_j\right\},$$
                  then   $d(C_0, \partial Y)=\frac{\delta}{\lambda_{11}}$  and
                   $d(f(C_0), \partial f(Y))=\delta$.

                          We will   first assume the claim and finish the proof of the lemma, and then prove the claim.
                          We  consider the  ring $R=\mathring Y \backslash{C_0}$.
           Note $\partial R$ has two components: $C_0$ and $C_1:=\partial Y$.

           Let $\Gamma_1$  and $\Gamma_2$  be the families  of curves  defined by:
            $$\Gamma_1=\left\{x*(0, \frac{\delta }{\lambda_{11}})e_{11}:   \;\; x\in C_0\right\},$$
              $$\Gamma_2=\left\{x*(-\frac{\delta }{\lambda_{11}}, 0)e_{11}: \;\;  x\in C_0\right\}.$$
                The claim   $d(C_0, \partial Y)=\frac{\delta}{\lambda_{11}}$  and the definition of $Y$ implies
                 $\Gamma_1, \Gamma_2\subset  \Gamma:=\Gamma(R, C_0, C_1)$.
               The curves in $\Gamma_1$  and $\Gamma_2$ are  respectively  contained in
                the disjoint Borel sets $C_0*(0, \frac{\delta }{\lambda_{11}})e_{11}$  and
                                   $C_0*(-\frac{\delta }{\lambda_{11}}, 0)e_{11}$.
                                     This implies $M(\Gamma)\ge M(\Gamma_1)+M(\Gamma_2)$, see  Theorem 6.7 in \cite{V}.
                                   The standard calculation in
                                    quasiconformal analysis (see 7.2 in \cite{V}) shows that
                                     $$M(\Gamma_1)=M(\Gamma_2)
                                     =\frac{|C_0*[0, \frac{\delta }{\lambda_{11}}]e_{11}|}{(\frac{\delta }{\lambda_{11}})^Q}=\frac{\mathfrak L^{n-1}(C_0)\cdot (\frac{\delta }{\lambda_{11}})}{(\frac{\delta }{\lambda_{11}})^Q},$$
                                       where $\mathfrak L^{n-1}(C_0)$  is the $(n-1)$-dimensional Lebesgue
                                          measure of $C_0$   and $n=\dim(\mathfrak n)$.
                So    we have
               \begin{align}\label{e200}
               M(\Gamma)\ge M(\Gamma_1)+M(\Gamma_2)=\frac{2\, \mathfrak L^{n-1}(C_0)}{(\frac{\delta }{\lambda_{11}})^{Q-1}}.
               \end{align}

             By the claim  the minimal distance between the two boundary components $f(C_0)$ and $f(C_1)$   of     $f(R)$ is  $\delta $.
                  Now by  Theorem 7.1 in \cite{V},
                with $\tilde \Gamma=\Gamma(f(R), f(C_0), f(C_1))$ and $J$ the absolute value of the determinant of $f$,
                                \begin{align}\label{e201}
                M(\tilde \Gamma)\le  \frac{|f(R)|}{\delta ^Q}=\frac{J\cdot  |R|}{\delta ^Q}
                =\frac{ J\cdot \mathfrak L^{n-1}(F)\cdot 2\frac{\delta }{\lambda_{11}} }{\delta ^{Q}},
                \end{align}
                 where $F=\{\sum_{j,l}  x_{jl}e_{jl}\in Y: x_{11}=0\}$.

               Since $M(\Gamma)=M(\tilde \Gamma)$,   (\ref{e200}) and (\ref{e201}) imply
                $$\lambda_{11}^Q\le J\cdot \frac{\mathfrak L^{n-1}(F)}{\mathfrak L^{n-1}(C_0)}.$$
                  Since $\mathfrak L^{n-1}(F)/\mathfrak L^{n-1}(C_0)  \ra 1$ as $\delta\ra 0$, we  get
                   $\lambda_{11}^Q\le J$.
                   On the other hand,  by  Lemma 3.3 in \cite{CC06},
                 $f(B(0, 1))\subset  B(0, \lambda_{11})$, which implies
                  $$J\cdot |B(0,1)|=|f(B(0,1))|\le |B(0, \lambda_{11})|=\lambda_{11}^Q |B(0,1)|$$
                    and so $J\le  \lambda_{11}^Q$.
                     Hence                   $J=  \lambda_{11}^Q$. This implies  $f$ maps the ball  $B(0,1)$  onto the ball
                $B(0, \lambda_{11})$ and so must be a similarity.

                \noindent
               {\bf{Proof of the claim:}}
                  We only  write down the proof for  $d(f(C_0),  \partial f(Y))=\delta$, as the proof for
                   $d(C_0, \partial Y)=\frac{\delta}{\lambda_{11}}$ is very similar.
                 Let $P_{jl}$ be a polynomial without constant term and $Y$ be as defined in the claim.
                By the formula for $f$ we have
                 $$f(C_0)=\left\{\sum_{j,l}  y_{jl}\tilde e_{jl}:    y_{11}=0,   |y_{1l}|\le \lambda_{1l} \;\;\text{for}\;\;  l=2, \cdots, m_1,
                 \;\; |y_{jl}|\le \lambda_{jl}\;\;\text{for}\;\; j\ge 2\right\}$$
                 and
                 $$f(Y)= \left\{\sum_{j,l}  y_{jl} \tilde e_{jl}:    |y_{11}|\le \delta,   |y_{1l}|\le    \lambda_{1l}+\delta  \;\;\text{for}\;\;  l=2, \cdots, m_1,
                 \;\; |y_{jl}|\le \lambda_{jl}+\lambda_{jl}P_{jl}(\delta)\;\;\text{for}\;\; j\ge 2\right\};$$
                  First note that $0\in f(C_0)$,    $\delta \tilde e_{11}\in \partial f(Y)$  and
                    $d(0, \delta \tilde e_{11})=\delta$. So $d(f(C_0), \partial f(Y))\le \delta$.
                It suffices to show for suitable choices of $P_{jl}$ we have
                 $N_\delta(f(C_0))\subset f(Y)$.

                Let $w\in N_\delta(f(C_0))$.  Then  we can write $w=y*z$ with
                $y\in f(C_0)$ and $d(0,z)\le \delta$.    By the BCH formula we have
                 $w=y+z+\frac{1}{2}[y,z]+\cdots$.  Write $w=\sum_{j,l}w_{jl}\tilde e_{jl}$,
                  $y=\sum_{j,l}y_{jl}\tilde e_{jl}$, $z=\sum_{j,l}z_{jl}\tilde e_{jl}$.
                   As $d(0,z)\le \delta$, we have $|z_{jl}|\le \delta^j$.
                   As $y_{11}=0$ and $|y_{1l}|\le \lambda_{1l}$ for $l\ge 2$ we have
                    $|w_{11}|\le \delta$ and $|w_{1l}|\le \lambda_{1l}+\delta$  for $l\ge 2$.
                      For $j\ge 2$ we have $w_{jl}=y_{jl}+Q_{jl}$, where $Q_{jl}$ is a polynomial of $z_{st}$ and $y_{st}$ with $s\le j$ such that each monomial in
                    $Q_{jl}$  has at least one $z_{st}$ as a factor.  As $|y_{st}|\le \lambda_{st}$ and
                     $|z_{s,t}|\le \delta^s$, we see that there is a polynomial $P_{j,l}$   with no constant term such that  $|Q_{jl}|\le \lambda_{jl}P_{jl}(\delta)$.   This finishes the proof of the claim.

             \end{proof}

             In Lemma \ref{uniformdensity}  and Lemma \ref{holder}:  we fix an inner product on $\mathfrak n$ so that the $V_j$'s are perpendicular
              to each other;    this
              determines a left invariant Carnot-Caratheodory metric  $d_{CC}$ on $N$ and            a  Haar measure on $N$;   the metric on $\hat N$ will be the sphericalized metric    $\hat d_{CC}$  of $d_{CC}$ with respect to the origin  and the measure on $\hat N$ will be the sphericalized measure  $m$ of the Haar measure with respect to the origin.   See the end of Section \ref{preli} for more details.

             Lemma \ref{uniformdensity}  and Lemma \ref{holder}
             hold  for quasisymmetric maps on
               Alhfors regular metric measure spaces that satisfies  a Poincare inequality:
                 their  proofs use    only the reverse Holder inequality which holds in such generality, see
                   Theorem   7.11 of \cite{HK}.

 \begin{lemma}\label{uniformdensity}
 Let  $f: (\hat N, \hat d_{CC}) \ra (\hat N, \hat d_{CC})$  be an $\eta$-quasisymmetric map.
    Then there exist constants    $a, b>0$  depending only on $N$ and $\eta$ that satisfy the following.
      For any ball  $B_1\subset \hat N$ with center $x$, let $B_2$ be the largest ball in $f(B_1)$  with center $f(x)$.   Then for any measurable $E\subset \hat N$,
      $$\frac{m(f(E)\cap B_2)}{m(B_2)}\le a \left(\frac{m(E\cap B_1)}{m(B_1)}\right)^b.$$

 \end{lemma}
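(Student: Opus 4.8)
The statement is a standard ``quasisymmetric distortion of density'' estimate, and the plan is to derive it from the reverse Hölder inequality for the volume derivative of a quasiconformal (hence, by the Poincaré-inequality machinery, quasisymmetric) map, exactly as in the Euclidean case. First I would recall that on the Ahlfors $Q$-regular metric measure space $(\hat N,\hat d_{CC},m)$, which supports a $p$-Poincaré inequality (this is where the sphericalization results of Li--Shanmugalingam and of \cite{LS} are used to transfer the Poincaré inequality from $(N,d_{CC})$ with Haar measure), an $\eta$-quasisymmetric map $f$ is absolutely continuous and its volume derivative $J_f(x)=\lim_{r\to 0} m(f(B(x,r)))/m(B(x,r))$ exists $m$-a.e., is locally integrable, and satisfies $m(f(A))=\int_A J_f\,dm$ for measurable $A$. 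Moreover $J_f$ satisfies a reverse Hölder inequality on balls: there are constants $C_0\ge 1$ and $q>1$, depending only on the data and $\eta$, with $\bigl(\fint_{B} J_f^{q}\,dm\bigr)^{1/q}\le C_0\,\fint_{2B} J_f\,dm$; this is Theorem 7.11 of \cite{HK}, as the paragraph preceding the lemma indicates.

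Next I would pass to the estimate on the ``roundness-corrected'' target ball. Fix a ball $B_1=B(x,r)$ and let $B_2=B(f(x),\rho)$ be the largest ball inside $f(B_1)$. Quasisymmetry of $f$ gives a Loewner/roundness control: $f(B_1)$ is comparable to a ball, i.e. there is a constant $\lambda\ge 1$ (depending only on $\eta$) with $B_2\subset f(B_1)\subset B(f(x),\lambda\rho)=:\lambda B_2$, and $m(f(B_1))\le C_1\, m(B_2)$ for a constant $C_1$ depending only on $\eta$ and the regularity constant of $m$. Now for a measurable set $E$, write $A=E\cap B_1$, so $f(A)\subset f(B_1)$, and estimate
$$
m(f(E)\cap B_2)\le m(f(A))=\int_{A} J_f\,dm
\le m(A)^{1-1/q}\Bigl(\int_{B_1} J_f^{q}\,dm\Bigr)^{1/q}
$$
by Hölder's inequality. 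Applying the reverse Hölder inequality on $B_1$ (with its double, which we may arrange to sit inside a fixed neighbourhood where $f$ is defined, shrinking $r$ if necessary, or simply using that $\hat N$ is compact so all balls are controlled) yields
$$
\Bigl(\int_{B_1} J_f^{q}\,dm\Bigr)^{1/q}
\le C_0\, m(B_1)^{1/q}\,\fint_{2B_1} J_f\,dm
\le C_0'\, m(B_1)^{1/q-1}\, m(f(2B_1)).
$$
Combining, and using Ahlfors regularity to replace $m(f(2B_1))$ by a constant times $m(f(B_1))\le C_1 m(B_2)$ (here one uses again that $f$ of a doubled ball is comparable to $f$ of the ball, a consequence of quasisymmetry), gives
$$
\frac{m(f(E)\cap B_2)}{m(B_2)}
\le a\Bigl(\frac{m(E\cap B_1)}{m(B_1)}\Bigr)^{1-1/q},
$$
so one takes $b=1-1/q$ and $a$ the resulting product of constants, all depending only on $N$ (through $Q$ and the regularity/Poincaré constants) and on $\eta$.

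The main obstacle, and the only point requiring genuine care, is justifying the chain ``$f$ quasisymmetric $\Rightarrow$ $f$ absolutely continuous with a volume derivative obeying a reverse Hölder inequality.'' This needs the full strength of the Heinonen--Koskela theory (Theorem 7.11 of \cite{HK}): one must know that $(\hat N,\hat d_{CC},m)$ is Ahlfors $Q$-regular and supports a Poincaré inequality, which is precisely why the sphericalization discussion in Section \ref{preli} was set up, and that in this setting quasisymmetric maps are quasiconformal in the metric (pseudo-)differentiability sense so that the reverse Hölder inequality applies. The geometric bookkeeping — controlling $m(f(B_1))$, $m(f(2B_1))$, and $m(B_2)$ against one another via quasisymmetry and regularity — is routine once this analytic input is in hand, and the compactness of $\hat N$ removes any worry about balls being too large. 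I would therefore organize the write-up as: (1) cite the regularity and Poincaré inequality for $(\hat N,\hat d_{CC},m)$; (2) invoke absolute continuity, existence of $J_f$, and the reverse Hölder inequality; (3) do the two-line Hölder computation above; (4) absorb the comparison constants using quasisymmetry and Ahlfors regularity.
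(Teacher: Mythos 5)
Your proposal follows essentially the same route as the paper's proof: transfer the Poincar\'e inequality to the sphericalized space $(\hat N,\hat d_{CC},m)$ via Li--Shanmugalingam, invoke the reverse H\"older inequality of Heinonen--Koskela (Theorem 7.11 of \cite{HK}) for the volume derivative, apply H\"older's inequality to $\int_{E\cap B_1}\mu_f\,dm$, and finish by comparing $m(f(B_1))$ with $m(B_2)$ using quasisymmetry and Ahlfors regularity. The only cosmetic difference is that you use a form of the reverse H\"older inequality with the doubled ball $2B_1$ on the right, whereas the version cited in the paper is on the ball itself, making the extra step of comparing $m(f(2B_1))$ with $m(f(B_1))$ unnecessary.
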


 \begin{proof}  By \cite{J},   $N$ with the Carnot-Caratheodory metric and the   Lebesgue measure supports a $1$-Poincare inequality.
    Then  Theorem 1.0.1. of \cite{LS}  implies $\hat N$ with the sphericalized metric and sphericalized measure also supports a $1$-Poincare inequality.
 By  Theorem 7.11 in  \cite{HK},  there is some  $\epsilon>0$ and a constant $C$ depending only on
  $N$ and $\eta$    such that
  $$\left(\frac{1}{m(B)}\int_B \mu_f^{1+\epsilon} \;dm\right)^{\frac{1}{1+\epsilon}}\le C\left(\frac{1}{m(B)}\int_B \mu_f \;dm\right) $$
   for all balls $B$ in $\hat N$. Here    $\mu_f$ is the volume derivative  of $f$ defined  by:
    $$\mu_f(x)=\lim_{r\ra 0}\frac{m(f(B(x, r)))}{m(B(x,r))},$$
        which exists  and is finite   for  $m$-a.e.  $x\in \hat N$.
       Now for a measurable $E$,
        \begin{align*}
        m(f(E)\cap B_2) &=\int_{E\cap f^{-1}(B_2)} \mu_f\;dm \\
                & \le  \int_{E\cap B_1} \mu_f\;dm \\
         &=\int_{B_1} \mu_f\cdot \mathcal X_{E\cap B_1}\;dm \\
                   & \le \left(\int_{B_1} \mu_f^{1+\epsilon}\; dm\right)^{\frac{1}{1+\epsilon}} m(E \cap B_1)^{\frac{\epsilon}{1+\epsilon}}\\
                    &  \le   C m(B_1)^{-\frac{\epsilon}{1+\epsilon}}\left(\int_{B_1} \mu_f dm\right)m(E \cap B_1)^{\frac{\epsilon}{1+\epsilon}}\\
          & = C m(B_1)^{-\frac{\epsilon}{1+\epsilon}}  m(f(B_1)) m(E\cap B_1)^{\frac{\epsilon}{1+\epsilon}}.
          \end{align*}
            Hence,
             $$\frac{m(f(E)\cap B_2)}{m(f(B_1))} \le C \left(\frac{m(E\cap B_1)}{m(B_1)}\right)^{\frac{\epsilon}{1+\epsilon}}.$$
   Since $f$ is $\eta$-quasisymmetric, we have $f(B_1)\subset \eta(1) B_2$, where $\eta(1)B_2$ is the ball in $(\hat N, \hat d_{CC})$  with the same center as  $B_2$ and  with radius $\eta(1)$ times that of $B_2$.
       Now the Alhfors regularity of $\hat N$ implies
        $m(f(B_1))\le C_1 m(B_2)$ for some constant $C_1\ge 1$ depending only on  $\eta$ and the  constant in Alhfors regularity condition.
          The lemma follows.

 \end{proof}

 \begin{lemma}\label{holder}
 Let $\mathcal F$ be a compact family of $K$-quasiconformal maps  from $\hat N$ to $\hat N$.
        Then there exist positive constants $b'$, $b$, $a'$, $a$  depending only on $\mathcal F$ and $N$
    such that
    $$a' (m(E))^{b'}\le m(f(E))\le a (m(E))^b$$
     for all measurable $E\subset \hat N$ and all $f\in \mathcal F$.

 \end{lemma}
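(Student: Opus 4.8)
The plan is to bootstrap from Lemma \ref{uniformdensity}, which gives a pointwise-to-ball density estimate for a single quasisymmetric map, to a global estimate valid uniformly over the compact family $\mathcal F$. First I would record that, since $\mathcal F$ is a compact family of $K$-quasiconformal self-maps of the compact space $\hat N$, there is a single homeomorphism $\eta$ such that every $f\in\mathcal F$ is $\eta$-quasisymmetric; this is the standard passage from quasiconformal to quasisymmetric on an Ahlfors regular, Loewner (hence linearly locally connected) space (see Heinonen--Koskela, \cite{HK}), and the $\eta$ depends only on $N$ and $K$, hence only on $\mathcal F$ and $N$. The constants $a,b$ from Lemma \ref{uniformdensity} then depend only on $N$ and $\eta$, hence only on $\mathcal F$ and $N$.

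Next I would derive the upper bound $m(f(E))\le a\,(m(E))^b$ by applying Lemma \ref{uniformdensity} with $B_1$ a ball of radius comparable to $\mathrm{diam}(\hat N,\hat d_{CC})$ containing all of $\hat N$; more precisely, take $B_1=\hat N$ itself (a ball of radius $R_0:=\mathrm{diam}(\hat N)$ around any center). Then $B_2$, the largest ball in $f(B_1)=\hat N$ centered at $f(x)$, is again all of $\hat N$, so $m(B_1)=m(B_2)=m(\hat N)=:M_0$ is a fixed finite positive constant (by Ahlfors regularity of $(\hat N,\hat d_{CC},m)$). The inequality of Lemma \ref{uniformdensity} then reads
\[
\frac{m(f(E))}{M_0}\le a\left(\frac{m(E)}{M_0}\right)^b,
\]
i.e. $m(f(E))\le a\,M_0^{1-b}\,(m(E))^b$, which is the desired upper bound after renaming the constant $a$. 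Here I should be slightly careful that Lemma \ref{uniformdensity} is stated for genuine balls; if taking $B_1=\hat N$ literally is awkward, I would instead fix any $x_0$ and radius $R_0=\mathrm{diam}(\hat N)$, note $B(x_0,R_0)=\hat N$, and observe $f(B(x_0,R_0))=\hat N$ contains the ball $B(f(x_0),R_0)=\hat N$, so $B_2=\hat N$ and the same computation goes through.

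For the lower bound, I would apply the upper bound to the inverse maps: $\mathcal F^{-1}=\{f^{-1}:f\in\mathcal F\}$ is again a compact family of $K$-quasiconformal maps of $\hat N$ (inversion is continuous in the appropriate topology and preserves the quasiconformality constant), so by what was just proved there are constants $\tilde a>0$, $\tilde b>0$ depending only on $\mathcal F$ and $N$ with $m(f^{-1}(F))\le \tilde a\,(m(F))^{\tilde b}$ for all measurable $F$ and all $f\in\mathcal F$. Applying this with $F=f(E)$ gives $m(E)\le \tilde a\,(m(f(E)))^{\tilde b}$, hence $m(f(E))\ge (\tilde a^{-1}m(E))^{1/\tilde b}=a'\,(m(E))^{b'}$ with $a'=\tilde a^{-1/\tilde b}$ and $b'=1/\tilde b$, as claimed. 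I expect the only real point requiring care is the first step — producing a uniform $\eta$ for the whole family and confirming that all constants ultimately depend only on $\mathcal F$ and $N$ — since after that the argument is just a two-line specialization of Lemma \ref{uniformdensity} plus a duality trick with inverse maps. $\hfill\square$
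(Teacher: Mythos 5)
Your proof is correct, and it takes a genuinely different route from the paper's. The paper argues by covering: it fixes $r_0=1/10$, defines for each $x\in\hat N$ and $f\in\mathcal F$ nested radii $r''(x,f)$ (largest with $B(f(x),r'')\subset f(B(x,r_0))$) and $r'(x,f)$ (largest with $f(B(x,r'))\subset B(f(x),r'')$), uses compactness of $\hat N$ and $\mathcal F$ together with continuity in $(x,f)$ to extract a uniform positive infimum $r'_0$ and a finite supremum $r''_0$, covers $\hat N$ by finitely many balls of radius $r'_0$, applies Lemma \ref{uniformdensity} on each, and sums using Ahlfors regularity. You instead take $B_1=B_2=\hat N$ directly in Lemma \ref{uniformdensity}, treating the whole compact space as a ball of maximal radius; all the nested-ball inclusions become trivial and the upper bound drops out in one line, after which the lower bound is obtained by duality with $\mathcal F^{-1}$ exactly as in the paper. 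Your version is cleaner and, as you note, uses only the uniform $K$-quasiconformality (which yields a single $\eta$ via Heinonen--Koskela), not the compactness of $\mathcal F$ per se — the paper needs compactness to get the positive $r'_0$ for its covering. The one thing to be alert to is that your shortcut leans on the degenerate, whole-space case of the reverse H\"older inequality behind Lemma \ref{uniformdensity} (Theorem 7.11 of \cite{HK}); the paper's proof of that lemma does assert the inequality ``for all balls $B$ in $\hat N$'', and the closing quasisymmetric inclusion $f(B_1)\subset\eta(1)B_2$ is trivially satisfied when $B_1=B_2=\hat N$, so the statements as written do license your argument — but the authors' choice to work with small balls and a finite cover may reflect a preference to avoid relying on that degenerate case.
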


 \begin{proof}
 Since $\mathcal F$ is a compact family of  $K$-quasiconformal maps, there exists some
  homeomorphism  $\eta:[0, \infty)\ra [0, \infty)$
  such that every $f\in \mathcal F$ is  $\eta$-quasisymmetric.
    Let $r_0=1/10$.    For any $x\in \hat N$  and any $f\in \mathcal F$,   let $r''=r''(x,  f)>0$ be the   largest number  such that
     $B(f(x),  r'')\subset f(B(x, r_0))$.  Let $r'=r'(x,f)>0$  be the largest number such that
      $f(B(x, r'))\subset B(f(x), r'')$.
       By  Lemma \ref{uniformdensity}     the following holds for any  measurable $E\subset B(x, r')$:
        $$\frac{m(f(E))}{m(B(f(x), r''))}\le a\left(\frac{m(E)}{m(B(x, r_0))}\right)^b.$$

      Note that $r''$ and $r'$ are continuous functions of $x,  f$.  Since $\hat N$ and $\mathcal F$ are compact,
       we have $$r'_0=\inf_{x\in \hat N, f\in \mathcal F} r'(x,  f)>0.$$
        Also set
           $$r''_0=\sup_{x\in \hat N, f\in \mathcal F} r''(x,f)\le {\text{diam}}(\hat N).$$
         Now the space $\hat N$ can be covered by a finite number of balls  $\{B(x_j, r'_0)\}_{j=1}^k$ with
          radius $r'_0$.   Let $E\subset \hat N$ be any measurable subset.
           Then $f(E)=\cup_{j=1}^k f(E\cap B(x_j, r'_0))$.  By using the Alhfors regularity of the metric measure space
            $(\hat N, \hat d, m)$ we obtain:
            \begin{align*}
            m(f(E)) & \le \sum_{j=1}^k a\left(\frac{m(E\cap B(x_j, r'_0))}{m(B(x_j, r_0))}\right)^b m(B(f(x_j), r''_0))\\
              & \le   \sum_{j=1}^k a \frac{C {r''_0}^Q}{(r_0^Q/C)^b}m(E\cap B(x_j, r'_0))^b\\
              & \le   \frac{kaC {r''_0}^Q} {(r_0^Q/C)^b}  m(E)^b.
            \end{align*}
            This establishes the second inequality in the lemma. The first inequality  holds since the family $\mathcal F^{-1}=\{f^{-1}| f\in \mathcal F\}$ is also a compact family of $K$-quasiconformal maps.

 \end{proof}

 In the next lemma we return to working with Carnot-Caratheodory metric and Lebesgue measure on $N$.

 \begin{lemma}\label{inmeasure}
 Let $\{f_j: \hat N\ra \hat N\}$ be a sequence of  $K$-quasiconformal maps that converge uniformly to  a
     quasiconformal map $f: \hat N\ra \hat N$.  Suppose for any compact subset
   $F\subset \hat N$ satisfying $\infty,
    f^{-1}(\infty)  \notin F$, and any $\epsilon>0$,
    $|\{x\in F: K(f_j, x)\ge 1+\epsilon\}|\ra 0$ as $j\ra \infty$.
      Then
     $f$ is  conformal.

 \end{lemma}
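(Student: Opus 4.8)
The plan is to verify the hypothesis of Lemma~\ref{capacity1qc} for the limit map $f$, i.e.\ that $C(R)=C(f(R))$ for every ring $R$ with $\infty,f^{-1}(\infty)\notin\bar R$, and then quote that lemma. Since $f$ is a homeomorphism of the compact space $\hat N$, the inverses $f_j^{-1}$ converge uniformly to $f^{-1}$, again form a sequence of $K$-quasiconformal maps, and (using $K(f_j^{-1},f_j(x))=K(f_j,x)$ a.e.\ together with Lemma~\ref{holder} applied to the compact family $\{f_j\}_j\cup\{f\}$) satisfy the analogous measure hypothesis with $f^{-1}(\infty)$ replaced by $f(\infty)$. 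Hence it suffices to prove the one-sided bound
\[
(\star)\qquad C(R)\le C(f(R))
\]
for all valid rings $R$: applying $(\star)$ to the sequence $\{f_j^{-1}\}$ and taking the ring there to be $f(R)$ gives $C(f(R))\le C(R)$, so $(\star)$ for both sequences yields equality.

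To prove $(\star)$, fix a valid ring $R$ and $\eta>0$, and choose a smooth admissible function $u_0$ for $f(R)$ with $\int_N|\triangledown u_0|^Q\,dx\le C(f(R))+\eta$ that is locally constant (equal to $0$ or $1$) off a compact set; then $\triangledown u_0$ is supported in a fixed compact $K_0\subset N$, and $u_0$ equals $0$ near one boundary component of $f(R)$ and $1$ near the other. Using $f_j\to f$ and $f_j^{-1}\to f^{-1}$ uniformly, for all large $j$ one has $f_j^{-1}(\infty)\notin\bar R$, the function $u_0$ is admissible for the ring $f_j(R)$ so that $u_0\circ f_j$ is a generalized admissible function for $R$, and the compact set $f_j^{-1}(K_0)$ lies in a fixed compact $F\subset N$ with $\infty,f^{-1}(\infty)\notin F$. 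From the chain rule for Pansu differentials, the a.e.\ inequalities $L_{f_j}(x)^Q=K(f_j,x)^Q l_{f_j}(x)^Q\le K(f_j,x)^Q f_j'(x)$, and absolute continuity of $f_j$, I would estimate
\[
C(R)\le\int_N|\triangledown(u_0\circ f_j)|^Q\,dx\le\int_F K(f_j,x)^Q f_j'(x)\,|\triangledown u_0(f_j(x))|^Q\,dx.
\]
Splitting $F=A_j\cup B_j$ with $A_j=\{x\in F:K(f_j,x)<1+\epsilon\}$, the part over $A_j$ is at most $(1+\epsilon)^Q\int_N|\triangledown u_0|^Q\,dy\le(1+\epsilon)^Q\bigl(C(f(R))+\eta\bigr)$, while the part over $B_j$ is at most $K^Q\|\triangledown u_0\|_\infty^Q\,|f_j(B_j)|$.

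It remains to show $|f_j(B_j)|\to 0$. The measure hypothesis (applied to the compact set $F$) gives $|B_j|\to 0$; since $B_j\subset F$ and the sets $f_j(F)$ lie in a fixed bounded set for large $j$, comparability of the sphericalized measure with Lebesgue measure on bounded subsets of $N$, together with the uniform H\"older-type estimate of Lemma~\ref{holder} for the compact family $\{f_j\}_j\cup\{f\}$, gives $|f_j(B_j)|\le C\,m(f_j(B_j))\le C'(m(B_j))^b\le C''|B_j|^b\to 0$. Letting $j\to\infty$ and then $\epsilon,\eta\to 0$ yields $C(R)\le C(f(R))$, which is $(\star)$; Lemma~\ref{capacity1qc} then finishes the proof.

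The substantive work is not the capacity estimate itself but two bookkeeping steps: (i) arranging that the region where $u_0\circ f_j$ is non-constant stays inside a fixed compact set avoiding $f^{-1}(\infty)$, so that the measure hypothesis is actually applicable there; and (ii) upgrading ``$|B_j|\to 0$'' to ``$|f_j(B_j)|\to 0$'' with a bound uniform in $j$, which is precisely the role of Lemma~\ref{holder}. A third, routine, ingredient is checking that the measure hypothesis passes to the inverse sequence $\{f_j^{-1}\}$, so that both inequalities between $C(R)$ and $C(f(R))$ reduce to the single bound $(\star)$.
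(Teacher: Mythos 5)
Your proof is correct and follows the same strategy as the paper's: reduce to verifying the hypothesis of Lemma~\ref{capacity1qc}, compose a near-minimizing admissible function with $f_j$, split the domain into a good set where $K(f_j,\cdot)<1+\epsilon$ and a small bad set $B_j$ whose image measure is controlled via Lemma~\ref{holder}, and obtain the reverse inequality by running the argument for the inverse sequence. The only cosmetic difference is that the paper first passes to a shrunken ring $\tilde R_a\subset f(R)$ (so that $f_j(R)\supset\tilde R_a$ for large $j$) and then separately shows $C(\tilde R_a)\to C(f(R))$ as $a\to 0$, whereas you work directly with a near-minimizer for $f(R)$ that is locally constant near $\partial f(R)$ and invoke uniform convergence to make $u_0\circ f_j$ admissible for $R$ for large $j$; both versions rest on exactly the same technical fact (existence of smooth near-minimizers with locally constant boundary values), so this is a streamlining rather than a different route.
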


 \begin{proof}
      We shall show that $f$ satisfies the   assumption of    Lemma   \ref{capacity1qc}.
  Let $R$ be a ring in $\hat N$ satisfying $\infty,
  f^{-1}(\infty)\notin \bar R$.
 Let $C_0$ and $C_1$ be the  two  components of $\partial R$.
 For $a>0$, let
  $$\tilde R_a=\{z\in f(R): B(z,a)\subset f(R)\}.$$
   Then     $f_j(R)\supset  \tilde R_a$  for $j$ sufficiently large (depending on $a$).

   Let  $\epsilon>0$.    Let $\tilde u_a$ be admissible for $\tilde R_a$ so that
    $$\int_{\tilde R_a} |\triangledown\tilde u_a|^Q \; dx\le C(\tilde R_a)+\epsilon.$$
      One can assume  $\tilde u_a$ is smooth and is $0$ on a neighborhood of $C_0$ and $1$ on a neighborhood of $C_1$.
        Extend $\tilde u_a$ to $\hat N$ by continuity and being constant on the two components of  $\hat N\backslash \tilde R_a$.
         Then clearly
           $$\int_{\tilde R_a} |\triangledown\tilde u_a|^Q \; dx=\int_{\tilde R}|\triangledown \tilde u_a|^Q \; dx$$
            for $\tilde R\supset \tilde R_a$.

   Let $u_j=\tilde u_a\circ f_j$.    Then $u_j$ is  admissible for $f_j^{-1}(\tilde R_a)$ and hence also admissible for $R$.
     The chain rule (see Lemma 3.7 in \cite{CC06}) implies $Du_j(x)=D \tilde u_a(f_j(x))\circ Df_j(x)$ for a.e. $x\in R$.  Hence
     $Du_j(x)|_{V_1}=D \tilde u_a(f_j(x))|_{V_1}\circ Df_j(x)|_{V_1}$
      and
       $$|Du_j(x)|_{V_1}|\le |D \tilde u_a(f_j(x))|_{V_1}|\cdot  |Df_j(x)|_{V_1}|.$$
            Notice that $|Du_j(x)|_{V_1}|=|\triangledown u_j(x)|$,
        $|D\tilde u_a(f_j(x))|_{V_1}|=  |\triangledown \tilde u_a(f_j(x))|   $  and
        $$|Df_j(x)|_{V_1}|^Q=L^Q_{f_j}(x)=l^Q_{f_j}(x) K^Q(f_j, x)\le  f'_j(x) K^Q(f_j, x)$$
           due to Lemma 3.3  and (4.1) in  \cite{CC06}.
          It follows that
   $$ |\triangledown u_j(x)|^Q\le K^Q(f_j, x) {f'_j(x)} |\triangledown \tilde u_a(f_j(x))|^Q.$$

   Set $E_j=\{x\in R:  K(f_j, x)\ge 1+\epsilon\}$.
     The assumption that $\infty,
     f^{-1}(\infty)\notin \bar R$ implies that
        there are compact subsets $K_1,  K_2\subset N$ with $\bar R\subset \mathring K_1$ and $\overline{f(R)}\subset \mathring  K_2$.
     There is a constant $C\ge 1$ such that $m(E)/C\le |E|\le C \,m(E)$
       for any $E\subset K_1 $ or $E\subset   K_2$.    By assumption $|E_j|\ra 0$ as $j\ra +\infty$.
        Now Lemma   \ref{holder}     implies $|f_j(E_j)|\ra 0$.

        We have
        \begin{align*}
        C(R)  &  \le  \int_R |\triangledown  u_j|^Q\; dx\\
        & \le \int_R K^Q(f_j, x) {f'_j}(x) |\triangledown \tilde u_a(f_j(x))|^Q\; dx\\
        & \le (1+\epsilon)^Q\int_{f_j(R)} |\triangledown \tilde u_a|^Q\; dx +K^Q\int_{f_j(E_j)}|\triangledown \tilde u_a|^Q\; dx.
        \end{align*}
         Since  $|f_j(E_j)|\ra 0$ as $j\ra +\infty$,   the second term above goes to $0$ as $j\ra \infty$.
           Hence
          $$C(R)\le (1+\epsilon)^Q(C(\tilde R_a)+\epsilon),$$
          and so $C(R)\le C(\tilde R_a)$ for small $a$.  We claim that $C(\tilde R_a)\ra C(f(R))$ as $a\ra 0$.
           Indeed,  for any $\delta>0$    there is a  smooth function $v$,  admissible for  $f(R)$ with value 0 in a neighborhood of $f(C_0)$  and value 1 in a neighborhood of $f(C_1)$,  such that
            $$\int_{f(R)} |\triangledown v|^Q\; dx\le C(f(R))+\delta.$$
             For small enough $a$, $v$ is also admissible for  $\tilde R_a$ and
              $$\int_{f(R)} |\triangledown v|^Q\; dx=\int_{\tilde R_a} |\triangledown v|^Q\; dx.$$
               Since
               $$\int_{\tilde R_a} |\triangledown v|^Q\; dx\ge C(\tilde R_a)\ge C(f(R))\ge         \int_{f(R)} |\triangledown v|^Q\; dx-\delta$$
                and $\delta$ is arbitrary,
            we have     $C(\tilde R_a)\ra C(f(R))$ as $a\ra 0$. Hence                $C(R)\le C(f(R))$.

                Finally  as $K(f_j^{-1}, f_j(x))=K(f_j, x)$,   Lemma \ref{holder} implies that $f_j^{-1}$, $f^{-1}$ also satisfy the assumption in the lemma and   we conclude that  $C(f(R))\le C(R)$.
                   Now we have verified the assumption   of    Lemma \ref{capacity1qc}
                     and so $f$ is conformal.

 \end{proof}

 \section{Tukia-type  theorem for Carnot groups}

In this section we prove Theorem \ref{tukia} and Corollary \ref{quasiaction}.

\subsection{Existence of invariant measurable conformal structure}\label{conformalstructure}

In this subsection we  show that every
   uniform quasiconformal group of $\hat N$ leaves invariant a measurable conformal structure on
      $\hat N$.

  Let  $m\ge 2$  be an integer and   $X$     the space of symmetric, positive definite   real  $m\times m$ matrices with determinant $1$.
 The  general linear  group $GL(m, \mathbb R)$ acts on $X$ by
  $$M[S]=  (\det M)^{-\frac{2}{m}}MSM^T$$
      for $M\in GL(m,\mathbb R)$ and $S\in X$,   where $M^T$ is the transpose of $M$.
   Then $SL(m, \mathbb R)\subset GL(m,\mathbb R)$
        acts on $X$ transitively,    and the
    stabilizer of  $SL(m, \mathbb R)$ at  $I_m$ is $SO(m)$.  Hence  $X=SL(m,  \mathbb R)/SO(m)$.
         The homogeneous manifold  $X=SL(m,  \mathbb R)/SO(m)$
          is a symmetric space of non-compact type  (see table V on page 518 of \cite{Hel78})   and so has nonpositive sectional curvature.

          The Riemannian distance $d$ on $X$ is invariant under the action of $GL(m, \mathbb R)$  and satisfies:
            $$d(I_m, ODO^T)=\sqrt{a_1^2+\cdots +a_m^2},$$
            where $I_m$  is the identity matrix, $O$ is orthogonal and    $D$ is diagonal with diagonal entries  $e^{a_1}\ge \cdots \ge e^{a_m}$,
             see \cite{Ma}, p.27.   We notice that the  dilatation of  a non-singular  linear map $A:=UDV^T: \mathbb R^m\ra \mathbb R^m$ (where $D$ is as above and $U,V$ are orthogonal)
             is given by:
                  $$ K(A):=K(A, 0)=\frac{\max\{|AX|: |X|=1\}}{\min\{|AX|: |X|=1\}}=\frac{e^{a_1}}{e^{a_m}}=e^{a_1-a_m}.   $$
              It follows that there is a function $\phi: [0, \infty)\ra [0, \infty)$  (one may choose $\phi(t)=e^{t}-1$) with $\phi(t)\ra 0$ as $t\ra 0$ such that
               \begin{equation}\label{K(A)}
               K(A)\le 1+ \phi(d(I_m, AA^T)).\end{equation}

Let $N$ be a Carnot group with  Lie algebra $\mathfrak n=V_1\oplus \cdots \oplus V_r$.
Let $V_1$ be equipped with an  inner product.  Let $m=\dim(V_1)$.  By using an orthonormal basis for $V_1$,
 we can identify   the special orthogonal group  $SO(V_1)$  associated with the inner product with $SO(m, \mathbb R)$ and similarly identify $SL(V_1)$ with $SL(m, \mathbb R)$.
     Hence
      $$X=SL(V_1)/O(V_1).$$

 A   measurable conformal structure on $\hat N=N\cup\{\infty\}$ is an essentially bounded
  measurable map
 $$\mu: U \ra     X$$
  defined on a full measure subset $U\subset N$.     Note we do not require $\mu$ to be defined everywhere on $\hat N$. In particular, we prefer $\mu$ not defined at $\infty$.

 For a  quasiconformal map $f: \hat N\ra \hat N$ and a measurable conformal structure $\mu$ on $\hat  N$, the pull-back
  measurable
  conformal structure
  $f^*\mu$ is defined  by:
   $$f^*\mu (x)=(Df(x)|_{V_1})^T[\mu(f(x))], \;\;\;\text{for a.e.}\; x\in N.$$
    Here we are using the fact that a quasiconformal map is Pansu differentiable a.e.
      This is similar to the definition of pull-back  of a Riemannian metric under a smooth map.
     It is easy to check that
       $(g\circ f)^*\mu=f^*(g^*\mu)$,   where  $f, g : \hat N\ra \hat N$ are quasiconformal maps   and $\mu$ is a
        measurable conformal structure on $\hat N$.

         A quasiconformal map $f$ is called conformal with respect to the measurable  conformal structure $\mu$
            if $f^*\mu=\mu$;   in other words,   $\mu(x)=(Df(x)|_{V_1})^T[\mu(f(x))]$
              for a.e. $x\in N$.

     \begin{Prop} \label{existence}
      Let $G$ be a   
       uniform quasiconformal   group
     of $\hat  N$.   Then there is a      measurable conformal structure on $\hat N$
       such that every $g\in G$ is conformal with respect to $\mu$.

     \end{Prop}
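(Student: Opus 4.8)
The plan is to average a measurable conformal structure over the group $G$, using the nonpositive curvature of $X = SL(V_1)/SO(V_1)$ to make the averaging procedure well-defined. First I would observe that since $G$ is uniformly $K$-quasiconformal, for each $g \in G$ the Pansu differential $Dg(x)|_{V_1}$ is defined for a.e.\ $x$, and by the bound \eqref{K(A)} together with the compatibility of the $X$-action, pulling back the trivial conformal structure $\mu_0(x) \equiv I_m$ by any $g$ produces a map $g^*\mu_0$ whose values lie within a bounded neighborhood of $I_m$ in $(X,d)$, with the bound depending only on $K$ (since the dilatation of $Dg(x)|_{V_1}$ is at most $K$, its image $g^*\mu_0(x) = (Dg(x)|_{V_1})^T[I_m]$ satisfies $d(I_m, g^*\mu_0(x)) \le \phi^{-1}(K-1)$ or an analogous explicit estimate). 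Thus the family $\{g^*\mu_0 : g \in G\}$ is a family of measurable maps from (full-measure subsets of) $N$ into a fixed bounded ball $\bar B \subset X$.

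Next I would use the key fact that a bounded subset of a Hadamard manifold (complete, simply connected, nonpositively curved) has a well-defined \emph{circumcenter}: for any bounded set $S \subset X$ there is a unique point $c(S) \in X$ minimizing $\sup_{s \in S} d(\cdot, s)$, and $c$ is equivariant under isometries of $X$. Applying this pointwise, I would define
$$\mu(x) = c\big(\{\, (Dg(x)|_{V_1})^T[\mu_0(g(x))] \,:\, g \in G\,\}\big) = c\big(\{ g^*\mu_0(x) : g \in G\}\big)$$
for a.e.\ $x \in N$. One must check this is measurable (the set-valued map $x \mapsto \{g^*\mu_0(x)\}$ varies measurably, and the circumcenter of a measurably-varying bounded family is measurable) and essentially bounded (it stays in $\bar B$). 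This defines a measurable conformal structure on $\hat N$ in the sense required, not defined at $\infty$.

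Finally I would verify $G$-invariance. For $h \in G$, the cocycle identity $(g \circ h)^*\mu_0 = h^*(g^*\mu_0)$ gives, at a.e.\ $x$, that $h^*\mu$ at $x$ is obtained by applying the linear isometry $(Dh(x)|_{V_1})^T$ (acting on $X$ via the $[\cdot]$-action) to the set $\{g^*\mu_0(h(x)) : g \in G\}$, and this set equals $\{(gh)^* \mu_0 \text{ "seen from" } x\}$; since $g \mapsto gh$ permutes $G$, the set $\{g^*\mu_0(x) : g \in G\}$ is sent to itself. Because the circumcenter is equivariant under isometries of $X$, we get $h^*\mu(x) = \mu(x)$ a.e., i.e.\ $h$ is conformal with respect to $\mu$. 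The main obstacle I anticipate is the measurability bookkeeping: one must carefully set up the domain (a $G$-invariant full-measure subset of $N$ on which every $Dg$ exists and is a graded automorphism), confirm the circumcenter construction depends measurably on $x$, and handle the fact that $G$ may be uncountable (one can reduce to a countable dense subgroup in an appropriate topology, or invoke that the function $x \mapsto \sup_{g} d(\,\cdot\,, g^*\mu_0(x))$ is already determined by countably many $g$). The curvature input and the cocycle computation are otherwise routine.
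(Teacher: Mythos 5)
Your proposal is correct and follows essentially the same route as the paper: take the $G$-orbit $\{g^*\mu_0(x)\}$ of a constant conformal structure, use that it is a bounded subset of the nonpositively curved symmetric space $X=SL(m)/SO(m)$, define $\mu(x)$ as its circumcenter, and deduce $G$-invariance from equivariance of the circumcenter together with the cocycle identity $(gf)^*\mu_0=f^*(g^*\mu_0)$. The paper handles the measurability/uncountability bookkeeping you flag exactly as you suggest, by first treating a countable (dense) subgroup and using continuity of the circumcenter with respect to Hausdorff convergence.
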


 \begin{proof}   We first  assume $G$ is countable.
  Since $G$ is countable and for each $g\in G$,  the Pansu differential $Dg(x)$ exists and is a graded automorphism for  a.e. $x\in N$,
   there is      a measurable, $G$-invariant subset $U$ of full measure such that   $Dg(x)$ exists and is a graded automorphism
        for all $g\in G$ and  at all $x\in U$.
     Let $\mu_0$ be the  left invariant conformal   structure  on $N$ associated with an inner product  on $V_1$
       (so $\mu_0: N\ra X$ is a constant map).
       For each $x\in U$, set
        $M_x=\{g^*\mu_0(x): g\in G$\}.
          Since $G$ is a uniform quasiconformal group,  $M_x$ is a bounded subset of $X$.
        The assignment $x \mapsto M_x$ is $G$-invariant:   for any $f\in G$,
         $$f^* M_{f(x)}=f^*\{g^*\mu_0(f(x)): g\in G\}=\{{(g\circ f)}^*\mu_0(x):  g\in G\}=M_x.$$
           Since $M_x$ is a  bounded subset of the non-positively curved symmetric space
              $X$,  there exists a unique circumcenter  $P(M_x)$  in  $X$  for the subset $M_x$, see for example  p.179 of  \cite{BH}.
                  Define $\mu:  U\ra X$ by  $\mu(x)=P(M_x)$.      Since the assignment $x\mapsto M_x$ is $G$-invariant,
                  $\mu$ is also $G$-invariant.

                  It remains to show $\mu$ is measurable.   Enumerate $G=\{g_0, g_1, \cdots\}$, and let
                   $M(x,j)=\{{g_i^*\mu_0}: i\le j\}$ and $\mu_j(x)=P(M(x, j))$.    Now, $Y\mapsto P(Y)$ is   continuous with respect to the
                   Hausdorff metric (see p.334 of \cite{T86}).    This implies  $\mu_j$ is measurable.  Since $\mu_j$ converges to $\mu$ point-wise,  $\mu$ is  also measurable.
 This completes the proof when $G$ is countable.

 Now let $G$ be a  general   uniform quasiconformal group of $\hat N$.  Let $G_0\subset G$ be a countable subgroup of $G$ that is dense in the topology of uniform convergence. By the above argument, there is a   $G_0$-invariant   measurable conformal structure $\mu$.    It follows that $\mu$ is also  $G$-invariant   as a  uniform limit of a sequence of $\mu$-conformal maps is $\mu$-conformal.   This  follows from  an analogue of Theorem D in \cite{T86}  in the setting of Carnot groups.   The proof of Theorem D in \cite{T86} is valid for Carnot groups   after some small modifications: to blow up maps in the Carnot group setting one needs to conjugate using Carnot dilations.

 \end{proof}

 Let $G_0$ be a countable dense subgroup of $G$ as above.  If the induced action of $G$ on the space of distinct triples of $\hat N$ is cocompact, then the same is true for the induced action of $G_0$.  Another way to handle the case of an uncountable group $G$ is to first run the argument from the next subsection to conjugate $G_0$ into the conformal group, then the same  map also conjugates $G$ into the conformal group   since the limits of conformal maps are conformal maps.

 \subsection{Conjugating into the conformal group}\label{conjugate}

In this subsection we will prove Theorem \ref{tukia}    and Corollary \ref{quasiaction}.

A map $\mu: U\ra Y$, where $U\subset N$ is open and $Y$ is a metric space, is called approximately continuous at $x_0\in U$
   if for any $\epsilon>0$,   the set  $\mu^{-1}(B(\mu(x_0), \epsilon))$ has density 1 at $x_0$; that is, if
    $$\frac{|B(x_0, r)\cap \mu^{-1}(B(\mu(x_0), \epsilon))|}{|B(x_0, r)|}\ra 1$$
     as $r\ra 0$.        By  Theorem 2.9.13 in \cite{F69},    if $Y$ is separable and $\mu$ is measurable, then $\mu$ is approximately continuous a.e.

     Let $N$ be  a Carnot group and $S=N\rtimes \R$ the negatively curved homogeneous manifold associated with
      $N$.
          There is a natural map    $\pi: T(\partial S) \ra S$,  that assigns to each  distinct triple
     $(\xi_1, \xi_2, \xi_3)\in T(\partial S)$ the center of  the triple.  To be more precise,
          $\pi(\xi_1, \xi_2, \xi_3)$ is defined to be the orthogonal projection of $\xi_3$ onto the complete geodesic
           $\xi_1\xi_2$.       We observe that for any compact $C\subset S$, the set $\pi^{-1}(C)$ is compact in
            $T(\partial S)$.

       Let $G$ be a group of homeomorphisms of $\partial S$.    Then $G$ also acts diagonally on $T(\partial S)$:
         $g(\xi_1, \xi_2,\xi_3)=(g(\xi_1), g(\xi_2), g(\xi_3))$.       Recall that a point $\xi\in \partial S$ is said to be  a  radial limit point of
            $G$ if   there  exists
            a sequence of elements $\{h_j\}_{j=1}^\infty$ of $G$  with the following property:
                           for any  triple $T=(\xi_1, \xi_2, \xi_3)\in
             T(\partial S)$,
             and       any   complete  geodesic $\gamma$    asymptotic to $\xi$,
              there exists a constant $C>0$          with           $\pi(h_j(T))\ra \xi$  and   $d(\pi(h_j(T)), \gamma)\le C$.

     We  recall  that each inner product on $V_1$ determines   a left invariant Carnot-Caratheodory metric on $N$.

     \begin{theorem}\label{radial}
     Let $G$ be a uniform quasiconformal group of $\hat N$  and $\mu$ a $G$-invariant  measurable conformal
     structure on $\hat N$.    Suppose there is a point $p\in N\subset \hat N$ such that $\mu$ is approximately continuous
      at $p$ and $p$ is also a radial limit point for $G$.     Then there exists a quasiconformal map $f:\hat N\ra \hat N$
        and an inner product on $V_1$    such that $fGf^{-1}$ consists of conformal maps   with respect to
           the left invariant Carnot-Caratheodory metric determined by this inner product.

     \end{theorem}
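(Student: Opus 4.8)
The plan is to exploit the radial limit point hypothesis at $p$ together with approximate continuity of $\mu$ at $p$ to produce, via a blow-up argument at $p$, a uniform quasiconformal group which leaves invariant an honest (constant, i.e.\ left-invariant) conformal structure, and hence consists of conformal maps for a suitable left invariant Carnot--Caratheodory metric. First I would normalize: after composing with a left translation of $N$ we may assume $p=0$, and after composing with an automorphism (the square root of the matrix $\mu(0)\in X$ acting on $V_1$, extended gradedly to $\mathfrak n$) we may assume $\mu(0)=I_m$, i.e.\ $\mu(0)$ is the conformal structure $\mu_0$ associated with a fixed inner product on $V_1$. Because $0$ is a radial limit point, there is a sequence $\{h_j\}\subset G$ whose associated centers $\pi(h_j(T))$ converge to $0$ while staying within bounded distance of a fixed vertical geodesic; translating this back to $N$, this says precisely that there are elements $a_j\in N$ and scales $t_j\to +\infty$ such that the renormalized maps $\phi_j:=\delta_{e^{t_j}}\circ L_{a_j^{-1}}\circ h_j$ (equivalently $L_{a_j^{-1}}\circ h_j$ followed by a Carnot dilation bringing the relevant ball back to unit size) have uniformly controlled behavior near $0$ and subconverge, by the compactness properties of uniformly quasiconformal (equivalently $\eta$-quasisymmetric) families, to a quasiconformal map $f:\hat N\to\hat N$.

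The key point is then to identify $f^*\mu$. Since $\mu$ is approximately continuous at $0$ with value $\mu_0$, and since $f$ is obtained by blowing up at $0$ using left translations and Carnot dilations — maps which are conformal for $\mu_0$ and hence act on conformal structures in a way compatible with the blow-up — one shows that the pulled-back structures $\phi_j^*\mu$ converge \emph{in measure} on compact subsets to the constant structure $\mu_0$ (this is where approximate continuity is used: the density-$1$ set where $\mu$ is $\epsilon$-close to $\mu_0$, when rescaled by $\delta_{e^{t_j}}$, exhausts full measure in any fixed ball, because Carnot dilations are measure-quasi-preserving with the precise scaling $|\delta_t(A)|=t^Q|A|$). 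On the other hand $G$-invariance of $\mu$ gives $h_j^*\mu=\mu$, so $\phi_j^*\mu$ is, up to the conformal normalizing maps, just $\mu$ again at a different location; combining, one gets $f^*\mu_0=\mu_0$ a.e., i.e.\ $f$ is conformal for $\mu_0$. Wait — more carefully, what one actually wants is that the \emph{conjugated} group $fGf^{-1}$ preserves $\mu_0$: for $g\in G$, the map $fgf^{-1}$ should satisfy $(fgf^{-1})^*\mu_0=\mu_0$. One derives this by the same renormalization applied to $h_jgh_j^{-1}$, or equivalently by noting $f^*\mu_0 = \mu_0$ and $g^*\mu=\mu$ chase through: set $\nu:=(f^{-1})^*\mu_0$; then $\nu$ is $G$-invariant (since $\mu$ is and $f^{-1}$ intertwines appropriately) and equals $\mu$ up to the blow-up limit, and the point is that $\nu=\mu_0$ forces each $g$ conjugated by $f$ to be $\mu_0$-conformal.

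Once $fGf^{-1}$ preserves the constant conformal structure $\mu_0$ associated to an inner product $\langle\cdot,\cdot\rangle$ on $V_1$, I would invoke the pointwise description of $L_g,l_g$ in terms of the Pansu differential restricted to $V_1$ from Section~\ref{preli}: $\mu_0$-conformality means $Dg(x)|_{V_1}$ is a conformal linear map (a scalar times an orthogonal map) for a.e.\ $x$, hence $K(g,x)=1$ a.e., hence each $g\in fGf^{-1}$ is $1$-quasiconformal with respect to the left invariant Carnot--Caratheodory metric $d$ determined by $\langle\cdot,\cdot\rangle$ — that is, conformal for $d$. This is exactly the conclusion sought, with the inner product on $V_1$ being the one just produced.

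The main obstacle I expect is the convergence-in-measure step: justifying that $\phi_j^*\mu\to\mu_0$ in measure on compacta. This requires (i) uniform control, independent of $j$, on the derivatives $D\phi_j$ so that a small set where $\mu\circ\phi_j$ is far from $\mu_0$ cannot be amplified in the pull-back — this is where Lemma~\ref{holder} (uniform Hölder control on how the $K$-quasiconformal maps $\phi_j$ distort measure) is essential; and (ii) matching up the blow-up scales $t_j$ with the radial escape of $\pi(h_j(T))$ so that the approximate-continuity density-$1$ neighborhood of $0$ really does fill up the unit ball after rescaling. Handling the basepoint at $\infty$ (ensuring none of the $\phi_j^{-1}(\infty)$ or $f^{-1}(\infty)$ accumulate in the region of interest) via the radial limit point condition and the properness of $\pi$ is a secondary technical nuisance but follows the pattern of Tukia's original argument in \cite{T86}.
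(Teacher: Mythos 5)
Your overall strategy (blow up at the radial limit point $p$, use the approximate continuity of $\mu$ at $p$, and obtain a constant conformal structure in the limit) matches the paper's, and the derivation of $\mu_j:=(f_j^{-1})^*\mu$ converging in measure to $\mu(0)$ is essentially the right computation. However, there is a genuine gap at the heart of your argument, and it is precisely the nontrivial technical ingredient of the proof.

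You claim that the convergence $\mu_j\to\mu_0$ in measure, together with $f_j\to f$ uniformly, gives ``$f^*\mu_0=\mu_0$'' (and aside: what you actually want here is $(f^{-1})^*\mu=\mu_0$, i.e.\ $f^*\mu_0=\mu$, not $f^*\mu_0=\mu_0$; the chain $(fgf^{-1})^*\mu_0=(f^{-1})^*g^*f^*\mu_0=(f^{-1})^*g^*\mu=(f^{-1})^*\mu=\mu_0$ is the one you need). But even with the statement corrected, the implication does not follow: $\mu_j=(f_j^{-1})^*\mu$ is defined through the Pansu differentials of $f_j$, and these differentials do \emph{not} pass to the limit under uniform convergence of the maps. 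So $\mu_j\to\mu_0$ in measure tells you nothing directly about $(f^{-1})^*\mu$ or $f^*\mu_0$, and hence nothing directly about whether elements of $fGf^{-1}$ are $\mu_0$-conformal. The paper sidesteps this by never identifying $f^*\mu_0$ at all. Instead, it works with the conjugates $\tilde g_j=f_jgf_j^{-1}$ (which \emph{are} $\mu_j$-conformal, since $\mu$ is $G$-invariant), shows via the approximate continuity and Lemma~\ref{holder} that the pointwise dilatations $K(\tilde g_j,\cdot)$ are $\le 1+\phi(2\epsilon)$ outside sets of vanishing measure, and then invokes Lemma~\ref{inmeasure}: if a sequence of $K$-quasiconformal maps converges uniformly to a quasiconformal map and the pointwise dilatations go to $1$ in measure on compacta, the limit is conformal. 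That lemma is a capacity-theoretic statement (built on Lemma~\ref{capacity1qc} and the Markina modulus\,$=$\,capacity result) and is exactly what makes the ``pass to the limit'' step legitimate; your sketch does not invoke it or anything equivalent.

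Two smaller points. Your allocation of Lemma~\ref{holder} is slightly off: the convergence $\mu_j\to\mu_0$ in measure is actually the easy part (it is immediate from $\mu_j(q)=\mu(\delta_{s_j}^{-1}q)$ and approximate continuity), whereas Lemma~\ref{holder} is needed later, to ensure that $\tilde g_j^{-1}$ does not inflate the small exceptional sets $A_j$. Also, your normalization ``compose with the square root of $\mu(0)$ extended gradedly to $\mathfrak n$'' is not available in general: an element of $GL(V_1)$ need not extend to a graded automorphism of a Carnot Lie algebra. Fortunately this step is unnecessary — one simply takes the left invariant Carnot--Caratheodory metric determined by the inner product $\mu(0)$ itself.
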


     \begin{proof}
       The left translation action of $S=N\rtimes \R$ on itself is by isometries.
       The  elements in $\R$ translate the vertical geodesic above the origin $0$ in $N$  and
          the boundary homeomorphisms induced  by them are the standard Carnot group dilations of $N$. We shall
            use $\tilde \delta_t$ to denote  the isometry of $S$ that induces the standard Carnot group dilation $\delta_t$.

      We may assume $p=0$ is the origin of $N$.
              Fix a triple $T\in T(\partial S)$  and
               let $\gamma$ be the vertical geodesic above $0$.
              Since $0$ is a radial limit point of $G$, there exists
            a sequence of elements $\{h_j\}_{j=1}^\infty$ of $G$    and
              a constant $C>0$          with           $\pi(h_j(T))\ra 0$  and   $d(\pi(h_j(T)), \gamma)\le C$.
               Fix a point $x_0\in \gamma$.
              For each $j$ there is some $s_j>0$ with $s_j\ra +\infty$ as $j\ra \infty$ such that
               $d(\tilde \delta_{s_j}\circ \pi\circ h_j(T), x_0)\le C$.
                Since $\tilde \delta_{s_j}$ is an isometry of $S$, we have $\tilde \delta_{s_j}\circ \pi=\pi\circ \delta_{s_j}$.
                  Hence $d(\pi\circ \delta_{s_j}\circ h_j(T), x_0)\le C$  and  so the set $\{\delta_{s_j}\circ h_j(T)\}_{j=1}^\infty$ lies in the compact subset
                   $\pi^{-1}\bar B(x_0, C)$.    It follows that the family   $\{\delta_{s_j}\circ h_j\}_{j=1}^\infty$  of
                    $K$-quasiconformal maps  is precompact.
       Define $f_j: \hat N\ra \hat N$ by $f_j=\delta_{s_j}\circ h_j$.
        By passing to  a  subsequence, we may assume $f_j$ converges uniformly to a $K$-quasiconformal map
        $f: \hat N\ra \hat N$.
      We shall show  that for any $g\in G$,  $fgf^{-1}$ is  conformal with respect to the left invariant Carnot-Caratheodory metric determined by $\mu(0)$.

     Let  $g\in G$ and denote $\tilde g=fgf^{-1}$, $\tilde g_j=f_jgf_j^{-1}$.   Let  $\mu_j=(f_j^{-1})^*\mu$.
                    Since $\mu$ is $G$-invariant,      $\tilde g_j$ is conformal with respect to $\mu_j$:
                       $$\tilde g_j^*\mu_j=(f_j^{-1})^*g^*f_j^* (f_j^{-1})^*\mu=(f_j^{-1})^*g^*\mu=(f_j^{-1})^*\mu=\mu_j.$$
                         Note  $\mu_j=(f_j^{-1})^*\mu=(\delta_{s_j}^{-1})^* (h_j^{-1})^*\mu=(\delta_{s_j}^{-1})^*\mu$.
                          So for $q\in N$,
                          $$\mu_j(q)=(\delta_{s_j}^{-1})^*\mu (q)=(D\delta_{s_j}^{-1}(q)|_{V_1})^T[\mu(\delta_{s_j}^{-1}(q))]
                          =\mu(\delta_{s_j}^{-1}(q))$$
            since  $D\delta_t(q)|_{V_1}$ is an Euclidean dilation.

            Let $F\subset \hat N$ be  a compact subset such that  $\infty,
            \tilde g^{-1}(\infty)
            \notin F$.  There is a compact  subset $F_0$  of $N$ such that $F\subset F_0$,
            $ \tilde{g}_j(F)\subset F_0$ for all sufficiently large $j$.
              Since $\mu$ is approximately continuous at $0$  and $s_j\ra \infty$, the equality $\mu_j(q)=\mu(\delta^{-1}_{s_j}(q))$
               implies that    for any $\epsilon>0$    there are subsets $A_j\subset F_0$ with $|A_j|\ra 0$ as $j\ra \infty$
                  and $d(\mu_j(x), \mu(0))\le \epsilon$ for $x\in F_0\backslash A_j$.

                   The maps $\tilde g^{-1}_j$ and $\tilde g^{-1}$ form a compact family of $K$-quasiconformal maps. By Lemma
                    \ref{holder},  we have $|\tilde g_j^{-1}(A_j)|\ra 0$ as $j\ra \infty$.
                            Set $B_j=A_j\cup \tilde g_j^{-1}(A_j)$.
                      Now we have $|B_j|\ra 0$ as $j\ra \infty$ and
                       $d(\mu_j(x), \mu(0))\le \epsilon$ and $d(\mu_j(\tilde g_j(x)), \mu(0))\le \epsilon$ for
                        $x\in F\backslash B_j$.
     Since  $\tilde g_j$ is $\mu_j$-conformal, we have
      $\mu_j(x)=(D\tilde g_j(x)|_{V_1})^T [\mu_j(\tilde g_j(x))]$ for a.e. $x$.
      Now
   $$    d(\mu_j(x), (D\tilde g_j(x)|_{V_1})^T[\mu(0)])
   =    d(\mu_j(\tilde g_j(x)), \mu(0))\le \epsilon    $$
    for a.e. $x\in F\backslash B_j$.         Combining this with  $d(\mu_j(x), \mu(0))\le \epsilon$  we   get
       $d(\mu(0),  (D\tilde g_j(x)|_{V_1})^T[\mu(0)])\le 2\epsilon$
     for        a.e.  $x\in F\backslash B_j$.
       By
            Lemma 3.3 in \cite{CC06}  and
       (\ref{K(A)}) we have $K(\tilde g_j, x)= K(D\tilde g_j(x)|_{V_1})\le 1+ \phi(2\epsilon)$  for
         a.e.  $x\in F\backslash B_j$.
    This implies the assumption of   Lemma \ref{inmeasure}
 is satisfied and so $\tilde g$ is conformal.

     \end{proof}

\noindent
{\bf{Proof of Theorem \ref{tukia}.}}
  Let $G_0$ be a countable dense subgroup of $G$.  
   By  Proposition \ref{existence},   there exists a $G_0$-invariant measurable conformal structure
   $\mu$ on $\hat N$.     Since the    action   of $G_0$ on $T(\partial S)$ is   co-compact,
        every point in $\partial S$ is a radial limit point.
    By    Theorem 2.9.13 in \cite{F69},    $\mu$ is  approximately continuous at  a.e. point in $\hat N$.   By
     Theorem \ref{radial},       there exists a quasiconformal map
     $f$ of $\hat N$ such that    $fG_0f^{-1}$     is a conformal group  of $\hat N$ with respect to   some  left invariant Carnot-Caratheodory metric    $d_{CC}$  on $N$.
Since  $fG_0f^{-1}$    is dense in  $fGf^{-1}$    and the limits   of conformal maps are  conformal,  we  conclude that $fGf^{-1}$  is also a conformal group   with respect to 
 $d_{CC}$.

     For any  left invariant Carnot-Caratheodory metric $d$   on $N$, let $\text{Conf}(\hat N, d)$ be the group of conformal maps of $\hat N$ with respect to $d$.
     We next show that there is a fixed  left invariant Carnot-Caratheodory metric $d_0$   on $N$  such that
   for any   left invariant Carnot-Caratheodory metric      $d$ on $N$,  the group $\text{Conf}(\hat N, d)$
     can be  conjugated into $\text{Conf}(\hat N, d_0)$ by a graded automorphism of $N$.
      For this we use   the result of Cowling-Ottazzi  \cite{CO15}   on conformal maps of Carnot groups.  By Theorem 4.1 of   \cite{CO15} there are two cases
  depending on whether $N$ is the Iwasawa $N$ group of a real  rank-one simple Lie group.

    Let $d$ be a  left invariant Carnot-Caratheodory metric  on $N$.
    First assume $N$  is the Iwasawa $N$ group of a real  rank-one simple Lie group
    $I$. In this case, Theorem 4.1 of   \cite{CO15}   states  that  the action of each $g\in \text{Conf}(\hat N, d)$
     on $\hat N$ agrees with the action of some $\phi(g)\in I$. This $\phi(g)$ is clearly unique and the map $\text{Conf}(\hat N, d)\ra I, g\mapsto \phi(g)$ defines an injective homomorphism.
       It is known that in this case there is a  left invariant Carnot-Caratheodory metric $d_0$   on $N=\partial S\backslash \{\infty\}$  such that $I=\text{Conf}(\hat N, d_0)$.
        Hence the statement holds in this case.

     Next we assume $N$  is   not the Iwasawa $N$ group of a real rank-one simple Lie group. In this case, Theorem 4.1 of   \cite{CO15}   states that for each $g\in \text{Conf}(\hat N, d)$, we have $g(N)=N$ and $g|_N: (N, d)\ra (N, d)$   is a similarity.  In other words, we have  $\text{Conf}(\hat N, d)=\text{Sim}(N,d)$.  Now we finish the proof by applying Lemma \ref{maximal}.

\qed

\noindent
{\bf{Proof of Corollary \ref{quasiaction}}}.
 Let $N$ be a Carnot group and $S=N\rtimes \mathbb R$ be the associated
    solvable Lie group.
   Let $G$ be a  
    group that quasi-acts  co-boundedly on  $S$.
This induces a uniform quasiconformal action of $G$ on $\partial S=\hat N$ such that the induced action on the space of distinct triples is co-compact. By Theorem
 \ref{tukia}   there is  a  fixed (independent of $G$)    left invariant Carnot-Caratheodory metric  $d_0$ on $N$ such that     $G$
   is quasiconformally conjugate to  a subgroup of   $\text{Conf}(\hat N, d_0)$.
    It  now suffices to show that there is a left invariant Riemannian metric  $g_0$ on $S$
     such that   every map in   $\text{Conf}(\hat N, d_0)$   is the boundary map of some isometry of $(S, g_0)$.     Again we will use  Theorem 4.1 of   \cite{CO15}.

    First assume $N$  is the Iwasawa $N$ group of a real  rank-one simple Lie group
    $I$. In this case,   as observed in the proof of Theorem \ref{tukia}, $\text{Conf}(\hat N, d_0)$ injects into $I$,
     which is the isometry group of a left invariant
         Riemannian metric   $g_0$ on $S$.
      Next we assume $N$  is   not the Iwasawa $N$ group of a real  rank-one simple Lie group. In this case,  each $g\in \text{Conf}(\hat N, d_0)$ has the form
        $g|_N=L_{n}\circ \delta_{t_g}\circ A_g$  for some $n\in N$, $t_g>0$, where  $L_n$ is left translation by $n\in N$, $\delta_t$ ($t>0$) is  a Carnot dilation, and
       $A_g:(N,  d_{0})\ra (N, d_{0})$ is an  isometry and also  a graded automorphism of $N$.  Let $\mathfrak n=V_1\oplus \cdots\oplus V_k$ be the Carnot grading of
        $\mathfrak n$.  Then the map $\text{Conf}(\hat N, d_0)\ra GL(V_j), g\mapsto A_g|_{V_j}$ is a group homomorphism whose image has  compact closure.  It follows that there is some    $\text{Conf}(\hat N, d_0)$-invariant    inner product $<,>_j$ on $V_j$.
         Now we  equip $\mathfrak s=\mathfrak n \rtimes \mathbb R$ with the inner product  $<,>$ satisfying: (1) $<,>$ agrees with $<,>_j$ on $V_j$; (2) the subspaces $V_j$, $1\le j\le k$  and $\{0\}\times \mathbb R\subset  \mathfrak n \rtimes \mathbb R$ are all perpendicular to each other with respect to $<,>$.
 For each $g\in \text{Conf}(\hat N, d_0)$, define a map $\phi(g): S\ra S$ by $\phi(g)(x,t)= L_{(n,t_g)}(A_g x, t)$, where $ L_{(n,t_g)}$ is the left translation of $S$ by $(n,t_g)\in S$.   Then $\phi(g)$ is an isometry   of  $(S, g_0)$   with its induced boundary map equal to $g|_N=L_{n}\circ \delta_{t_g}\circ A_g$
  and $\phi: G\ra \text{Isom}(S, g_0)$ is a group homomorphism, where $g_0$ is the   left invariant Riemannian metric on $S$ determined by $<,>$.

Finally we recall that a self quasiconformal map of $\partial S=\hat N$ extends to a quasi-isometry of $S$. Combining this  with the preceding two paragraphs we conclude that the original quasi-action of $G$ on $S$ is quasi-conjugate to an isometric action of $G$  on $(S, g_0)$.

\qed

\section{A fibered  Tukia theorem for diagonal Heintze pairs}

In this section we prove a  fiber bundle  version of Tukia's Theorem (Theorem \ref{foliatedtheorem})
   in the spirit of \cite{D10} for  diagonal  Heintze pairs. 
   We first explain that such a group $N$ admits an iterated fibration  structure  with Carnot group fibers and that each self biLipschitz map of $N$ induces
     bundle
   maps.
   The  fiber Tukia theorem  states  that after a  biLipschitz  conjugation  the induced maps between the fibers are similarities.

\subsection{BiLipschitz maps of  a diagonal Heintze pair
}\label{bilip-diagonal}

Before we prove the fiber  Tukia theorem, we first look at  individual biLipschitz maps of   a diagonal Heintze pair $(N, D)$.

 Let $(N, D)$ be a diagonal Heintze pair  and $0<\lambda_1<\cdots <\lambda_r$ the distinct eigenvalues of $D$.
  Let $\mathfrak n=\oplus_j V_{\lambda_j}$ be the decomposition of the Lie algebra $\mathfrak{n}$ of $N$ into eigenspaces.
 If  $V\subset \mathfrak n$ is a   linear subspace such that $D(V)\subset V$ then $V$ is graded, that is,
  $V=\oplus_j (V\cap V_{\lambda_j})$.  It follows that if $V\subset V'$ are two linear subspaces of $\mathfrak n$ such that $D(V)\subset V$,
  $D(V')\subset V'$,   then $V'/V=\oplus_j (V'\cap V_{\lambda_j})/(V\cap V_{\lambda_j})$  and $D$ induces a linear map on $V'/V$ and acts on
    $ (V'\cap V_{\lambda_j})/(V\cap V_{\lambda_j})$   by multiplication by $\lambda_j$.

 Now let  $\mathfrak h_1$ be the Lie sub-algebra of $\mathfrak n$  generated by $V_{\lambda_1}$.
   We say $(N, D)$  is of Carnot type if $\mathfrak h_1=\mathfrak n$.
        In general, inductively define  $\mathfrak h_i=N(\mathfrak h_{i-1})$ for $i\ge 2$,
   where for a  Lie sub-algebra $\mathfrak h\subset \mathfrak n$,  $N(\mathfrak h)=\{X\in \mathfrak n| [X, Y]\in \mathfrak h, \forall Y\in \mathfrak h\}$  denotes
      the normalizer of $\mathfrak h$ in $\mathfrak n$.
   Then there is some integer 
    $m\ge 1$ such that   $\mathfrak h_m=\mathfrak n$.   We next explain how to refine the sequence $0< \mathfrak h_1 < \cdots <  \mathfrak h_m$. 
     It is not hard to see that $D(\mathfrak h_1)\subset \mathfrak h_1$, and by using the definition of  derivation we obtain
  $D(\mathfrak h_i)\subset \mathfrak h_i$ for all $i$.   Hence   $D$ induces a  diagonal derivation  $\bar D$ of  
   $\mathfrak h_i/{\mathfrak h_{i-1}}$   with positive eigenvalues.
 Then for every $i$, if   $(\mathfrak h_i/{\mathfrak h_{i-1}}, \overline D)$ is of
        non-Carnot type,  we can repeat the above process: first take the Lie sub-algebra generated  by the eigenspace of  the smallest eigenvalue of $\overline D$, then take the normalizers.  In this way we obtain  a sequence of Lie sub-algebras of 
        ${\mathfrak h_i}/{\mathfrak h_{i-1}}$:    $0=\bar{\mathfrak h}_{i,0}< \bar{\mathfrak h}_{i,1}< \cdots <  \bar{\mathfrak h}_{i,m_i}={\mathfrak h_i}/{\mathfrak h_{i-1}}$.   Let $q_i: \mathfrak h_i\ra {\mathfrak h_i}/{\mathfrak h_{i-1}}$ be the quotient map and set 
         $\mathfrak h_{i,j}=q_i^{-1}(\bar{\mathfrak h}_{i,j})$.    The refinement of the sequence    $0< \mathfrak h_1 < \cdots \mathfrak <  \mathfrak h_m$ is obtained   as follows:  for those  $i$ such that $(\mathfrak h_i/{\mathfrak h_{i-1}}, \overline D)$ is of
        non-Carnot type, 
         we  insert    between  $\mathfrak h_{i-1}$ and $\mathfrak h_i$ the sequence 
          $\mathfrak h_{i,1}< \cdots < \mathfrak h_{i, m_i-1}$.   
        This refinement process can be further repeated and eventually we must stop since $\mathfrak n$ is finite dimensional.
             At the end, we obtain a sequence of    Lie sub-algebras
          $0=\mathfrak n_0<\mathfrak n_1<\cdots <\mathfrak n_s=\mathfrak n$    such  that $\mathfrak n_{i-1}$ is an ideal of $\mathfrak n_{i}$   for every $i$, $D(\mathfrak n_i)\subset \mathfrak n_i$,     and  each
              $(\mathfrak n_{i}/{\mathfrak n_{i-1}},  \bar D)$ is of Carnot type (that is,
               $\mathfrak n_{i}/{\mathfrak n_{i-1}}$
             is a  Carnot  algebra and  $D$ induces a derivation $\bar D$   of   $\mathfrak n_{i}/{\mathfrak n_{i-1}}$   that is a multiple of a Carnot derivation).   We remark that when we talk about Carnot algebra here we include the case of abelian Lie algebra.  In
              other words,
               $(\mathfrak n_i/{\mathfrak n_{i-1}}, \overline D)$ is of
       Carnot type
                when
               $ \mathfrak n_{i}/{\mathfrak n_{i-1}}$ is  some $\mathbb R^n$ and $\bar D$ is  a  standard Euclidean dilation.
 Let $N_i$ be the connected Lie subgroup of $N$ with Lie algebra $\mathfrak n_i$. Then
   $N_{i-1}$ is normal in $N_i$ and  $N_{i}/{N_{i-1}}$ is a Carnot group.

 \begin{definition}\label{defn:preserved}Given     a
  diagonal Heintze pair
  $(N,D)$, we call
$1=N_0<N_1<\cdots <N_s=N$  the sequence of  subgroups defined by the process above the \emph{preserved subgroups  sequence}.
\end{definition}

 Let $d$ be a $D$-homogeneous distance on $N$.  The restriction of $d$ on $N_i$ is  a $D|_{\mathfrak n_i}$-homogeneous distance on $N_i$. As $N_{i-1}\triangleleft N_i$,
  $d$ induces a distance  $\bar d$   on $N_i/{N_{i-1}}$, see end of Section \ref{homo distance}.      By the preceding paragraph, this distance $\bar d$ is a $\bar D$-homogeneous distance    and hence is biLipschitz with
   $\bar d_{CC}^{\frac{1}{\lambda}}$, where $\bar d_{CC}$ is a Carnot-Caratheodory metric on $N_i/{N_{i-1}}$ and $\lambda>0$ is the smallest eigenvalue of $\bar D$.
   For each $i$ we have a fibration $\pi_i: N/{N_{i-1}}\to N/{N_i}$ with fiber $N_i/{N_{i-1}}$.     The distance $d$ does not induce  any distance on $N/{N_i}$ when $N_i$ is not normal in $N$.
      However, as indicated above, $d$ still induces a distance $\bar d$ on the fibers $N_i/{N_{i-1}}$.

      The following result in particular applies to biLipschitz maps of $N$ (when $\mathfrak h_1\not=\mathfrak n$).

      \begin{theorem}(\cite{CP17}) \label{permutes}
 Suppose $(N, D)$ is not of Carnot type. Then every  quasisymmetric map
 $F: N\rightarrow N$  permutes the left cosets of $N_1$  and is biLipschitz.
 \end{theorem}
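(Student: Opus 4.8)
Fix a $D$-homogeneous distance $d$ on $N$, and rescale $D$ so that the smallest eigenvalue is $\lambda_1=1$; then $(N,d)$ is Ahlfors $Q$-regular with $Q=\mathrm{tr}\,D$, and it is homogeneous under left translations and under the one-parameter group $\{e^{tD}\}$. Note that $N_1$ is the subgroup generated by $V_{\lambda_1}$ (since $(\mathfrak h_1,D)$ is already of Carnot type, no refinement occurs below $\mathfrak h_1$, so $\mathfrak n_1=\mathfrak h_1$). The plan is in two parts: (i) show that the foliation $\mathcal F$ of $N$ by left cosets of $N_1$ is determined by the metric $d$ alone, so that every quasisymmetric self-map of $N$ permutes its leaves; (ii) promote this to the biLipschitz conclusion.

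For (i), the structural input is that the leaves of $\mathcal F$ carry an intrinsic Carnot--Carath\'eodory geometry, whereas the complementary directions are \emph{snowflaked}: on a leaf $gN_1$ the restriction of $d$ is a $D|_{\mathfrak n_1}$-homogeneous distance, hence biLipschitz to a left-invariant Carnot--Carath\'eodory metric on the Carnot group $N_1$, while moving in a direction spanned by $V_{\lambda_j}$ with $\lambda_j>1$ costs distance $\asymp|t|^{1/\lambda_j}$ (read off from the model function $\rho$). In particular the leaves of $\mathcal F$ are exactly the rectifiable path components of $(N,d)$: a curve of finite $d$-length must be horizontal, i.e. tangent almost everywhere to the left-invariant distribution $\mathcal D$ spanned by $V_{\lambda_1}\subset\mathfrak n_1$, hence stays in a single integral leaf of $\mathfrak n_1$; conversely any two points of a common coset are joined by a horizontal curve by Chow's theorem inside the Carnot group $N_1$. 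Since $(N,d)$ is \emph{not} a length space, Loewner, or PI (precisely because $V_{\lambda_1}$ generates only $N_1$), this rectifiability description is robust, but quasisymmetric maps need not preserve rectifiability, so one cannot conclude directly. Instead I would invoke a Pansu--Rademacher type differentiation theorem: a quasisymmetric $F\colon(N,d)\to(N,d)$ is, by Ahlfors regularity and the absolute-continuity theory, differentiable almost everywhere in the sense adapted to $\{e^{tD}\}$, meaning the rescaled conjugates $e^{tD}\circ L_{F(p)^{-1}}\circ F\circ L_p\circ e^{-tD}$ converge (along $t\to+\infty$, subsequentially) to a quasisymmetric self-map $DF(p)$ of $(N,d)$ fixing the identity and commuting with $\{e^{tD}\}$. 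Such an $e^{tD}$-equivariant limit must preserve $\mathcal D$ and its saturation $\mathfrak n_1$, hence carry the tangent space of $\mathcal F$ at the identity to itself (this is the layer-preservation of Lemma~\ref{bilip auto} at the bottom level). As this holds a.e. and $F$ is absolutely continuous, $F$ maps leaves of $\mathcal F$ into leaves; applying the same to $F^{-1}$ shows $F$ permutes the left cosets of $N_1$.

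For (ii), once $F$ permutes the $N_1$-cosets it induces a (local) self-map of the leaf space and restricts on each leaf to a quasisymmetric self-map of a Carnot group; the issue is a uniform biLipschitz bound. Here I would use the snowflaked transverse factor together with $e^{tD}$-homogeneity: the distortion ratio $d(F(x),F(y))/d(x,y)$ is controlled transversally, and a blow-up/blow-down argument rules out degeneration — if it were unbounded one could rescale by $e^{tD}$ to extract a limiting quasisymmetric map collapsing a set of positive measure, contradicting Ahlfors regularity and the distortion-of-measure estimates in the spirit of Lemma~\ref{holder}. Pinning down the transverse rate forces the rate along the leaves as well, giving the global biLipschitz bound.

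The main obstacle is part (i): making precise that the $N_1$-foliation is a quasisymmetric invariant in a setting that is not a length space and has no Poincar\'e inequality. The tempting statement ``rectifiable path components are preserved'' fails for quasisymmetric maps, so one genuinely needs the almost-everywhere differentiation theory for quasisymmetric maps of $D$-homogeneous nilpotent groups and the identification of their blow-ups with $e^{tD}$-equivariant maps; this is the technical heart, and is exactly what \cite{CP17} provides.
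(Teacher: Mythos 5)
The paper itself does not prove this theorem: it is stated with a citation to \cite{CP17} and no argument is given, so there is no ``paper's own proof'' to compare against. I will therefore assess your proposal on its own merits, in light of what \cite{CP17} actually does.

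There is a genuine gap, and it sits exactly where you flag it but then try to push through. Your step (i) rests on ``a Pansu--Rademacher type differentiation theorem'' for quasisymmetric self-maps of $(N,d)$, asserting that $e^{tD}\circ L_{F(p)^{-1}}\circ F\circ L_p\circ e^{-tD}$ subconverges a.e.\ to an $e^{tD}$-equivariant quasisymmetric map, and also on ``the absolute-continuity theory'' for $F$. Neither of these is available here. Pansu differentiability and absolute continuity of quasiconformal/quasisymmetric maps are established for Carnot groups (and more generally for Ahlfors-regular PI/Loewner spaces), and the proofs rely essentially on the length structure, Sobolev--Poincar\'e theory, and modulus estimates. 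When $(N,D)$ is not of Carnot type, $(N,d)$ is a snowflake in the non-$V_{\lambda_1}$ directions; it is not a length space, has no Loewner structure, supports no Poincar\'e inequality, and quasisymmetric maps of such spaces are a priori neither absolutely continuous nor differentiable in any useful sense. So the lemma you invoke is not a black box you can cite --- establishing it would be at least as hard as the theorem itself, and it is not known in this generality. The subsequent step, that an $e^{tD}$-equivariant blow-up ``must preserve $\mathcal D$ and its saturation'' via Lemma~\ref{bilip auto}, also does not follow: Lemma~\ref{bilip auto} concerns automorphisms, whereas a hypothetical blow-up of a quasisymmetric map need not be a group automorphism.

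The actual proof in \cite{CP17} takes a fundamentally different, global route: it works on the Heintze group $S=N\rtimes_D\R$ and uses $\ell^p$/Orlicz cohomology and critical exponents as quasi-isometry invariants. The non-Carnot hypothesis forces a gap between the exponent attached to $V_{\lambda_1}$ and the rest of the spectrum; this gap is detected cohomologically and pins down the canonical foliation by $N_1$-cosets (and the parabolic point at infinity) as a quasi-isometry invariant of $S$, hence a quasisymmetric invariant of $N$. BiLipschitzness then follows because the snowflaked transverse geometry forbids nontrivial distortion once the foliation and the parabolic point are preserved. In short: the invariant that rigidifies the picture is cohomological and large-scale, not a pointwise derivative, and that is precisely why the argument survives the failure of the Poincar\'e inequality that blocks your differentiation strategy.

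One small remark on your setup: you are right that $\mathfrak n_1=\mathfrak h_1$, since $\mathfrak h_1$ is already of Carnot type and no refinement occurs below it; and your observation that the leaves of the $N_1$-foliation are exactly the rectifiable-path components of $(N,d)$ is correct and is a good way to see \emph{why} the foliation ought to be intrinsic. The problem is only in converting ``ought to be intrinsic'' into a proof without the analytic machinery that the snowflake structure takes away.
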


 By  Lemma 3.2, \cite{CPS17} (see also Lemma 4.4, \cite{LX15} in the Carnot case),   for a   connected  Lie subgroup $H\subset N$, two left cosets $g_1H$, $g_2H$ are  at finite Hausdorff distance from each other  if and only if $g_1H$, $g_2H$    lie in the same
    left coset of $N(H)$, the normalizer of $H$ in $N$.
  It follows that a biLipschitz map $F:  N\ra N$ permutes the cosets of $N_i$ for every $i$.
      As a consequence,  $F$ induces a map
  $F_i:  N/{N_i}\ra  N/{N_i}   $   and a bundle  map  of the fibration
   $\pi_i:    N/{N_{i-1}}\to N/{N_i}$.   In general we cannot talk about the metric property of these maps as $d$ does not induce a metric on
    $N/{N_i}$   when $N_i$ is not normal in $N$.  However,
   the  restriction  of $F$ to cosets of $N_i$   yields a biLipschitz map of $N_i$,   that is, $F_p|_{N_i}=(L_{F(p)^{-1}}\circ F\circ L_p)|_{N_i}: N_i\to N_i$  is biLipschitz for every $p\in N$.    As $F_p|_{N_i}$ also permutes the cosets of $N_{i-1}$, it      induces a
       biLipschitz map $F_{i,p}:   N_i/{N_{i-1}}  \ra  N_i/{N_{i-1}}$.
         In other words,  the restrictions of $F_{i-1}$  to  the fibers  of the fibration
          $\pi_i:   N/{N_{i-1}}\to N/{N_i}$  are biLipschitz maps between the fibers.
   Observe  that as $N_{i-1}\triangleleft N_i$,  we have $F_{i,p}=F_{i,q}$ when $q=pw$ for some $w\in N_{i-1}$.  Because of this,   we may define  $F_{i,h}$ for $h\in N/{N_{i-1}}$ by $F_{i,h}=F_{i,p}$ for any $p\in N$  satisfying $h=pN_{i-1}$.

\begin{remark}\label{samePder} Note that for  $q=pw$ with $w\in N_i-N_{i-1}$,
  we have
 $F_{i,q}(N_{i-1}) \neq F_{i, p}(wN_{i-1})$ however  the Pansu differentials satisfy
$DF_{i,p}(wN_{i-1})=DF_{i, q}(0)$ by the definition of Pansu   differential.
\end{remark}

For convenience, we introduce the following terminology.

 \begin{definition}\label{def of post foliated}
  Let $(N, D) $ be a  diagonal Heintze pair   with
  preserved subgroups  sequence
$1=N_0<N_1<\cdots <N_s=N$.
   Let $F: N\ra N$ be a
   biLipschitz map.  
    We say  $F$
   is   an    $i$-similarity  for  some $1\le i\le s$  if
 there exists  a Carnot-Caratheodory metric $\bar d_i$ on $N_i/{N_{i-1}}$   such that
 $F_{i,p}:   (N_i/{N_{i-1}}, \bar d_i)   \ra  (N_i/{N_{i-1}}, \bar d_i)$
     is a similarity  for any $p\in N$.
   The map  $F$ is  a  fiber  similarity  if it is  an  $i$-similarity for each $i$.
 A uniform quasisimilarity group   $\Gamma$ of  $N$ is a fiber  similarity group
    if
                          there exist  Carnot-Caratheodory metrics $\bar d_i$ on $N_i/{N_{i-1}}$   such that
                        each $\gamma\in \Gamma$    is  a fiber  similarity
                          with respect to these metrics.

\end{definition}

\subsection{Invariant measures on homogeneous spaces}

Let $N$ be  a simply connected nilpotent Lie group and $H$ a closed connected Lie subgroup.   Then the homogeneous space $N/H$ admits a $N$-invariant measure.
 By  Theorem 1.2.12  and its proof  in \cite{CG90},  the   product of a Haar  measure $m_H$ on $H$ and an invariant measure $m_{N/H}$ on $N/H$
   is a  Haar measure on $N$,
 in the following sense.  There is a smooth submanifold $K\subset N$ and a measure $m'$ on $K$  with the following properties:
   (1) the map $P: K\times H\ra N$, $(k,h)\mapsto kh$, is a diffeomorphism and $P_*(m'\times m_H)$ is a  Haar measure on $N$; (2) $(\pi\circ \iota)_*(m')=m_{N/H}$, where $\iota: K\subset N$ is the inclusion and $\pi: N\ra N/H$ is the  quotient map.

   \begin{lemma}\label{invariantm}
       Let $(N,D)$ be a diagonal  Heintze pair, and $H$ a  connected Lie subgroup of $N$.
          Let $F: N\ra N$ be a biLipschitz map  that permutes the cosets of $H$.   Let $\bar F: N/H\ra N/H$ be the induced map of the homogeneous space $N/H$.   Then  there is a constant $C$ depending only on the biLipschitz constant of $F$ such that
           for any  measurable subset $A\subset N/H$ the inequality
              $\frac{1}{C} m_{N/H}(A)\le m_{N/H}(\bar F(A))\le C m_{N/H}(A)$ holds, where
               $m_{N/H}$ is an invariant measure on  $N/H$.

   \end{lemma}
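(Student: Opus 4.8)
The plan is to combine the product–measure description $N\cong K\times H$ recalled just above with the elementary fact that an $L$-biLipschitz self-map of $(N,d)$ (for $d$ any $D$-homogeneous distance) distorts Haar measure by a factor at most $L^Q$, where $Q$ is the homogeneous dimension. The latter is standard: $(N,d)$ is Ahlfors $Q$-regular — one has $m_N(B(e,r))=cr^Q$ for all $r>0$ by the $e^{tD}$-scaling together with left invariance — so the Hausdorff $Q$-measure is a nonzero locally finite left invariant Borel measure, hence a constant multiple of $m_N$ by uniqueness of Haar measure, and Hausdorff $Q$-measure is distorted by an $L$-biLipschitz map by at most $L^Q$. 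I will use only the resulting upper bound $m_N(F(E))\le L^Q m_N(E)$, applying it also to $F^{-1}$, and I take $m_N$ to be the Haar measure $P_*(m'\times m_H)$.

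To set up, recall that since $N$ is simply connected nilpotent the connected subgroup $H$ is itself closed and simply connected; hence $d|_H$ induces the manifold topology on $H$ and $(H,d|_H)$ is proper, so for the closed balls $B^H_r:=\{h\in H: d(e,h)\le r\}$ one has $0<m_H(B^H_r)<\infty$ for every $r>0$. Put $\mu_0:=m_H(B^H_1)$ and $\mu_1:=m_H(B^H_{1/L})$, and note that $\pi\circ\iota\colon K\to N/H$ is a bijection (since $P$ is a diffeomorphism) pushing $m'$ to $m_{N/H}$. Given a Borel set $A\subset N/H$, set $A':=(\pi\circ\iota)^{-1}(A)\subset K$ and form the slab $\Omega:=P(A'\times B^H_1)=\bigcup_{k\in A'}kB^H_1$. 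Then $m_N(\Omega)=(m'\times m_H)(A'\times B^H_1)=\mu_0\,m_{N/H}(A)$, so $m_N(F(\Omega))\le L^Q\mu_0\,m_{N/H}(A)$.

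The next step is to bound $m_N(F(\Omega))$ from below by $\mu_1\,m_{N/H}(\bar F(A))$. Since $F$ permutes the $H$-cosets, for each $k\in A'$ it carries $kH$ onto $F(k)H$; identifying each of these cosets isometrically with $(H,d|_H)$ by left translation turns $F|_{kH}$ into an $L$-biLipschitz bijection $\phi_k$ of $(H,d|_H)$, and under this identification $F(kB^H_1)$ corresponds to $\phi_k(B^H_1)$, which contains the closed ball of radius $1/L$ about $\phi_k(e)$ and hence has $m_H$-measure at least $\mu_1$. Writing $m_N(E)=\int_K m_H(\{h\in H:kh\in E\})\,dm'(k)$ (Tonelli together with the product structure) for $E=F(\Omega)=\bigcup_{k\in A'}F(kB^H_1)$, the inner set is empty unless $k$ is the $K$-representative of the coset $\bar F(\pi(k'))$ for some $k'\in A'$ — there is at most one such $k'$ since $\bar F$ is a bijection — and for such $k$ it is the copy of $\phi_{k'}(B^H_1)$, of measure $\ge\mu_1$; as $k$ ranges over $(\pi\circ\iota)^{-1}(\bar F(A))$ this gives $m_N(F(\Omega))\ge\mu_1\,m_{N/H}(\bar F(A))$. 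Combining the two estimates, $m_{N/H}(\bar F(A))\le(L^Q\mu_0/\mu_1)\,m_{N/H}(A)$; applying this to $F^{-1}$ (which is $L$-biLipschitz, permutes the $H$-cosets and induces $(\bar F)^{-1}$) and to the set $\bar F(A)$ yields the reverse inequality, so the lemma holds with $C:=L^Q\mu_0/\mu_1$, which depends only on the biLipschitz constant of $F$. Passing from Borel to arbitrary measurable $A$ is routine, since $\bar F$ is a homeomorphism of $N/H$ and the Borel case already shows $\bar F^{\pm1}$ sends $m_{N/H}$-null sets to null sets.

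The one genuinely delicate point — the main obstacle — is that the section $K$ picks one representative per $H$-coset whereas $F$ need not respect that choice, so $F(\Omega)$ can spread arbitrarily far along the fibers and cannot be contained in a slab $P(A''\times B')$ with $B'$ bounded. The remedy is to measure $F(\Omega)$ fiberwise via the isometric identification of each coset with $(H,d|_H)$: left invariance of $d$ is precisely what makes that identification an isometry and keeps the fiber balls of a uniform size, and everything else is bookkeeping with Tonelli's theorem and the cited product-measure decomposition.
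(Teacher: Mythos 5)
Your proof is correct and follows essentially the same approach as the paper's: form the slab $P(A'\times B)$ over a fundamental domain $K$ for $N/H$, use Fubini/Tonelli over the $K\times H$ product measure from \cite{CG90}, and compare total measures via the biLipschitz distortion of Haar measure on $N$ and on the $H$-fibers. The only cosmetic difference is that the paper derives both inequalities directly from a two-sided fiber measure distortion bound, while you obtain only the lower bound on the fiber slice via ball containment and then recover the reverse inequality by applying the result to $F^{-1}$.
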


   \begin{proof}
   Since the Hausdorff measure on $N$ with respect to a $D$-homogeneous distance  is a Haar measure and $F$ is biLipschitz,  there is a constant depending only on the biLipschitz constant of $F$ such that
   $\frac{1}{C} m_N(U)\le m_N(F(U))\le C m_N(U)$  for any measurable subset $U\subset N$.   Similarly, since the restriction of $F$ to cosets of $H$  is also biLipschitz, we have
    $\frac{1}{C} m_H(B)\le m_H(F(B))\le C m_H(B)$  for measurable subsets  $B$  of  cosets of $H$.    Now let $A\subset N/H$ be a measurable subset.   Let $A'=\pi^{-1}(A)\cap K$, where $K$ is   as above.  Let $B$ be a  bounded open subset of $H$.
    Set $U=P(A'\times B)\subset N$.    Then    $m_N(U)=m_H(B) m_{N/H}(A)$.
    By the above we have
      \begin{equation}\label{F(U)}
      \frac{1}{C} m_N(U)\le m_N(F(U))\le C m_N(U).
      \end{equation}
        On the other hand,
      \begin{align*}
      m_N(F(U))&=\int_N 1_{F(U)}d m_N=\int_K(\int_{kH} 1_{F(U)} dm_H)dm'(k)\\
      & \le
       \int_{\pi^{-1}(\bar F(A))\cap K} C m_H(B) dm'(k)=C m_H(B) m_{N/H}(\bar F(A)).
      \end{align*}
      Similarly we obtain $m_N(F(U))\ge \frac{1}{C}m_H(B) m_{N/H}(\bar F(A))$.
    Combining  the above inequalities we get
     $\frac{1}{C^2} m_{N/H}(A)\le m_{N/H}(\bar F(A))\le C^2  m_{N/H}(A)$.

   \end{proof}

   \subsection{Conjugating into  a fiber similarity group.}\label{sec:conjtosim}

Let $1\le i\le s$ be such that  $\dim(N_i/{N_{i-1}})\ge 2$.
Recall  that the quotient   $N_i/{N_{i-1}}$  is a Carnot group.  Let $H_i$ be    the first layer of $\mathfrak n_i/{\mathfrak n_{i-1}} $   and $m_i=\dim(H_i)$.     Notice that  $m_i\ge 2$.

\begin{definition}
A  measurable  fiber conformal structure on the fibration  $\pi_i: N/{N_{i-1}}\ra N/{N_i}$
 is an essentially bounded  measurable map
                    $$\mu: N/{N_{i-1}} \to  X:=SL(m_i)/SO(m_i).$$
\end{definition}
   So a  measurable  fiber conformal structure on the fibration  $\pi_i: N/{N_{i-1}}\ra N/{N_i}$
     can be thought of as  a measurable assignment of inner products (up to scalar multiple)   to horizontal subspaces of the  tangent spaces of the fibers (of the fibration).
For a review of  $SL(n)/SO(n)$ and its Riemannian distance $d_X$ see  Section \ref{conformalstructure}.

For $F:N \to N$ a biLipschitz map we define the pull-back
   $$F^*\mu (pN_{i-1})=(DF_{i, p}(0)\vert_{H_i})^T[\mu(F_{i-1}(pN_{i-1}))], \;\;\;\text{for a.e.}\; pN_{i-1}\in N/{N_{i-1}}.$$
    Notice that the pull-back is well-defined as $F_{i, p}$ depends only on the coset $pN_{i-1}$.

\begin{definition}
We say that $F$ is \emph{conformal with respect to $\mu$} if $F^*\mu (pN_{i-1}) = \mu(pN_{i-1})$ for a.e. $ pN_{i-1}\in N/{N_{i-1}}$.
\end{definition}

     \begin{Prop} \label{existence2}
      Let $\G$ be a countable   uniform quasisimilarity  group
     of $(N,d)$.
     Let $1\le i\le s$ be such that  $\dim(N_i/{N_{i-1}})\ge 2$.
      Then there is a     measurable  fiber  conformal structure $\mu$
        on the fibration  $\pi_i: N/{N_{i-1}}\ra N/{N_i}$
       such that every $\g \in \G$ is conformal with respect to $\mu$.

     \end{Prop}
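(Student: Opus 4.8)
The plan is to imitate the proof of Proposition~\ref{existence} in the fibered setting, replacing Pansu differentials of quasiconformal maps of $\hat N$ with the restricted Pansu differentials $DF_{i,p}(0)|_{H_i}$ of elements of $\G$ along the fibers of $\pi_i$. First I would record the measurability infrastructure: for each $\g\in\G$, by Theorem~\ref{permutes} (applied iteratively, or trivially when $s=1$) $\g$ is biLipschitz and permutes cosets of each $N_j$, so $\g_{i,p}$ is a biLipschitz self-map of the Carnot group $N_i/N_{i-1}$, hence Pansu differentiable a.e.\ with graded differential; and by Remark~\ref{samePder} the quantity $D\g_{i,p}(0)|_{H_i}$ depends measurably on the coset $pN_{i-1}\in N/N_{i-1}$. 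Since $\G$ is countable, I can fix a full-measure $\G$-invariant subset $U\subset N/N_{i-1}$ on which all these differentials exist and are graded automorphisms; here "full measure" and "$\G$-invariant" are with respect to an $N$-invariant measure $m_{N/N_{i-1}}$ on $N/N_{i-1}$, whose $\G$-quasi-invariance is exactly the content of Lemma~\ref{invariantm} (so null sets are preserved).

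Next I would set up the barycenter argument. Fix a constant fiber conformal structure $\mu_0\colon N/N_{i-1}\to X=SL(m_i)/SO(m_i)$ coming from a choice of inner product on $H_i$. For $h\in U$ put $M_h=\{\g^*\mu_0(h):\g\in\G\}\subset X$. Because $\G$ is a uniform quasisimilarity group, the restricted differentials $D\g_{i,p}(0)|_{H_i}$ have uniformly bounded dilatation (the scalar part is killed by passing to $X=SL(m_i)/SO(m_i)$), so $M_h$ is a bounded subset of the nonpositively curved symmetric space $X$; let $\mu(h)=P(M_h)$ be its circumcenter (p.~179 of \cite{BH}). The cocycle identity $(\g\circ\eta)^*\mu_0=\eta^*(\g^*\mu_0)$ — which holds here because $(\g\eta)_{i,p}=\g_{i,\eta(p)}\circ\eta_{i,p}$ on the relevant cosets and Pansu differentials chain-rule — gives $\eta^*M_{\eta_{i-1}(h)}=M_h$, hence $\mu$ is $\G$-invariant: $\eta^*\mu=\mu$. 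Measurability of $\mu$ follows as in Proposition~\ref{existence}: enumerate $\G=\{\g_0,\g_1,\dots\}$, take circumcenters $\mu_j$ of the finite sets $M(h,j)=\{\g_\ell^*\mu_0(h):\ell\le j\}$, note each $\mu_j$ is measurable and $\mu_j\to\mu$ pointwise (continuity of the circumcenter in the Hausdorff metric, p.~334 of \cite{T86}). Essential boundedness of $\mu$ is clear since the $M_h$ lie in a fixed ball of $X$ determined by the uniform quasisimilarity constant.

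The main obstacle I anticipate is bookkeeping rather than conceptual: one must be careful that the pull-back $F^*\mu$ defined via $DF_{i,p}(0)|_{H_i}$ is well-defined on cosets (it is, by the remark preceding the proposition, since $F_{i,p}$ depends only on $pN_{i-1}$), that the chain rule for Pansu differentials composes correctly with the coset-relabeling $h\mapsto \eta_{i-1}(h)$ on the base of $\pi_i$, and that all the a.e.\ statements are consistent because $\G$-translates of null sets in $N/N_{i-1}$ are null (Lemma~\ref{invariantm}). I would also note that unlike Proposition~\ref{existence} there is no issue with a point $\infty$: the map $\mu$ is genuinely defined on all of $N/N_{i-1}$ minus a null set. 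One small extra point: since the action of $SL(m_i)$ on $X$ is by $M[S]=(\det M)^{-2/m_i}MSM^T$, the transpose appears in the pull-back formula exactly as in Section~\ref{conformalstructure}, so the verification that $\g$ is conformal with respect to $\mu$ reads, for a.e.\ $h$, $\mu(h)=(D\g_{i,p}(0)|_{H_i})^T[\mu(\g_{i-1}(h))]$, which is precisely $\g^*\mu=\mu$. This completes the construction; the proof is essentially a fibered transcription of the $\hat N$ case with Lemma~\ref{invariantm} supplying the needed measure-class invariance on the base.
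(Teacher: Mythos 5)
Your proposal is correct and follows essentially the same route as the paper's proof: fix a full-measure $\G$-invariant set where the fiberwise Pansu differentials exist and are graded, pull back a constant fiber conformal structure $\mu_0$, use uniform quasisimilarity to get bounded orbits in $X=SL(m_i)/SO(m_i)$, take circumcenters, and verify measurability and $\G$-invariance exactly as in Proposition~\ref{existence}. The only difference is that you spell out some bookkeeping (the chain rule for $\gamma_{i,p}$, Lemma~\ref{invariantm} for null-set preservation) that the paper leaves implicit by referring back to the Carnot-group case.
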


 \begin{proof}
 There is a   $\Gamma$-invariant
 subset  $U\subset N/{N_{i-1}}$ of full measure  such that  for all $\g \in \G$ and  all
  $pN_{i-1} \in U$ the foliated Pansu derivative $D\gamma_{i, p}(0)$ exists and is a graded automorphism of $N_i/{N_{i-1}}$.
     Let $\mu_0$ be the    $N$-invariant   fiber  conformal   structure
       on the fibration  $\pi_i: N/{N_{i-1}}\ra N/{N_i}$
     that is  associated with an inner product  on $H_i$ (the first layer of $\mathfrak n_i/{\mathfrak n_{i-1}}$).
       For each $pN_{i-1}\in U$, set
        $M_{pN_{i-1}}=\{\g^*\mu_0(pN_{i-1}): \g\in \G$\}.
          Since $\G$ is a uniform quasisimilarity group,  $M_{pN_{i-1}}$ is a bounded subset of $SL(m_i)/SO(m_i)$.
        The assignment $pN_{i-1} \mapsto M_{pN_{i-1}}$ is $\G$-invariant.
          Let   $P(M_{pN_{i-1}})$  be the circumcenter of $M_{pN_{i-1}}$ in  $SL(m_i)/SO(m_i)$  and define $\mu:  U\ra SL(m_i)/SO(m_i)$ by  $\mu(pN_{i-1})=P(M_{pN_{i-1}})$.     Then  $\mu$ is  $\G$-invariant and measurable (see the proof of Proposition \ref{existence}
           for details).
                   \end{proof}

 For the proof of the main result in this section we  need a modified version of Lemma \ref{inmeasure}.

 \begin{lemma}\label{inmeasure2}
   Let $\bar d_{i}$ be a left invariant Carnot-Caratheodory metric on $N_i/{N_{i-1}}$. For a biLipschitz map
   $G: N\ra N$ and any $pN_{i-1}\in N/{N_{i-1}}$,  denote  by $K(G_{i,p}, 0)$ the dilatation of the map
    $G_{i, p}: (N_i/{N_{i-1}}, \bar d_{i})\ra (N_i/{N_{i-1}}, \bar d_{i})$ at $0$.
 Let $\{F^j:  N\ra  N\}$ be a sequence of  $(L,C)$-quasisimilarities
     that  converge uniformly on compact subsets   to  a map $F: N\ra N$.
     Suppose for any compact subset $S \subset N/{N_{i-1}}$ and any $\epsilon>0$ we have
      $$m_{N/{N_{i-1}}}(\{pN_{i-1} : pN_{i-1} \in S,\ K(F^j_{i, p}, 0)\ge 1+\epsilon\})\ra 0$$
      as $j\ra \infty$, where $m_{N/{N_{i-1}}}$ is an invariant measure on the space
        $N/{N_{i-1}}$.       Then  for  every $pN_{i-1} \in N/{N_{i-1}}$,
         $F_{i, p}$ is
         a similarity of $(N_i/{N_{i-1}},   \bar d_{i})$.

 \end{lemma}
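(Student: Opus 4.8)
The plan is to reduce everything to the single Carnot group $M_i:=N_i/{N_{i-1}}$ (with its metric $\bar d_i$), to slice the hypothesis along the fibration $\pi_i: N/{N_{i-1}}\ra N/N_i$, and to apply Lemma \ref{inmeasure} one fiber at a time, then patch the conclusion back together.

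First I would record that $F$ is itself an $(L,C)$-quasisimilarity: letting $j\ra\infty$ in $(C/L)\,d(x,y)\le d(F^j x,F^j y)\le CL\,d(x,y)$ preserves both inequalities. In particular $F$ is biLipschitz, so by Theorem \ref{permutes} and the coset criterion of \cite{CPS17} it permutes the cosets of every $N_\ell$; thus $F_{i,q}$ (and each $F^j_{i,q}$) is a well-defined biLipschitz self-map of $M_i$ fixing $0$. Moreover each $F_{i,q}$ is a quasisimilarity of $(M_i,\bar d_i)$ whose distortion is bounded by a constant and whose scaling factor lies in a fixed compact subinterval of $(0,\infty)$, both depending only on $L,C$ and the smallest eigenvalue of $\bar D$ on $M_i$ (restrict $F$ to a coset of $N_i$, conjugate by the left translations, pass to the quotient, and use that the quotient metric is biLipschitz to a fixed power of $\bar d_i$). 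Finally, for $q=pw$ with $w\in N_i$, Remark \ref{samePder} together with Lemma 3.3 of \cite{CC06} gives $K(F^j_{i,q},0)=K_{\bar d_i}(F^j_{i,p},wN_{i-1})$, the ordinary dilatation of $F^j_{i,p}$ at the point $wN_{i-1}$; so the hypothesis is really a statement about the pointwise dilatations of the maps $F^j_{i,p}$.

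Next I would disintegrate. Using the local product structure of the invariant measure from Lemma \ref{invariantm} (and the underlying Corwin--Greenleaf decomposition), I write $m_{N/{N_{i-1}}}$ as an integral over $N/N_i$ of invariant measures on the fibers $M_i$, measures that are comparable on bounded sets to a Haar measure of $M_i$. Feeding the hypothesis through Fubini, exhausting $M_i$ by compacta, and passing to a subsequence $j_k$ (harmless, since the conclusion concerns only the subsequence-independent limit $F$), I obtain a set of base points $pN_i$ of full measure such that $F^{j_k}_{i,p}\ra F_{i,p}$ uniformly on compacta of $M_i$ and, for every compact $S'\subset M_i$ and every $\epsilon>0$, $m_{M_i}(\{x\in S': K_{\bar d_i}(F^{j_k}_{i,p},x)\ge 1+\epsilon\})\ra 0$. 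For such $p$ the maps $F^{j_k}_{i,p}$, extended to the one-point compactification $M_i\cup\{\infty\}$ by fixing $\infty$, form a compact family of uniformly quasiconformal maps, so Lemma \ref{inmeasure}, applied to the Carnot group $M_i$ with the Carnot--Carath\'eodory metric $\bar d_i$ and a Haar measure (here $\infty=F_{i,p}^{-1}(\infty)$ is the only excluded point and it is not in $M_i$), shows that $F_{i,p}$ is conformal, i.e.\ $1$-quasiconformal with respect to $\bar d_i$. A second use of Fubini upgrades this to: $F_{i,q}$ is $\bar d_i$-conformal for $m_{N/{N_{i-1}}}$-a.e.\ $qN_{i-1}$.

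It then remains to pass from conformal to similarity, and from a.e.\ $q$ to every $q$. Since $F_{i,q}$ is biLipschitz it fixes $\infty$, so by Theorem 4.1 of \cite{CO15} it is a $\bar d_i$-similarity: if $M_i$ is not the Iwasawa group of a real rank-one simple Lie group this is immediate, and otherwise $F_{i,q}$ lies in that simple group and, fixing $\infty$, lies in its point-stabilizer parabolic, which acts on $M_i$ by $\bar d_i$-similarities. Hence $F_{i,q}$ is a $\bar d_i$-similarity for a.e.\ $q$. Finally $q\mapsto F_{i,q}$ is continuous for uniform convergence on compacta of $M_i$ (because $F$ is continuous and $L_q$ varies continuously with $q$), and each $F_{i,q}$ is a $\bar d_i$-quasisimilarity with scaling factor in a fixed compact subinterval of $(0,\infty)$; so given any $q_0$, picking $q_n\ra q_0$ with $F_{i,q_n}$ a $\bar d_i$-similarity of factor $\lambda_n$ and a subsequence with $\lambda_n\ra\lambda_0>0$, the limit $F_{i,q_0}=\lim F_{i,q_n}$ satisfies $\bar d_i(F_{i,q_0}x,F_{i,q_0}y)=\lambda_0\,\bar d_i(x,y)$, hence is a $\bar d_i$-similarity. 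The hard part will be the slicing of the third paragraph: tracking carefully, via Remark \ref{samePder} and the measure disintegration, how the hypothesis ``dilatation at the origin of every fiber map'' encodes fiberwise a.e.\ dilatation control, so that Lemma \ref{inmeasure} can legitimately be applied fiber by fiber and the outcome reassembled into a statement for every $p$.
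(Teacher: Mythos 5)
Your proposal is correct and follows essentially the same route as the paper's proof: disintegrate $m_{N/N_{i-1}}$ via the Corwin--Greenleaf product decomposition, use Fubini and an $L^1$-to-a.e.\ subsequence extraction to show that for almost every fiber the hypotheses of Lemma \ref{inmeasure} are met (translating the hypothesis into a pointwise dilatation bound via $DF_{i,p}(xN_{i-1})=DF_{i,px}(0)$), conclude that $F_{i,p}$ is $\bar d_i$-conformal and hence, by Cowling--Ottazzi, a similarity, and finally upgrade from a.e.\ fiber to every fiber by a limit-of-similarities argument. Your phrasing ``passing to a subsequence'' correctly signals the diagonal extraction over the countable family of compacta and $\epsilon$'s that the paper leaves implicit; otherwise the two arguments match step for step.
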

 \begin{proof}
 We shall show that for almost every   fiber
  of $\pi_i$, the map  $F_{i, p}$  and a subsequence  of
  $\{F^j_{i, p}\}_j$   satisfy the assumption of Lemma  \ref{inmeasure}
   and so  $F_{i, p}: (N_i/{N_{i-1}}, \bar d_{i})\ra (N_i/{N_{i-1}}, \bar d_{i})$  is conformal. By Theorem 4.1 of   \cite{CO15},   $F_{i, p}$   is a similarity.
     Since the limit of similarity maps  is a  similarity,  the same is true for every  fiber  of $\pi_i$.

 By  Theorem 1.2.12  and its proof  in \cite{CG90},
  the product of a  Haar measure on $N_i/{N_{i-1}}$  and an  invariant measure on $N/{N_i}$
    is an  invariant measure on $N/{N_{i-1}}$,
   in the following sense.   There is a smooth submanifold $Y_i$ of  $N/{N_{i-1}}$   and a measure $m'$ on
 $Y_i$   with the following properties:    (1) the (well-defined) map $P:  Y_i\times N_i/{N_{i-1}}\ra N/{N_{i-1}}$,
   $(yN_{i-1}, xN_{i-1})\mapsto yxN_{i-1}$, is a diffeomorphism and $P_*(m'\times m_{N_i/{N_{i-1}}})$ is an invariant measure on $N/{N_{i-1}}$, where $ m_{N_i/{N_{i-1}}}$   is a Haar measure on
     $N_i/{N_{i-1}}$; (2)     $(\pi_i\circ \iota)_*(m')$ is an
     invariant measure on $N/{N_i}$, where $\iota: Y_i\subset N/{N_{i-1}}$ is the inclusion.

 Let $B_n\subset Y_i$  ($n\ge 1$) be an exhausting sequence of compact subsets of $Y_i$,
  and $A_n\subset   N_i/{N_{i-1}}$  ($n\ge 1$)  be  a sequence of balls
  in  $ N_i/{N_{i-1}}$  centered at the origin $0$   with   radius  going to infinity as $n\ra \infty$.  Set $C_n=P(B_n\times A_n)\subset N/{N_{i-1}}$.
  For integers $n, k,j\ge 1$, let  $B_{n,k,j}=\{pN_{i-1} \in C_n| K(F^j_{i, p}, 0)\ge 1+1/k\}$  and
     $g_{n,k,j}=1_{B_{n,k,j}}$ be the characteristic function of $B_{n,k,j}$.
     Then  ${g_{n,k,j}}\in L^1(C_n)$.  By Fubini,  for a.e. $h\in B_n$, the function
      $g_{n,k,j}^h:  A_n\ra [0,1]$ given by $g_{n,k,j}^h(xN_{i-1})=g_{n,k,j}(P(h,xN_{i-1}))$
        is integrable,   $q_{n,k,j}(h):=\int_{A_n}g_{n,k,j}^h(xN_{i-1}) dm_{N_i/{N_{i-1}}}$ is an integrable function on $B_n$, and  $\int_{B_n}q_{n,k,j}(h)dm'(h)=\int_{C_n}g_{n,k,j}(pN_{i-1})dm_{N/N_{i-1}}=m_{N/N_{i-1}}(B_{n,k,j})$.
     By the assumption, for  fixed $n$ and $k$ we have $m_{N/N_{i-1}}(B_{n,k,j})\ra 0$  as $j\ra \infty$,
    which  implies
       $q_{n,k,j}\ra 0$ in $L^1(B_n)$  as $j\ra\infty$.
 This in turn implies  that  for fixed $n,k$   there is a null set $E_{n,k}\subset B_n$  and a subsequence  $\{q_{n,k,j_l}\}_l$  of  $\{q_{n,k,j}\}_j$
    such that
  $q_{n,k,j_l}(h)\ra 0$ as $l\ra \infty$ for  every  $h\in B_n\backslash E_{n,k}$.
 Set $E=\cup_{k,n} E_{n,k}$. Then $E$ is a null  set in $Y_i$.

 Let $h\in Y_i\backslash E$.   Let $A$ be a compact subset of   $N_i/{N_{i-1}}$ and $\epsilon>0$.   Pick $n,k$ sufficiently large such that $1/k<\epsilon$ and
 $h\in B_n$, $A\subset A_n$.
 By the definition of $E$ we have   $q_{n,k,j_l}(h)\ra 0$ as $l\ra \infty$.
    Due to $DF_{i, p}(xN_{i-1})=DF_{i, px}(0)$,
    we notice that
   \begin{align*}
   0\leftarrow q_{n,k,j_l}(h)
   &=m_{N_i/N_{i-1}}(\{x\in A_n|
    K(F^{j_l}_{i, hx},0)\ge 1+1/k\})\\
    &=m_{N_i/N_{i-1}}(\{x\in A_n|
    K(F^{j_l}_{i, h},x)\ge 1+1/k\})\\
&    \ge m_{N_i/N_{i-1}}(\{x\in A|
    K(F^{j_l}_{i, h},x)\ge 1+\epsilon\})
    \end{align*}
       as $l\ra \infty$  and so  the assumption of Lemma  \ref{inmeasure}  is satisfied by
    $F^{j_l}_{i,h}$, $l\ge 1$, $F_{i, h}$.

\end{proof}

Let $S=N \rtimes_D \R$ be the negatively curved homogeneous space associated with $(N,D)$.  
    As  $\G$ acts cocompactly on the space of distinct pairs of $N$,    every point of $N$ is a radial limit point.  Such an action induces a cobounded quasi-action on $S$ and vice versa.
Recall that   $\mu$ is measurable and hence approximately continuous  at a.e.
     $pN_{i-1}\in N/{N_{i-1}}$.

 \begin{theorem}\label{radialfoliated}  Assume   that  $\dim(N_i/{N_{i-1}})\ge 2$.
  Let $\G$ be a   uniform quasisimilarity group of $N$  and $\mu$ a $\G$-invariant   measurable fiber conformal structure
  on the fibration  $\pi_i: N/{N_{i-1}}\ra N/{N_i}$.
     Suppose there is a point $p\in N$ such that $\mu$ is approximately continuous
      at $pN_{i-1}$ and $p$ is  a radial limit point for $\G$.     Then there exists a
       Carnot-Caratheodory metric $\bar d_i$ on $N_i/{N_{i-1}}$
        and a quasisimilarity $F_0:N\ra N$
            such that $F_0GF_0^{-1}$ consists of elements $F$ where the
             associated $F_{i, p}:    (N_i/{N_{i-1}}, \bar d_i)  \ra (N_i/{N_{i-1}}, \bar d_i)  $  are similarities  for each $p \in N$.
        \end{theorem}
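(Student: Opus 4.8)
The plan is to run the fiber-bundle analogue of the proof of Theorem \ref{radial}, using Lemma \ref{inmeasure2} in place of Lemma \ref{inmeasure}. First I would normalize so that $p=0\cdot N_{i-1}$, i.e. $p=e$, the identity of $N$. Since $e$ is a radial limit point for $\G$ and $\G$ acts cocompactly on the space of distinct pairs of $N$, there is a sequence $\{h_j\}\subset\G$ pushing the center of a fixed triple in $T(\partial S)$ towards the point in $\partial S$ corresponding to $e$, along a bounded neighborhood of the vertical geodesic above $e$. Precomposing with the isometries $\tilde\delta_{s_j}$ of $S$ induced by the one-parameter automorphism group $e^{tD}$ (where $s_j\to+\infty$) produces a precompact family $F^j=e^{s_jD}\circ h_j$ of $(L,C)$-quasisimilarities of $N$; passing to a subsequence I may assume $F^j\to F$ uniformly on compact sets. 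I then set $F_0=F$ and aim to show $F_0\G F_0^{-1}$ is an $i$-similarity group for a suitable Carnot--Carathéodory metric $\bar d_i$ on $N_i/N_{i-1}$, which will be the one determined by $\mu(e N_{i-1})$ on the first layer $H_i$.

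Next, fix $g\in\G$ and write $\tilde g=F_0 g F_0^{-1}$, $\tilde g^j=F^j g (F^j)^{-1}$. Set $\mu_j=((F^j)^{-1})^*\mu$. Because $\mu$ is $\G$-invariant, each $\tilde g^j$ is conformal with respect to $\mu_j$, exactly as in the proof of Theorem \ref{radial}: $(\tilde g^j)^*\mu_j=\mu_j$. The crucial point is the formula $\mu_j=((e^{s_jD})^{-1})^*\mu$ (the $h_j$ part drops out by $\G$-invariance), and since the foliated Pansu differential of $e^{-s_jD}$ restricted to $H_i$ is a Euclidean scalar dilation, $\mu_j(qN_{i-1})=\mu(e^{-s_jD}(q)N_{i-1})$ pointwise. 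Now I invoke approximate continuity of $\mu$ at $eN_{i-1}$ together with $s_j\to\infty$: on any compact $S\subset N/N_{i-1}$ and for any $\epsilon>0$, outside a subset $A_j$ with $m_{N/N_{i-1}}(A_j)\to 0$ one has $d_X(\mu_j(x),\mu(eN_{i-1}))\le\epsilon$. Using Lemma \ref{invariantm} (the induced map on $N/N_{i-1}$ distorts the invariant measure by a bounded factor, since the $\tilde g^j$ are uniformly biLipschitz along fibers and on $N$), the sets $B_j=A_j\cup(\tilde g^j)^{-1}(A_j)$ still have measure tending to $0$. For $x\notin B_j$ the $\mu_j$-conformality of $\tilde g^j$ gives $d_X(\mu_j(x),(D\tilde g^j_{i,x}(0)|_{H_i})^T[\mu(eN_{i-1})])\le\epsilon$, hence $d_X(\mu(eN_{i-1}),(D\tilde g^j_{i,x}(0)|_{H_i})^T[\mu(eN_{i-1})])\le 2\epsilon$, and by the estimate \eqref{K(A)} relating displacement in $X$ to linear dilatation, $K(\tilde g^j_{i,x},0)\le 1+\phi(2\epsilon)$ off a set of measure $\to 0$. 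This is precisely the hypothesis of Lemma \ref{inmeasure2}, which yields that $\tilde g_{i,p}:(N_i/N_{i-1},\bar d_i)\to(N_i/N_{i-1},\bar d_i)$ is a similarity for every $p\in N$.

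The step I expect to be the main obstacle is the careful bookkeeping around the \emph{foliated} Pansu derivative and the measure-theoretic averaging on the total space $N/N_{i-1}$ rather than on a single Carnot group: one must check that $\mu_j$ is genuinely a measurable fiber conformal structure, that approximate continuity at the single base coset $eN_{i-1}$ propagates (after blowing up by $e^{-s_jD}$) to a statement about $m_{N/N_{i-1}}$-a.e.\ point of any compact fiber-saturated set, and that the precompactness argument for $\{F^j\}$ is legitimate in the quasisimilarity (rather than quasiconformal-of-$\hat N$) setting — here one uses that $(N,D)$ is a diagonal Heintze pair, so $S=N\rtimes_D\R$ is Gromov hyperbolic with $\partial S=\hat N$, $\infty$ is fixed by all quasisimilarities, and the relevant family of maps of $N$ is normal. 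Everything else is a faithful transcription of the Carnot-group argument, with Lemma \ref{inmeasure2} and Theorem 4.1 of \cite{CO15} (forcing the limiting fiber-conformal maps to be similarities) doing the work that Lemma \ref{inmeasure} and Markina's modulus-equals-capacity result did there.
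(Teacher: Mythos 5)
Your proposal is correct and follows essentially the same route as the paper's proof: normalize $p=e$, form the precompact family $F^j=e^{s_jD}\circ h_j$ from the radial-limit-point data, set $\mu_j=((F^j)^{-1})^*\mu=(e^{-s_jD})^*\mu$, use approximate continuity of $\mu$ at $eN_{i-1}$ together with Lemma \ref{invariantm} to get the measure-small exceptional sets $B_j$, convert the $\mu_j$-conformality of $\tilde g^j$ into the dilatation bound $K(\tilde g^j_{i,\cdot},0)\le 1+\phi(2\epsilon)$ off $B_j$ via \eqref{K(A)}, and conclude with Lemma \ref{inmeasure2}. The "obstacles'' you flag at the end (fiber conformal structure, blowing up approximate continuity along the fibration, precompactness of quasisimilarities via the Heintze group $S=N\rtimes_D\R$) are exactly the points the paper addresses, and your sketch handles them the same way.
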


     \begin{proof}
     We assume $p=0$ and use $e^{tD}$ to denote both the dilation on $N$ and the isometry of $S$ that translates along  $\{0\}\times \R$. Similarly elements of $\Gamma$ can be viewed as quasisimilarities of $N$ or quasi-isometries of $S$.
 Since $0$ is a radial limit  point there exists a sequence $\{ h_j\}_{j=1}^\infty$ in $\G$ and a sequence $s_j \to \infty$ such that for any point $x_0 \in S$  we have
    $h_j(x_0)\ra 0$ in  $\bar S=S\cup \partial S$  and    that
  $\{e^{s_jD}\circ h_j(x_0)\}_{j=1}^\infty$   is bounded and hence up to passing to a subsequence we have that
 $F_j=e^{s_jD}\circ h_j$ converges  to a quasisimilarity $F_0: N \to N$
  uniformly on compact subsets.

 We now show that for any $\g \in \G$, the map $F=F_0\g F_0^{-1}$ has the property that  for all $p\in N$ the associated maps $F_{i,p}$ are similarities of $N_i/{N_{i-1}}$ with respect to  the left invariant Carnot-Caratheodory metric   on  $N_i/{N_{i-1}}$
   determined by $\mu(0)$.

      Let  $\g\in \G$ and denote $\tilde\g=F_0\g F_0^{-1}$, $\tilde \g_j=F_j\g F_j^{-1}$.   Let  $\mu_j=(F_j^{-1})^*\mu$. 
                    Since $\mu$ is $\G$-invariant,      $\tilde \g_j$ is conformal with respect to $\mu_j$.
                         Additionally, $\mu_j=(e^{-s_j D})^*\mu$ so for $q\in N$  and $G=e^{-s_j D}$,
                          $$\mu_j(qN_{i-1})=G^*\mu (qN_{i-1})=(DG_{i,q}(0)|_{H_i})^T[\mu(G_{i-1}(qN_{i-1}))]
                          =\mu(G_{i-1}(qN_{i-1}))$$
            since  $DG_{i,q}(0)|_{H_i}$ is a constant   multiple of the
             identity map.

 Let $Z\subset N/{N_{i-1}}$ be a compact subset containing $N_{i-1}\in  N/{N_{i-1}}$.
              Since $\mu$ is approximately continuous at $N_{i-1}\in  N/{N_{i-1}}$  and $s_j\ra \infty$, the equality $\mu_j(qN_{i-1})=\mu(G_{i-1}(qN_{i-1}))$
               implies that    for any $\epsilon>0$    there are subsets $A_j\subset Z$
                with $m_{N/N_{i-1}}(A_j)\ra 0$ as $j\ra \infty$
                  and $d_X(\mu_j(qN_{i-1}), \mu(N_{i-1}))\le \epsilon$ for $qN_{i-1}\in   Z\backslash A_j$.

                   The maps $\tilde\g^{-1}_j$ and $\tilde\g^{-1}$ are all $(L,C)$-quasisimilarities for fixed $L$ and $C$ and so by  Lemma \ref{invariantm}
                   we have $m_{N/N_{i-1}}((\tilde \g_j)_{i-1}^{-1}(A_j))\ra 0$ as $j\ra \infty$.
                            Set $B_j=A_j\cup (\tilde\g_j)_{i-1}^{-1}(A_j)$.
                      Now we have $m_{N/N_{i-1}}(B_j)\ra 0$ as $j\ra \infty$ and
                       $d_X(\mu_j(qN_{i-1}), \mu(N_{i-1}))\le \epsilon$ and $d_X(\mu_j((\tilde\g_j)_{i-1}(qN_{i-1})), \mu(N_{i-1}))\le \epsilon$ for
                        $qN_{i-1}\in Z\backslash B_j$.

     Since  $\tilde\g_j$ is $\mu_j$-conformal, we have
      $\mu_j(qN_{i-1})=(D(\tilde\g_j)_{i,q}(0)|_{H_i})^T [\mu_j((\tilde\g_j)_{i-1}(qN_{i-1}))]$ for a.e. $qN_{i-1}\in N/{N_{i-1}}$.
      Now
   $$    d_X(\mu_j(qN_{i-1}), (D(\tilde\g_j)_{i,q}(0)|_{H_i})^T[\mu(N_{i-1})])
   =    d_X(\mu_j((\tilde\g_j)_{i-1}(qN_{i-1})), \mu(N_{i-1}))\le \epsilon    $$
    for a.e. $qN_{i-1}\in Z\backslash B_j$.
     Combining this with  $d_X(\mu_j(qN_{i-1}), \mu(N_{i-1}))\le \epsilon$ , we   get
      $$
        d_X(\mu(N_{i-1}),  (D(\tilde\g_j)_{i, q}(0)|_{H_i})^T[\mu(N_{i-1})])\le 2\epsilon$$
     for        a.e.  $qN_{i-1}\in Z\backslash B_j$.      By (\ref{K(A)})     this implies   that the assumption of   Lemma \ref{inmeasure2}
 is satisfied   by  $\tilde\g_j$ ($j\ge 1$),  $\tilde\g$
   and so  for any $p\in N$,    $\tilde\g_{i, p}$ is a similarity  of $N_i/{N_{i-1}}$ with respect to  the left invariant Carnot-Caratheodory metric determined by $\mu(N_{i-1})$.

     \end{proof}

     \noindent
     {\bf{Proof of Theorem \ref{foliatedtheorem}}}.
     We observe that we may assume the group $\Gamma$ is countable: Let $\Gamma_0$ be a countable  subgroup  of $\Gamma$ that is dense in the topology of uniform convergence on compact subsets; if  every element of some conjugate of $\Gamma_0$   is  a  $i$-fiber similarity  then the same is true for  $\Gamma$    as the limits of
  $i$-fiber similarity  maps  are $i$-fiber similarity  maps.

     The assumption implies that every point $p\in N$ is a radial limit point of $\Gamma$  and   so Theorem \ref{radialfoliated}
      can be applied.
      List the  elements of $I$ by $i_1<i_2<\cdots< i_k$.    We first conjugate
       $\Gamma$ to get a group  $\Gamma_1$  which is  a $i_1$-fiber  similarity group,
          then conjugate $\Gamma_1$ to get a group $\Gamma_2=F_0\Gamma_1F_0^{-1}$ which is  a $i_2$-fiber  similarity group,  and so on.  We can finish by induction once we  observe  that $\Gamma_2$ is also a $i_1$-fiber  similarity group.    For this we just notice that the conjugating map $F_0$ is the limit  of a sequence of   maps  $F_j$  which are compositions of elements of $\Gamma_1$ and the dilations $e^{tD}$  ($t\in \mathbb R$) of $N$.
Since both $\{e^{tD}| t\in \mathbb R\}$ and $\Gamma_1$ are
     $i_1$-fiber  similarity groups,    each  $F_j$  is a
      $i_1$-fiber  similarity  map. As a consequence, the limiting map $F_0$ is also a
      $i_1$-fiber  similarity  map. Hence $\Gamma_2=F_0\Gamma_1F_0^{-1}$ is     a $i_1$-fiber  similarity group.

 \qed

 \section{A counterexample}\label{example}

In this section we exhibit an example which shows that in general it is impossible to
 conjugate a uniform quasiconformal group
  of $\hat N$ into a conformal group with respect to an arbitrary   pre-specified  Carnot-Caratheodory metric on $N$ when that metric is not maximally symmetric.

Let $N$ be a Carnot  group and $d_{CC}$ a left invariant Carnot-Caratheodory metric on $N$.
   Let
   $IA(N, d_{CC})$   be the group   consisting of   graded automorphisms of $N$ that are also isometries with respect to $d_{CC}$.
     By the main result of \cite{CO15},
  the group  $Conf(N, d_{CC})$
    of conformal maps of $(N, d_{CC})$
    is  generated by left translations, standard  Carnot dilations and $IA(N, d_{CC})$.

\begin{lemma}\label{twometrics}
     Let $N$ be a Carnot group and $d_1, d_2$ two left invariant  Carnot-Caratheodory metrics on $N$.
      Suppose there is a quasiconformal map $f: (N, d_1)\ra (N, d_2)$ such that
         $f \cdot Conf(N, d_1)\cdot  f^{-1}\subset Conf(N, d_2)$, then
           there is a graded automorphism $h$ of $N$ such that
            $h\cdot IA(N, d_1) \cdot h^{-1}\subset IA(N, d_2)$.

\end{lemma}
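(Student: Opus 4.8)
The plan is to take $h$ to be a Pansu differential of $f$ at a generic point and to verify the conclusion by a blow-up argument. I will use three standard facts: by \cite{P89}, $f$ is Pansu differentiable almost everywhere with differential a graded automorphism of $N$; since $f$ is a bijection of $N$ it extends to a homeomorphism of $\hat N=N\cup\{\infty\}$ fixing $\infty$, as do all left translations $L_n$ and Carnot dilations $\delta_t$; and a uniform limit of conformal (i.e.\ $1$-quasiconformal) self-maps of $\hat N$ with respect to $d_2$ is again conformal with respect to $d_2$. Fix a point $x_0\in N$ at which $f$ is Pansu differentiable with $h:=Df(x_0)\in\text{Aut}_g(N)$, and set
$$g_t:=\delta_{1/t}\circ L_{f(x_0)^{-1}}\circ f\circ L_{x_0}\circ\delta_t .$$
Each $g_t$ is a $K$-quasiconformal self-homeomorphism of $\hat N$ fixing both $\infty$ and $e$ (the latter because $\delta_t(e)=e$), and $g_t\to h$ locally uniformly on $N$ as $t\to 0^+$. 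Being a uniformly $K$-quasiconformal family of self-maps of $\hat N$ fixing two points, $\{g_t\}$ is a normal family, so along a subsequence $g_t\to h$ uniformly on $\hat N$, and hence also $g_t^{-1}\to h^{-1}$ uniformly on $\hat N$.

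The key observation is that $g_t\circ\phi\circ g_t^{-1}\in\text{Conf}(N,d_2)$ for every $\phi\in IA(N,d_1)$ and every $t$. Indeed, a graded automorphism commutes with Carnot dilations, so $\delta_t\circ\phi\circ\delta_{1/t}=\phi$, and a short computation yields
$$g_t\circ\phi\circ g_t^{-1}=\big(\delta_{1/t}\circ L_{f(x_0)^{-1}}\big)\circ\Big(f\circ\big(L_{x_0}\circ\phi\circ L_{x_0}^{-1}\big)\circ f^{-1}\Big)\circ\big(\delta_{1/t}\circ L_{f(x_0)^{-1}}\big)^{-1}.$$
Now $L_{x_0}\circ\phi\circ L_{x_0}^{-1}\in\text{Conf}(N,d_1)$, since $\phi\in IA(N,d_1)\subset\text{Conf}(N,d_1)$ and $L_{x_0}\in\text{Conf}(N,d_1)$; hence $f\circ(L_{x_0}\phi L_{x_0}^{-1})\circ f^{-1}\in\text{Conf}(N,d_2)$ by hypothesis, and conjugating it by $\delta_{1/t}\circ L_{f(x_0)^{-1}}\in\text{Conf}(N,d_2)$ keeps it in $\text{Conf}(N,d_2)$. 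Letting $t\to 0^+$ along the chosen subsequence, $g_t\circ\phi\circ g_t^{-1}\to h\circ\phi\circ h^{-1}$ uniformly on $\hat N$; since these maps are conformal with respect to $d_2$, so is the limit, i.e.\ $h\circ\phi\circ h^{-1}\in\text{Conf}(N,d_2)$.

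To conclude, note that $h\phi h^{-1}$ fixes $e$, being a composite of automorphisms of $N$. By the result of \cite{CO15} recalled above, $\text{Conf}(N,d_2)=N\rtimes(\R_{>0}\times IA(N,d_2))$, whose point stabilizer at $e$ is exactly $\R_{>0}\times IA(N,d_2)$; hence $h\,IA(N,d_1)\,h^{-1}\subset\R_{>0}\times IA(N,d_2)$. Since $IA(N,d_1)$ is compact (a closed subgroup of the stabilizer of $e$ in $\text{Isom}(N,d_1)$), its conjugate $h\,IA(N,d_1)\,h^{-1}$ is a compact subgroup of $\R_{>0}\times IA(N,d_2)$; its image under the projection onto $\R_{>0}$ is a compact subgroup of $(\R_{>0},\times)$, hence trivial. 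Therefore $h\,IA(N,d_1)\,h^{-1}\subset IA(N,d_2)$, and $h$ is the required graded automorphism.

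The one step requiring genuine care is the blow-up limit: upgrading the local uniform convergence $g_t\to h$ on $N$ to uniform convergence on all of $\hat N$ (along a subsequence) through the normal-family property for uniformly quasiconformal maps fixing two points, deducing $g_t^{-1}\to h^{-1}$ from continuity of inversion on the homeomorphism group of the compact space $\hat N$, and then passing the conjugation $g_t\phi g_t^{-1}$ to the limit before invoking the "limit of conformal is conformal" principle. Everything else — the algebraic identity for $g_t\phi g_t^{-1}$, the stabilizer computation, and the compact-subgroup argument — is formal.
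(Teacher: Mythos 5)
Your proof takes the same essential approach as the paper: take $h$ to be a Pansu differential $Df(x_0)$ at a generic point and use a blow-up to transfer the conjugation statement from $f$ to $h$. The paper streamlines this by first normalizing $x_0=f(x_0)=0$ via left translations (which preserves the hypothesis, since left translations lie in both conformal groups), so that the blow-up maps are simply $F_t:=\delta_t f\delta_t^{-1}$, and then it first shows $\phi(g):=fgf^{-1}\in IA(N,d_2)$ (using that $\phi(g)$ fixes $0$ and that $\{g^i\}$ having compact closure kills the dilation factor). Since $g$ and $\phi(g)$ are then both graded, the identity $F_t\, g\, F_t^{-1}=\phi(g)$ holds \emph{exactly} for every $t$, and passing to the limit gives $hgh^{-1}=\phi(g)$ with no need for normal families, uniform convergence on $\hat N$, or a ``limit of conformal maps is conformal'' principle; one just reads $F_t g=\phi(g)F_t$ pointwise. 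Your version postpones the dilation-killing compactness step to the end and only establishes a containment $g_t\phi g_t^{-1}\in \text{Conf}(N,d_2)$ rather than an exact identity, which forces you to pass a genuine conjugation to the limit.

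One step as you wrote it is not quite right: a uniformly $K$-quasiconformal family of self-maps of $\hat N$ fixing two points is \emph{not} automatically a normal family --- the Carnot dilations $\{\delta_t\}_{t>0}$ fix $e$ and $\infty$, are $1$-quasiconformal, but degenerate. What rescues you is the extra information you already have, that $g_t\to h$ locally uniformly on $N$ with $h$ a homeomorphism: this supplies a third normalization point (pick any $p\neq e$ with $g_t(p)\to h(p)$ bounded away from $e$ and $\infty$), and three-point normalized uniformly $K$-quasiconformal families are compact. Alternatively, you can sidestep the issue entirely by arguing pointwise as in the paper: with $\psi:=L_{x_0}\phi L_{x_0}^{-1}$ and $\Psi:=f\psi f^{-1}\in\text{Conf}(N,d_2)$, rewrite the containment you proved as $g_t\circ\phi=\Psi_t\circ g_t$ for the conjugate $\Psi_t:=(\delta_{1/t}L_{f(x_0)^{-1}})\Psi(\delta_{1/t}L_{f(x_0)^{-1}})^{-1}$, and take $t\to0^+$ using only locally uniform convergence. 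The remaining steps --- the stabilizer computation via \cite{CO15} and the projection of the compact subgroup $h\,IA(N,d_1)\,h^{-1}$ to the trivial compact subgroup of $\R_{>0}$ --- are correct and are an acceptable substitute for the paper's earlier $\{g^i\}$-closure argument.
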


\begin{proof}  Denote by $\phi:  Conf(N, d_1)\ra  Conf(N, d_2)$ the injective homomorphism given by:
       $\phi(g_1)=f g_1 f^{-1}$. Let $p\in N$ be a point  such that    $Df(p)$  exists and  is a graded automorphism.
   After pre-composing and post-composing  with  left translations, we may assume $p=f(p)=0$ is the origin.
     Set $h=Df(0)$.

Let $g\in IA(N, d_1)$.  Then $g(0)=0$. It follows that $\phi(g)(0)=0$  and so  $\phi(g)$ is the composition of a standard  Carnot dilation   and
      an element of $IA(N, d_2)$.
     Since $\{g^i: i\in \mathbb N\}$ has compact closure, the same is true for  $\{\phi(g)^i: i\in \mathbb N\}$.    This follows from the fact that conjugation by $f$ is continuous. Alternatively one can argue using the fact that $f$ is quasisymmetric.
      This implies that $\phi(g)\in IA(N, d_2)$.    Being graded automorphisms, both $g$ and    $\phi(g)$ commute with  Carnot dilations.
         Now   for any $t>0$,
 $$\delta_t f\delta^{-1}_t       g  (\delta_t f\delta^{-1}_t )^{-1}=\delta_t f\delta^{-1}_t       g  \delta_t f^{-1}  \delta^{-1}_t
 =\delta_t f     g   f^{-1}  \delta^{-1}_t=\delta_t \phi(g)  \delta^{-1}_t=\phi(g).$$
   Since $\delta_t f \delta^{-1}_t$ converges uniformly on compact subsets to $h$ as $t\ra +\infty$, we have
      $h g  h^{-1}=\phi(g)\in IA(N, d_2)$.    Since this is true for every $g\in IA(N, d_1)$,  the lemma follows.

\end{proof}

 We note that
  for any two left invariant Carnot-Caratheodory metrics   $d_1, d_2$   on $N$,
      the group $Conf(N, d_1)$   is a uniform quasiconformal  group of  $(N, d_2)$.    Furthermore,
      the action of $Conf(N, d_1)$    on $ (N, d_2)$ satisfies the assumption of Theorem   \ref{tukia}.
         If   every uniform quasiconformal group of $\hat N$ satisfying the assumptions of Theorem \ref{tukia}
             can be conjugated into the conformal group  with respect to $d_2$,   then
              there is a quasiconformal map $f: \hat N\rightarrow \hat N$  such that for any
               $g\in Conf(N, d_1)$, the map $f g f^{-1}:  (N\backslash\{fg^{-1}f^{-1}(\infty)\}, d_2)\ra      (N\backslash\{fgf^{-1}(\infty)\}, d_2) $
                is conformal.
             Suppose  $N$ has the property that every quasiconformal map of $\hat N=N\cup \{\infty\}$ fixes $\infty$.  Then we have
             $(f|_N)\text{Conf}(N, d_1)(f|_N)^{-1}\subset \text{Conf}(N, d_2)$ and by Lemma \ref{twometrics} we conclude that
    $IA(N, d_1)$ injects into $IA(N, d_2)$.
         Next we exhibit an example where  such an injection does not exist.

  Let  $N=H\times H$ be the direct product of the first Heisenberg group with itself.
     Theorem 1.1 in   \cite{KMX20}  implies that    every quasiconformal map of $\hat N=N\cup \{\infty\}$ fixes $\infty$.
    The  Lie algebra of $N$ can be written as
 $\mathfrak n=\mathfrak h\oplus \mathfrak h=V_1\oplus V_2$  with  first layer $V_1=\R^2\oplus \R^2$, where the first $\R^2$ is the first layer of the first $\mathfrak h$ and the second $\R^2$ is the first layer of the   second
 $\mathfrak h$.
Let $e_1, e_2$ denote the standard basis in the first $\R^2$, and $\tilde e_1, \tilde e_2$ denote the standard basis in the second $\R^2$.
 We consider two different left invariant Carnot-Caratheodory metrics   $d_1$, $d_2$  on $N$.
The metric $d_1$ is determined by   the inner product    $<, >_1$   on  $V_1$   that has $e_1$, $e_2$, $\tilde e_1$, $\tilde e_2$ as an orthonormal basis.
The metric $d_2$ is determined by   the inner product  $<, >_2$
on  $V_1$   that has $e_1$, $e_2$, $\tilde e_1$, $\frac{\sqrt 2}{2}(\tilde e_2-e_1)$ as an orthonormal basis.

\begin{lemma}\label{isograded}
(1)
$IA(N, d_1)$ is isomorphic to
 $(O(2)\oplus O(2))\rtimes \Z_2$,  where  the generator of $\Z_2$     acts on  $O(2)\oplus O(2)$    by $(M_1, M_2)\ra (M_2, M_1)$;\newline
(2)
  $IA(N, d_2)$ is isomorphic to
   $(\Z_2\oplus \Z_2\oplus \Z_2)\rtimes \Z_2$,
where  the generator of $\Z_2$     acts on    $\Z_2\oplus \Z_2\oplus \Z_2$  by
  $(a,b,c)\ra (b,a,   c)$.
\end{lemma}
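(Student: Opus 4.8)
The plan is to reduce each group $IA(N,d_k)$ to a purely linear problem on the first layer $V_1=\R^2\oplus\R^2$ and then solve that problem for the two inner products. Write $P_1=\mathrm{span}\{e_1,e_2\}$, $P_2=\mathrm{span}\{\tilde e_1,\tilde e_2\}$ for the two $\R^2$ summands, and let $\omega_j$ be the symplectic form of the $j$-th Heisenberg factor extended by zero to $V_1$, so the Lie bracket $V_1\times V_1\to V_2$ is $(x,y)\mapsto(\omega_1(x,y),\omega_2(x,y))$ and $\omega_j$ has radical $P_{3-j}$.

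First I would describe $\mathrm{Aut}_g(N)$. Since $N$ is Carnot, a graded automorphism $\phi$ is determined by $A:=d\phi|_{V_1}$, and bracket-compatibility forces $\phi|_{V_2}=:B=(b_{pq})$ to satisfy $A^{*}\omega_1=b_{11}\omega_1+b_{21}\omega_2$ and $A^{*}\omega_2=b_{12}\omega_1+b_{22}\omega_2$. A nonzero combination $a\omega_1+b\omega_2$ has rank $2$ exactly when $a=0$ or $b=0$; since $A$ is invertible and $B$ nonsingular this forces $A$ either to preserve both $P_1$ and $P_2$ (with $A|_{P_1},A|_{P_2}$ then arbitrary in $GL(2,\R)$) or to interchange them, and conversely any such $A$ extends to a graded automorphism. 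A graded automorphism is a $d_{CC}$-isometry precisely when $A$ is orthogonal for the inner product defining $d_{CC}$ (it already preserves the horizontal distribution, and preserves horizontal lengths iff $A$ is a linear isometry of $V_1$). Hence $IA(N,d_k)$ is identified with the group $\mathcal G_k$ of $A\in O(V_1,\langle\cdot,\cdot\rangle_k)$ with $A\{P_1,P_2\}=\{P_1,P_2\}$, whose subgroup $\mathcal G_k^{0}$ preserving each of $P_1,P_2$ has index at most two; in each case below I shall exhibit an interchanging involution, which shows the index is exactly two, that the extension $1\to\mathcal G_k^{0}\to\mathcal G_k\to\Z_2\to1$ splits, and that conjugation by that involution is the action.

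For $d_1$ the planes $P_1,P_2$ are orthogonal, so $\mathcal G_1^{0}=O(P_1)\times O(P_2)\cong O(2)\times O(2)$; choosing the involution $\tau\colon e_i\leftrightarrow\tilde e_i$ as coset representative, conjugation by $\tau$ sends $(M_1,M_2)$ to $(M_2,M_1)$, giving part (1). For $d_2$ the key computation is the pair of principal angles between $P_1$ and $P_2$ for $\langle\cdot,\cdot\rangle_2$: with $w:=\frac{\sqrt2}{2}(\tilde e_2-e_1)$ one has $\tilde e_2=e_1+\sqrt2\,w$, the vectors $\tilde e_1,\frac1{\sqrt3}(e_1+\sqrt2\,w)$ form an orthonormal basis of $P_2$, and projecting onto $P_1$ shows the principal angles are $\pi/2$ (principal vectors $e_2\in P_1$, $\tilde e_1\in P_2$) and $\arccos(1/\sqrt3)$ (principal vectors $e_1\in P_1$, $\frac1{\sqrt3}(e_1+\sqrt2\,w)\in P_2$). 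Since the two angles differ, $V_1$ splits orthogonally as $W_1\oplus W_2$ with $W_1=\mathrm{span}\{e_1,w\}$, $W_2=\mathrm{span}\{e_2,\tilde e_1\}$, each $W_j=(P_1\cap W_j)\oplus(P_2\cap W_j)$, and every element of $\mathcal G_2$ must preserve each $W_j$ (it commutes with, or intertwines, the self-adjoint operators $\mathrm{proj}_{P_i}$ whose eigenlines are the principal directions). On $W_2$ the two lines $\R e_2,\R\tilde e_1$ are orthogonal, so the orthogonal symmetries of $W_2$ fixing them are $\{\mathrm{diag}(\pm1,\pm1)\}\cong\Z_2^{2}$; on $W_1$ the two lines are distinct but non-orthogonal, so the only orthogonal symmetries of $W_1$ fixing them are $\pm\mathrm{id}$, giving $\Z_2$. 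Thus $\mathcal G_2^{0}\cong\Z_2\times\Z_2^{2}\cong\Z_2^{3}$. Taking as interchanging involution the direct sum of the reflection of $W_1$ across the bisector of its two lines and the reflection $e_2\leftrightarrow\tilde e_1$ of $W_2$, conjugation by it fixes the central $\Z_2$ coming from $W_1$ and swaps the two $\Z_2$ factors coming from $W_2$; relabelling so that $(a,b)$ indexes the $W_2$-factors and $c$ the $W_1$-factor gives the action $(a,b,c)\mapsto(b,a,c)$, which is part (2).

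The step I expect to be the main obstacle is the $d_2$ case: one must locate the principal subspaces $W_1,W_2$ correctly and exploit the elementary but crucial fact that two distinct non-orthogonal lines in a Euclidean plane admit only $\pm\mathrm{id}$ as orthogonal symmetries — this collapse of the two copies of $GL(2,\R)$ to copies of $\Z_2$ is precisely what separates $d_2$ from $d_1$ — and then keep careful track of which factors the interchanging element permutes. Everything else is routine linear algebra.
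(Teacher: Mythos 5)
Your proof is correct and reaches the same conclusions, but the analysis of $d_2$ follows a genuinely different route from the paper's. Both arguments begin the same way: reduce $IA(N,d_k)$ to orthogonal maps of $V_1$ that preserve or swap the two planes $P_1,P_2$ (the paper characterizes $P_1\cup P_2$ as $\{X\in V_1\setminus\{0\}:\mathrm{rank}\,\mathrm{ad}(X)=1\}$, while you use non-degeneracy of linear combinations $a\omega_1+b\omega_2$ together with bracket-compatibility; these are the same observation packaged differently), and part (1) is identical. For part (2), however, the paper works directly with the orthogonal complements $E_1=P_1^\perp$ and $E_2=P_2^\perp$, intersects them with the planes to show that each of the four coordinate lines $\R e_1,\R e_2,\R\tilde e_1,\R\tilde e_2$ is preserved, and then extracts the sign constraint $\epsilon_3$ tying $e_1$ to $\tilde e_2$ from the fact that $\langle e_1,\tilde e_2\rangle_2\neq 0$. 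You instead compute the principal angles between $P_1$ and $P_2$, observe they are distinct ($\pi/2$ and $\arccos(1/\sqrt3)$), and use the resulting orthogonal splitting $V_1=W_1\oplus W_2$ into principal $2$-planes; the $\Z_2^3$ then falls out of the dichotomy between orthogonal versus non-orthogonal pairs of lines in a Euclidean plane. Your approach is more conceptual and makes transparent exactly why the two copies of $O(2)$ collapse to $\Z_2$'s, while the paper's is a more direct computation. One step you should tighten: the phrase ``commutes with, or intertwines, the self-adjoint operators $\mathrm{proj}_{P_i}$'' is loose — the clean statement is that any $A\in\mathcal G_2$ (preserving \emph{or} swapping $\{P_1,P_2\}$) commutes with the self-adjoint operator $\mathrm{proj}_{P_1}+\mathrm{proj}_{P_2}$, so preserves its eigenspaces; $W_2$ is the eigenvalue-$1$ eigenspace and $W_1$ its orthogonal complement, and within $W_1$ the two distinct remaining eigenvalues pin down the lines $\R e_1$ and $\R w$ as well. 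Also note a harmless index transposition: bracket compatibility gives $A^*\omega_1=b_{11}\omega_1+b_{12}\omega_2$ (not $b_{21}$), but this does not affect the conclusion that $B$ is diagonal or antidiagonal.
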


\begin{proof}  For any element $X$ of a Lie algebra $\mathfrak n$, let $r(X)$ be the rank of the linear map
 $$ad(X): \mathfrak n\ra \mathfrak n.$$
   Then  $r(X)=r(A(X))$ for any  $x\in \mathfrak n$ and  any   isomorphism $A$ of Lie algebras.
 Notice that     $r(x,y)\ge 1$ for every nonzero element
 $(x,y)\in   V_1=\R^2\oplus \R^2$,  and furthermore
 $r(x,y)= 1$   if and only if one of the following happens:\newline
  (a) $x=0$ and $y\not=0$; \newline
(b) $y=0$ and $x\not=0$. \newline
  It follows that for any graded isomorphism $A: \mathfrak  n\ra \mathfrak n$, we have
one of the following:\newline
(i) $A(\R^2\oplus \{0\})=\R^2\oplus \{0\}$  and
$A(\{0\}\oplus \R^2)=\{0\}\oplus \R^2$;\newline
(ii)
$A(\R^2\oplus \{0\})=\{0\}\oplus \R^2$  and
$A(\{0\}\oplus \R^2)=\R^2\oplus \{0\}$.

(1) follows easily from the above paragraph.   \newline
 (2)  Now   let  $A\in IA(N, d_2)$.
First assume  $A$ satisfies (i).
  Note the orthogonal complement of $\R^2\oplus \{0\}$ in
  $(V_1, <, >_2)$    is  $E_1:=\R\tilde e_1\oplus \R (\frac{\sqrt 2}{2}\cdot (\tilde e_2-e_1))$,
  and   similarly the orthogonal complement of  $\{0\}\oplus \R^2$
  is
$E_2:=\R e_2\oplus \R(e_1-\frac{1}{3} \tilde e_2)$.
 Since $A$ preserves orthogonal complement, we have  $A(E_1)=E_1$  and $A(E_2)=E_2$.
  Note  $E_1\cap (\{0\}\oplus \R^2)=\R \tilde e_1$  and  $E_2\cap  (\R^2\oplus \{0\})=\R e_2$.
   It follows that $A(\R \tilde e_1)=\R \tilde e_1$  and $A(\R e_2)=\R  e_2$.
Then $A$ also preserves the orthogonal complement of $\R \tilde e_1$  in
 $\{0\}\oplus \R^2$  and that of $\R  e_2$   in  $\R^2\oplus \{0\}$.  That is, we have
 $A(\R \tilde e_2)=\R \tilde e_2$  and $A(\R e_1)=\R  e_1$.
  From this it is easy to see that there are only  8  such isometric graded isomorphisms. They are given by
 $A(\tilde e_1)=\epsilon_1 \tilde e_1$,  $A(e_2)=\epsilon_2 e_2$,  $A(e_1)=\epsilon_3 e_1$
 and  $A(\tilde e_2)=\epsilon_3 \tilde e_2$,   where $\epsilon_1, \epsilon_2, \epsilon_3\in \{1, -1\}$.
  They form a group $F$ isomorphic to $\Z_2\oplus \Z_2\oplus \Z_2$.

 A similar argument shows that there are also 8   isometric graded isomorphisms   with respect to $d_2$
  that satisfy (ii).
  They are given by  $A(\tilde e_1)=\epsilon_1 e_2$,  $A(e_2)=\epsilon_2  \tilde e_1$,
 $A(e_1)=\frac{1}{\sqrt 3}\epsilon_3 \tilde e_2$  and $A(\tilde e_2)={\sqrt 3}\epsilon_3  e_1$,
where $\epsilon_1, \epsilon_2, \epsilon_3\in \{1, -1\}$.
  If we denote by $A_0$ the isometric   graded isomorphism  corresponding  to $\epsilon_1=\epsilon_2=\epsilon_3=1$, then
  these 8 isomorphisms are simply $A_0\cdot F$.   Now it is easy to see that
    (2) holds.

\end{proof}

\end{document}